\documentclass[a4paper,12pt]{article}
\usepackage{amsmath,amssymb,amsthm}
\usepackage{amscd}

\makeatletter

\@addtoreset{equation}{subsection}
\makeatother

\voffset=0mm
\headheight=0mm
\topmargin=0mm
\oddsidemargin=0mm
\evensidemargin=0mm
\textheight=240mm
\textwidth=162mm
\abovedisplayskip=0pt
\abovedisplayshortskip=0pt
\belowdisplayskip=0pt
\belowdisplayshortskip=0pt

\newtheorem{dfn}{Definition}[subsection]
\newtheorem{prop}[dfn]{Proposition}
\newtheorem{thm}[dfn]{Theorem}
\newtheorem{lem}[dfn]{Lemma}
\newtheorem{cor}[dfn]{Corollary}

%%%%%%%

\newtheorem{problem}[dfn]{Problem}

%%%%%%%%%
\newcommand\ad{{\rm ad}}

\newcommand\gfg{{\mathfrak{g}}}

\newcommand\Ker{{\rm Ker}}
\newcommand\pa{{\partial}}

%%%%%%%%%

\begin{document}

\title
{Surface topology and involutive bimodules}
\author{Nariya Kawazumi}
%\date{January 8, 2013}
\maketitle

\begin{abstract} We remark some basic facts on 
homological aspects of involutive Lie bialgebras and 
their involutive bimodules, and present some 
problems on surface topology related to these facts. 
\end{abstract}

\maketitle

\begin{center}
Introduction
\end{center}

The notion of a Lie bialgebra was originated by Drinfel'd 
in the celebrated paper \cite{D}. There he observed that any 
bialgebra structure on a fixed Lie algebra $\gfg$ is regarded 
as a $1$-cocycle of $\gfg$ with values in the second exterior power 
$\Lambda^2\gfg$, and that the coboundary of any element
in $\Lambda^2\gfg$ satisfying the Yang-Baxter equation
defines a Lie bialgebra 
structure on the Lie algebra $\gfg$. It can be regarded as 
a deformation of the Lie bialgebra structure on $\gfg$ with 
the trivial coalgebra structure. \par
It was Turaev \cite{T2} who discovered a close relation between 
surface topology and the notion of a Lie bialgebra. Let $S$ be a 
connected oriented surface, and $\mathbb{Q}\hat\pi(S)$ 
the (rational) Goldman Lie algebra of the surface $S$
\cite{Go}, which is the $\mathbb{Q}$-free vector space over the homotopy set 
$\hat\pi(S) = [S^1, S]$ of free loops on the surface $S$ equipped 
with the Goldman bracket. 
The constant loop $1$ is in the center of $\mathbb{Q}\hat\pi(S)$, so that 
the quotient $\mathbb{Q}\hat\pi'(S) :=  \mathbb{Q}\hat\pi(S)/\mathbb{Q}1$ 
has a natural Lie algebra structure. He introduced a natural cobracket, 
the Turaev cobracket, on $\mathbb{Q}\hat\pi'(S)$, and proved that 
it is a Lie bialgebra. Later Chas \cite{Chas} proved that it 
satisfies the involutivity. See Appendix for the definition of these operations. \par
On the other hand, Schedler \cite{Sch} introduced a natural involutive Lie bialgebra structure on the necklace Lie algebra associated to a quiver.  
Let $H$ be a symplectic $\mathbb{Q}$-vector space of dimension $2g$, 
$g \geq 1$, and $\widehat{T} := \prod^\infty_{m=0}H^{\otimes m}$ 
the completed tensor algebra over $H$. 
We denote by $\mathfrak{a}_g^- = {\rm Der}_\omega(\widehat{T})$ 
the Lie algebra of continuous derivations 
on $\widehat{T}$ annihilating the symplectic form $\omega \in H^{\otimes 2}$. 
It includes Kontsevich's ``associative" $a_g$ as a Lie subalgebra. 
The Lie algebra $\mathfrak{a}_g^-$ is the necklace Lie algebra 
associated to some quiver. Hence it is an involutive Lie bialgebra by Schedler's 
cobracket. Massuyeau \cite{Mas} introduced the notion of a symplectic 
expansion of 
the fundamental group of $\Sigma_{g,1}$, a compact connected oriented 
surface of genus $g$ with $1$ boundary component. 
Kuno and the author \cite{KK1} \cite{KK3} proved that 
a natural completion of the Lie algebra $\mathbb{Q}\hat\pi'(\Sigma_{g,1})$ is 
isomorphic to 
the Lie algebra $\mathfrak{a}_g^-$ by using a symplectic expansion. 
In particular, the Turaev cobracket defines an involutive 
Lie bialgebra structure on the Lie algebra $\mathfrak{a}_g^-$, which depends 
on the choice of a symplectic expansion, and does not coincide with 
Schedler's cobracket. In \S4 we present some problems related to these 
cobrackets. 
\par
Now we go back to an arbitrary connected oriented surface $S$. 
Suppose that its boundary $\partial S$ is non-empty.  
Then choose two (not necessarily distinct) points $*_0$ and $*_1$ 
in $\partial S$. 
We denote by $\Pi S(*_0, *_1)$ the homotopy set 
of paths from $*_0$ to $*_1$, namely $[([0,1], 0, 1), (S, *_0, *_1)]$. In \cite{KK1} and \cite{KK3} Kuno and 
the author discovered that $\mathbb{Q}\Pi S(*_0, *_1)$, the $\mathbb{Q}$-free 
vector space over the set $\Pi S(*_0, *_1)$, 
is a nontrivial $\mathbb{Q}\hat\pi'(S)$-module in a natural way. 
Moreover, inspired by \cite{T1}, they \cite{KK4} introduced a natural operation 
$$
\mu: \mathbb{Q}\Pi S(*_0, *_1) \to \mathbb{Q}\Pi S(*_0, *_1)\otimes \mathbb{Q}\hat\pi'(S).
$$
It should satisfy some natural properties analogous to the defining conditions 
of an involutive Lie bialgebra. So, in \cite{KK4}, 
they introduced the defining conditions 
of an involutive 
$\mathbb{Q}\hat\pi'(S)$-module, and proved that $\mu$ satisfies all the 
 conditions. See also Appendix for details.
As applications of the compatibility condition among them, 
they \cite{KK4} obtain a 
criterion for the non-realizability of generalized Dehn twists \cite{Ku2}, 
and a geometric constraint of the (geometric) Johnson homomorphism 
of the (smallest) Torelli group. \par
The purpose of the present paper is to explain a homological background 
of the defining conditions of an involutive Lie bialgebra and 
its involutive bimodule, 
and to present some problems on surface topology related to this background. 
Our key observation is the classical fact: {\it the Jacobi identity for 
a Lie algebra $\gfg$ is equivalent to the integrability condition 
$\pa\pa = 0$ on the exterior algebra $\Lambda^*\gfg$.} 
Throughout this paper we work over the rationals $\mathbb{Q}$
for simplicity. But all the propositions in this paper hold good over any 
field of characteristic $0$. Let $\gfg$ be a Lie algebra over $\mathbb{Q}$, 
$\pa: \Lambda^p\gfg \to \Lambda^{p-1}\gfg$, $p \geq 1$, the standard 
boundary operator. See, for example, \cite{CE}.  
Moreover let $\delta: \gfg \to \Lambda^2\gfg$ be 
a $\mathbb{Q}$-linear map.  
The map $\delta$ has a natural extension 
$d: \Lambda^p\gfg \to \Lambda^{p+1}\gfg$ for any $p \geq 0$. 
Then we have
\begin{prop}\label{0bialg}
The pair $(\gfg, \delta)$ is an involutive Lie bialgebra, 
if and only if $dd = 0$ and $d\pa + \pa d = 0$ on $\Lambda^*\gfg$. 
\end{prop}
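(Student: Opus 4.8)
The plan is to translate the two stated identities into the three defining conditions of an involutive Lie bialgebra: the co-Jacobi identity for $\delta$; the $1$-cocycle condition $\delta([x,y])=\ad_x\delta(y)-\ad_y\delta(x)$, with $\ad$ acting diagonally on $\Lambda^2\gfg$; and the involutivity $[\,,\,]\circ\delta=0$, where $[\,,\,]\colon\Lambda^2\gfg\to\gfg$ is the bracket. Here $d$ is the degree $+1$ derivation of $\Lambda^*\gfg$ determined by $d|_\gfg=\delta$ and $d(1)=0$ -- the natural extension of $\delta$ -- and $\pa$ is the Chevalley--Eilenberg boundary. Both $d$ and $\pa$ are odd, so $dd=\tfrac12[d,d]$ and $d\pa+\pa d=[d,\pa]$ are graded (anti)commutators, of degrees $+2$ and $0$. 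I would prove the two equivalences $dd=0\Leftrightarrow$ co-Jacobi and $d\pa+\pa d=0\Leftrightarrow$ (cocycle and involutivity) independently.

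For the first: the graded self-commutator of an odd derivation is again a derivation, and a derivation of $\Lambda^*\gfg$ that vanishes on the generators $\gfg$ (and trivially on $\mathbb{Q}=\Lambda^0\gfg$) vanishes identically. Unwinding the definition of $d$ shows that for $x\in\gfg$ the element $dd(x)\in\Lambda^3\gfg$ is twice the image of $(\delta\otimes\mathrm{id})\circ\delta(x)$ in $\Lambda^3\gfg$. If co-Jacobi is taken in its $\Lambda^3$-form this is literally the condition $dd(x)=0$; in the ``cyclic-sum'' form one uses the elementary fact that on $\Lambda^2\gfg\otimes\gfg\subset\gfg^{\otimes3}$ the cyclic-symmetrisation map and the projection onto $\Lambda^3\gfg$ have the same kernel (summing the $C_3$-action over the $2$-dimensional irreducible $S_3$-representation is $0$). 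This is exactly the dual, applied to $\gfg^*$, of the quoted equivalence ``Jacobi $\Leftrightarrow\pa\pa=0$''.

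For the second, put $L:=d\pa+\pa d$, of degree $0$. Since $\pa x=0$ for $x\in\gfg$, we get $L(x)=\pa(\delta(x))=[\,,\,]\circ\delta(x)$, so $L|_\gfg=0$ is precisely involutivity. A direct computation with the explicit formulas for $d$ and $\pa$ gives, for $x,y\in\gfg$,
\[
L(x\wedge y)=\bigl(\delta([x,y])-\ad_x\delta(y)+\ad_y\delta(x)\bigr)+\bigl([\,,\,]\delta(x)\bigr)\wedge y+x\wedge\bigl([\,,\,]\delta(y)\bigr),
\]
so that, modulo involutivity, vanishing of $L$ on $\Lambda^2\gfg$ is the cocycle condition. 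This already gives the ``only if'' direction: if $dd=0$ and $L=0$ throughout $\Lambda^*\gfg$, restricting to degrees $\le2$ produces all three conditions.

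The part I expect to be hardest is the ``if'' direction, where one must propagate $L=0$ from degrees $\le2$ to all of $\Lambda^*\gfg$, the difficulty being that $\pa$ is not a derivation. I would invoke the standard fact that the defect $\beta(a,b):=\pa(a\wedge b)-(\pa a)\wedge b-(-1)^{|a|}a\wedge(\pa b)$ is a biderivation of $\Lambda^*\gfg$ -- the Schouten bracket extending the Lie bracket of $\gfg$. Feeding this into the Leibniz expansion of $L(a\wedge b)$, the pure derivation terms cancel in pairs, leaving $L(a\wedge b)=(La)\wedge b+a\wedge(Lb)+D(a,b)$ with $D(a,b):=d\beta(a,b)+\beta(da,b)+(-1)^{|a|}\beta(a,db)$; one checks that $D$ is again a biderivation and that $D(x,y)=\delta([x,y])-\ad_x\delta(y)+\ad_y\delta(x)$ for $x,y\in\gfg$. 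Hence, granting the cocycle condition, $D$ vanishes on $\gfg\times\gfg$ and so identically, $L$ becomes a derivation, and $L|_\gfg=0$ (involutivity) forces $L=0$ on $\Lambda^*\gfg$; together with co-Jacobi $\Rightarrow dd=0$ this completes the proof. (One could instead bypass the Schouten bracket and prove $L=0$ on $\Lambda^p\gfg$ by induction on $p$ using only the derivation property of $d$ and the recursion defining $\pa$; the cases $p\le2$ stay the substantive ones.)
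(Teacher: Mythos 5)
Your proposal is correct, and it reaches the paper's three low-degree conditions (coJacobi, compatibility, involutivity) by the same low-degree computations, but it organizes the propagation to all of $\Lambda^*\gfg$ differently. The paper proves everything by explicit induction on the exterior degree: Lemma \ref{A2dd=0} handles $dd=0$ using the derivation property (\ref{A2wedge}) of $d$, and for $d\pa+\pa d$ it first establishes the technical identity of Lemma \ref{A3partial} (a consequence of the compatibility condition together with Lemma \ref{A1wedge}) and then runs the induction of Proposition \ref{A3dd+dd}, which expresses $(d\pa+\pa d)(X_1\wedge\cdots\wedge X_p)$ as the sum of insertions of $\pa dX_i$. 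You instead invoke the structural principle that a derivation, or a biderivation, vanishing on the generators $\gfg$ vanishes identically: $dd=\tfrac12[d,d]$ is a derivation, and $L=d\pa+\pa d$ becomes a derivation once the defect $D(a,b)=d\beta(a,b)+\beta(da,b)+(-1)^{|a|}\beta(a,db)$ built from the Schouten-bracket biderivation $\beta$ (which is exactly the right-hand side of Lemma \ref{A1wedge}) is shown to vanish, which is the cocycle condition on generators. I checked the Leibniz cancellation and the biderivation property of $D$; they work, and your route in fact recovers the explicit formula of Proposition \ref{A3dd+dd}, since a derivation is determined by its values on generators. What your approach buys is the conceptual Gerstenhaber/BV picture ($\delta$ compatible iff $d$ is a derivation of the Schouten bracket, involutivity iff $d$ anticommutes with the generating operator $\pa$); what the paper's inductions buy is a self-contained elementary argument, with Lemma \ref{A3partial} reused verbatim in the bimodule case of \S2. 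The two deferred verifications in your sketch (that $D$ is a biderivation and that $D(X,Y)$ is the cocycle defect, with your observation that the cyclic symmetrizer and the antisymmetrizer have the same kernel on $\Lambda^2\gfg\otimes\gfg$) are routine and correct.
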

This is an easy exercise. But, to complete our argument, we prove it 
in \S1. The proposition implies the homology group $H_*(\gfg)$ 
of the Lie algebra $\gfg$ is a cochain complex with the coboundary 
operator $d(\delta) := H_*(d)$, if $\gfg$ is an involutive Lie bialgebra. 
\begin{problem}
Find a meaning 
of the cohomology group $H^*(H_*(\gfg), d(\delta))$ for any involutive Lie bialgebra $(\gfg, \delta)$. 
\end{problem}
\par
Suppose $\gfg$ is an involutive Lie bialgebra. Let $M$ be a $\gfg$-module. 
Then we can consider the standard chain complex $(M\otimes\Lambda^*\gfg, 
\pa)$ of the Lie algebra $\gfg$ with values in $M$ \cite{CE}. 
Any $\mathbb{Q}$-linear map $\mu: M \to M\otimes \gfg$ has a natural 
extension $d = d^M: M\otimes \Lambda^p\gfg \to M\otimes \Lambda^{p+1}\gfg$ 
for any $p \geq 0$. Then we have 
\begin{prop}\label{0bimod}
 The pair $(M, \mu)$ is an involutive $\gfg$-bimodule in the sense of 
\cite{KK4}, if and only if $dd = 0$ and $d\pa + \pa d = 0$ on $M\otimes\Lambda^*\gfg$. 
\end{prop}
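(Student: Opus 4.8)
The plan is to run, with the coefficient module $M$ carried along, the argument by which Proposition \ref{0bialg} is proved in \S1. First I would spell out the natural extension $d^M$: writing $\mu(m)=\sum m'\otimes x'$ and using the extension $d$ of $\delta$ to $\Lambda^*\gfg$, one has
\[
d^M\bigl(m\otimes y_1\wedge\cdots\wedge y_p\bigr)
=\sum m'\otimes x'\wedge y_1\wedge\cdots\wedge y_p
\;+\;m\otimes d\bigl(y_1\wedge\cdots\wedge y_p\bigr).
\]
A direct check shows that $d^M$ satisfies the Leibniz-type rule $d^M(u\wedge\xi)=d^M(u)\wedge\xi+(-1)^{|u|}u\wedge d\xi$ over the derivation $d$, for $u\in M\otimes\Lambda^*\gfg$ and $\xi\in\Lambda^*\gfg$ (with the natural right $\Lambda^*\gfg$-module structure on $M\otimes\Lambda^*\gfg$); consequently $(d^M)^2$ satisfies $(d^M)^2(u\wedge\xi)=(d^M)^2(u)\wedge\xi+u\wedge(dd)\xi$. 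I would also recall that, for the standard chain complex $(M\otimes\Lambda^*\gfg,\pa)$, one has $\pa|_{M\otimes\Lambda^0\gfg}=0$ while $\pa|_{M\otimes\Lambda^1\gfg}\colon M\otimes\gfg\to M$ is the $\gfg$-action (up to sign), and that $\pa$ obeys a Cartan-type identity relating $\pa(x\wedge\,\cdot\,)+x\wedge\pa(\,\cdot\,)$ to the $\gfg$-action for $x\in\gfg$. Finally I would write out the defining conditions of an involutive $\gfg$-bimodule from \cite{KK4}: that $M$ is a $\gfg$-module (assumed, and equivalent to $\pa\pa=0$ on $M\otimes\Lambda^*\gfg$), a co-Leibniz identity for $\mu$ relative to $\delta$, a compatibility (cocycle) identity linking $\mu$ to the action and to $\delta$, and an involutivity identity asserting that the composite $M\xrightarrow{\ \mu\ }M\otimes\gfg\to M$ vanishes.

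For the ``only if'' direction I would evaluate the two operators in low exterior degree. On $M\otimes\Lambda^0\gfg=M$ we have $d^M|_M=\mu$ and $\pa|_M=0$, so $(d^Md^M)|_M=d^M\mu$ and $(d^M\pa+\pa d^M)|_M=\pa\mu$; expanding $d^M\mu(m)=\sum\mu(m')\wedge x'+\sum m'\otimes\delta(x')\in M\otimes\Lambda^2\gfg$ one reads off exactly the co-Leibniz identity, and the vanishing of $\pa\mu=(\text{action})\circ\mu$ is exactly the involutivity identity. On $M\otimes\Lambda^1\gfg=M\otimes\gfg$ we have $d^M(m\otimes x)=\mu(m)\wedge x+m\otimes\delta(x)$ and $\pa(m\otimes x)=x\cdot m$; computing $(d^M\pa+\pa d^M)(m\otimes x)$ and comparing it with $\mu(x\cdot m)$ reproduces exactly the compatibility identity. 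Thus all three defining conditions follow, and each is pinned to a definite low-degree component of the two global identities.

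For the ``if'' direction I would show the two global identities are forced by their low-degree cases together with the identities that Proposition \ref{0bialg} supplies for the involutive Lie bialgebra $\gfg$. For $(d^M)^2$ this is immediate: $M\otimes\Lambda^*\gfg$ is generated over $\Lambda^*\gfg$ by $M=M\otimes\Lambda^0\gfg$ (since $m\otimes\xi=(m\otimes1)\wedge\xi$), and since $dd=0$ on $\Lambda^*\gfg$ by Proposition \ref{0bialg}, the identity $(d^M)^2(u\wedge\xi)=(d^M)^2(u)\wedge\xi+u\wedge(dd)\xi$ forces $(d^M)^2$ to be determined by $(d^M)^2|_M$, so its vanishing there (the co-Leibniz identity) propagates to all of $M\otimes\Lambda^*\gfg$. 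For $D:=d^M\pa+\pa d^M$ I would induct on exterior degree: combining the Leibniz-type rule for $d^M$ with the Cartan-type identity for $\pa$ gives an expansion
\[
D(u\wedge x)=\pm\,D(u)\wedge x\;+\;\bigl(\text{correction terms built from }u,\ \delta(x)\text{ and the }\gfg\text{-action}\bigr),
\]
whose correction terms, once unwound, are images under wedge multiplication and contraction of the $\Lambda^*\gfg$-level expressions $(dd)(\,\cdot\,)$ and $(d\pa+\pa d)(\,\cdot\,)$, hence vanish by Proposition \ref{0bialg}; so $D$ vanishing on $M\otimes\Lambda^{\le1}\gfg$ (that is, the involutivity and compatibility identities) forces $D=0$ throughout. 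Together with the ``only if'' part this establishes the equivalence.

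The step I expect to be the main obstacle is this last induction for $D=d^M\pa+\pa d^M$. Unlike $d^M$, the boundary $\pa$ is not a wedge derivation but a second-order operator, so there is no one-line reason that $D=0$ should be detected in exterior degrees $\le1$; one must expand $\pa\bigl((m\otimes1)\wedge\xi\bigr)$ carefully, keep track of all adjoint-action correction terms, and recognise them at the right moment as the vanishing $\Lambda^*\gfg$-level expressions of Proposition \ref{0bialg}. If one prefers to avoid the general expansion, the induction step can be carried out by a direct, if lengthy, computation on a generator $m\otimes y_0\wedge\cdots\wedge y_p$.
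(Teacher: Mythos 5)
Your proposal follows essentially the same route as the paper: identify the comodule, compatibility and involutivity conditions with the vanishing of $d^Md^M$ and $d^M\pa^M+\pa^Md^M$ in exterior degrees $\le 1$, then propagate upward by induction on the exterior degree using the Leibniz rule for $d^M$ and the Cartan-type identity for $\pa^M$ (this is exactly the content of Lemmas \ref{A6dd=0} and \ref{A7axiom} and Proposition \ref{A7dd+dd}). One small correction to your account of the induction for $D=d^M\pa^M+\pa^Md^M$: the correction terms in $D(u\wedge Y)$ are not all $\Lambda^*\gfg$-level expressions killed by Proposition \ref{0bialg} --- one of them, $\bigl(\sigma(Y)(d^Mm)-d^M(Ym)+\Gamma_\sigma(m\otimes dY)\bigr)\wedge\xi$, is precisely the bimodule compatibility identity (\ref{A6compatible}) wedged with $\xi$, so the degree-$1$ hypothesis is re-used \emph{inside} the inductive step (alongside Lemma \ref{A3partial}, which handles the genuinely $\gfg$-level remainder and reduces it to $m\otimes\xi\wedge\pa dY$), and not merely as a base case.
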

Similarly to $H_*(\gfg)$, the homology group $H_*(\gfg; M)$ of $\gfg$ with values in $M$ admits the coboundary operator $d(\delta, \mu) := H_*(d)$
 if $M$ is an involutive $\gfg$-bimodule. 
\begin{problem} Let $(\gfg, \delta)$ be an involutive Lie bialgebra. Then 
find a meaning 
of the cohomology group $H^*(H_*(\gfg; M), d(\delta, \mu))$
for any involutive $\gfg$-bimodule $(M, \mu)$. 
\end{problem}
\par
In \S3 we study Drinfel'd's deformation of a Lie bialgebra structure 
by a $1$-coboundary stated above. 
We can consider an analogous deformation of 
an involutive bimodule. We prove that such a deformation does not 
affect the coboundary operators $d(\delta)$ and $d(\delta, \mu)$ 
on $H_*(\gfg)$ and $H_*(\gfg; M)$
(Lemma \ref{A4coboundary} and Proposition \ref{A8Aexp}). 
In \S4 we discuss some relation among these homological facts and surface topology, in particular, a tensorial description of the Turaev cobracket and Kontsevich's non-commutative symplectic geometry. 
In Appendix we briefly review some operations of loops on a surface 
\cite{Go} \cite{T2} \cite{KK1} \cite{KK4}. 
\par
We conclude the introduction by listing our convention of notation in this paper. 
For a $\mathbb{Q}$-vector space $V$ and $p \geq 1$, the $p$-th 
symmetric group $\mathfrak{S}_p$ acts on the tensor space 
$V^{\otimes p}$ by permuting the components. 
In particular, we denote $T:= (12) \in \operatorname{Aut}(V^{\otimes 2})$
and $N := 1 + (123) + (123)^2 \in \operatorname{End}(V^{\otimes 3})$. 
We regard the $p$-th exterior power $\Lambda^pV$ as a linear subspace 
of $V^{\otimes p}$ in an obvious way
$
\Lambda^pV := \{u \in V^{\otimes p}; \sigma(u) =
(\operatorname{sgn}\sigma)u\}.
$
For $X_i \in V$, $1 \leq i \leq p$, we identify 
$
X_1\wedge\cdots\wedge X_p = \sum_{\sigma\in \mathfrak{S}_p}
(\operatorname{sgn}\sigma)X_{\sigma(1)}\cdots X_{\sigma(p)} 
\in \Lambda^pV \subset V^{\otimes p}.
$
Here and throughout this paper we omit the symbol $\otimes$,
if there is no fear of confusion.
In particular, we have 
\begin{equation}
\wedge = (1-T): V^{\otimes 2} \to \Lambda^2V, \quad
XY \mapsto X\wedge Y = (1-T)(XY).
\label{A0wedge}
\end{equation}

\medskip
\noindent \textbf{Acknowledgments.}
First of all, the author thanks Yusuke Kuno for lots of valuable discussions
and his comments for the first draft of this paper. 
This paper is a byproduct of our joint paper \cite{KK4}. 
He also thanks Atsushi Matsuo, Robert Penner and especially Gwenael 
Massuyeau for lots of helpful conversations.
\par

\tableofcontents

\section{Lie bialgebras} 

In this section we recall the definitions of a Lie algebra, a Lie coalgebra and a Lie bialgebra, and prove Proposition \ref{0bialg}. 

\subsection{Lie algebras}\label{LA}

Let $\gfg$ be a $\mathbb{Q}$-vector space equipped with a $\mathbb{Q}$-linear map
$\nabla: \gfg\otimes\gfg \to \gfg$ satisfying {\bf the skew condition} 
\begin{equation}
\nabla T = -\nabla: \gfg^{\otimes 2} \to \gfg.
\label{A1skew}
\end{equation}
Following the ordinary terminology, we denote $[X, Y] := \nabla(X\otimes Y)$ for any $X$ and $Y \in \gfg$. 
Then we define $\mathbb{Q}$-linear maps 
$\sigma: \gfg\otimes\Lambda^p\gfg \to \Lambda^p\gfg$ and 
$\pa: \Lambda^{p}\gfg \to \Lambda^{p-1}\gfg$ by 
\begin{eqnarray*}
\sigma(Y)(X_1\wedge\cdots\wedge X_p) 
&:=& \sum^p_{i=1} X_1\wedge\cdots\wedge X_{i-1}\wedge [Y, X_i]\wedge
X_{i+1}\wedge\cdots\wedge X_p,\\
\pa(X_1\wedge\cdots\wedge X_p)
&:=& \sum_{i<j}(-1)^{i+j} [X_i, X_j]\wedge X_1\wedge \overset{\hat
i}\cdots \overset{\hat j}\cdots\wedge X_p,
\end{eqnarray*}
for $X_i$ and $Y \in \gfg$. 
It is easy to show
\begin{equation}
\pa(X_1\wedge\cdots\wedge X_p\wedge Y) = 
\pa(X_1\wedge\cdots\wedge X_p)\wedge Y 
+ (-1)^{p+1}\sigma(Y)(X_1\wedge\cdots\wedge X_p).
\label{A1partial}
\end{equation}

\begin{lem}\label{A1dd=0}
We have $\pa\pa=0: \Lambda^*\gfg \to \Lambda^*\gfg$, if and only if
$\nabla$ satisfies {\bf the Jacobi identity}
\begin{equation}
\nabla(\nabla\otimes 1)N = 0: \gfg^{\otimes 3} \to \gfg.
\label{A1Jacobi}
\end{equation}
\end{lem}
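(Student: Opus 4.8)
The plan is to prove $\partial\partial = 0$ on $\Lambda^*\gfg$ is equivalent to the Jacobi identity by a direct computation, reducing everything to the degree where the phenomenon first appears. Since $\partial: \Lambda^p\gfg \to \Lambda^{p-1}\gfg$ is built from $\nabla$ applied to one pair of factors at a time, the composite $\partial\partial$ is a sum of terms each involving $\nabla$ twice. The key reduction is that the identity $\partial\partial = 0$ in all degrees is detected by its lowest nontrivial instance $\partial\partial: \Lambda^3\gfg \to \Lambda^1\gfg$, together with a Leibniz-type propagation argument using \eqref{A1partial}.

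First I would establish the easy direction. Assume $\partial\partial = 0$. Evaluate on $X_1\wedge X_2\wedge X_3$ for arbitrary $X_1, X_2, X_3 \in \gfg$. Computing $\partial(X_1\wedge X_2\wedge X_3) = [X_1,X_2]\wedge X_3 - [X_1,X_3]\wedge X_2 + [X_2,X_3]\wedge X_1$, then applying $\partial$ once more and collecting terms, one obtains (up to an overall nonzero scalar coming from the wedge normalization) exactly $[[X_1,X_2],X_3] + [[X_2,X_3],X_1] + [[X_3,X_1],X_2]$, i.e. the left-hand side of the Jacobi identity \eqref{A1Jacobi} applied to $X_1 X_2 X_3$ via the cyclic operator $N$. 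Hence $\partial\partial=0$ forces \eqref{A1Jacobi}.

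For the converse, assume $\nabla(\nabla\otimes 1)N = 0$. I would argue by induction on $p$ that $\partial\partial = 0$ on $\Lambda^p\gfg$. The base cases $p = 0, 1, 2$ are immediate (the target is $0$ or the computation is trivial), and $p = 3$ is precisely the degree-$3$ computation above run in reverse: the Jacobi identity makes the sum vanish. For the inductive step, write a general element as $\omega\wedge Y$ with $\omega \in \Lambda^{p-1}\gfg$ and use \eqref{A1partial} twice to expand $\partial\partial(\omega\wedge Y)$. This produces $\partial\partial(\omega)\wedge Y$, which vanishes by induction, plus cross terms involving $\partial\sigma(Y)$, $\sigma(Y)\partial$, and $\sigma(Y)\sigma(Y)$-type expressions. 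The terms combine using the compatibility of $\partial$ with $\sigma(Y)$ — concretely, that $\partial\sigma(Y) + \sigma(Y)\partial$ behaves correctly (the operator $\sigma$ is a chain map up to the expected sign, which itself follows from \eqref{A1partial}) — so that everything reduces to the already-known degree-$3$ case applied inside $\omega$.

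The main obstacle is bookkeeping: keeping track of signs in \eqref{A1partial} across the double iteration, and making sure the cross terms in the inductive step cancel in pairs rather than merely by invoking Jacobi blindly. The cleanest way to handle this is to first record the auxiliary identity $\partial\,\sigma(Y) = \sigma(Y)\,\partial$ on $\Lambda^*\gfg$ (that $\sigma(Y)$ commutes with $\partial$ for each fixed $Y$), which follows from \eqref{A1partial} and the skew condition \eqref{A1skew}, and then feed it into the expansion of $\partial\partial(\omega\wedge Y)$. With that lemma in hand the inductive step collapses to $\partial\partial(\omega)\wedge Y + (\text{degree-}3\text{ Jacobi terms})\wedge(\text{rest}) = 0$, completing the proof.
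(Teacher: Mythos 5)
Your overall strategy coincides with the paper's: the forward direction by computing $\pa\pa$ on $\Lambda^3\gfg$ to recover the cyclic sum $[[X,Y],Z]+[[Y,Z],X]+[[Z,X],Y]$, and the converse by induction on $p$ using (\ref{A1partial}) together with the commutation of $\sigma(Y)$ with $\pa$. There is, however, one genuine error in how you source the key auxiliary identity. You assert that $\pa\,\sigma(Y)=\sigma(Y)\,\pa$ on $\Lambda^*\gfg$ ``follows from (\ref{A1partial}) and the skew condition (\ref{A1skew})''. It does not: when you push $\sigma(Y)$ past the bracket $[X_i,X_j]$ produced by $\pa$, the matching of terms requires $[Y,[X_i,X_j]]=[[Y,X_i],X_j]+[X_i,[Y,X_j]]$, i.e.\ that $\ad(Y)$ is a derivation of $\nabla$ --- which is the Jacobi identity itself. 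If the commutation held for an arbitrary skew bracket, your induction would prove $\pa\pa=0$ unconditionally, contradicting the ``only if'' half of the very lemma you are proving. The paper derives this commutation (its identity (\ref{A1action})) from the Jacobi identity, which is indeed available at that point of your argument, so the proof is repairable by changing the justification; but as written it conceals exactly the place where the hypothesis (\ref{A1Jacobi}) enters the converse direction.

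Two smaller inaccuracies in the inductive step: expanding $\pa\pa(\xi\wedge Y)$ via (\ref{A1partial}) yields $(\pa\pa\xi)\wedge Y+(-1)^{p}\sigma(Y)\pa\xi+(-1)^{p+1}\pa\sigma(Y)\xi$ --- the term $\sigma(Y)\xi$ lies in $\Lambda^{p}\gfg$ and is hit by $\pa$ directly, so no ``$\sigma(Y)\sigma(Y)$-type'' cross terms arise; and once the commutation identity is in hand the last two terms cancel outright, so the step closes without any further appeal to the degree-$3$ case ``inside $\omega$''.
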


\begin{proof}
For $X$, $Y$ and $Z \in \gfg$, we have 
$$
\pa\pa(X\wedge Y\wedge Z) = 
[[X, Y], Z] + [[Y, Z], X] + [[Z, X], Y]. 
$$
Hence $\pa\pa=0$ implies the Jacobi identity. \par
Assume the Jacobi identity. Then, by some straight-forward computation, 
we have 
\begin{equation}
\sigma(Y)\pa(X_1\wedge\cdots\wedge X_p) = 
\pa\sigma(Y)(X_1\wedge\cdots\wedge X_p)
\label{A1action}
\end{equation}
for any $X_i$ and $Y \in \gfg$. This proves $\pa\pa = 0: 
\Lambda^p\gfg \to \Lambda^{p-2}\gfg$ by induction on $p \geq 2$. 
In the case $p=2$, $\pa\pa = 0$ is trivial.
Assume $\pa\pa = 0: \Lambda^p\gfg \to \Lambda^{p-2}\gfg$ for $p \geq 2$. 
Then, using (\ref{A1partial}) and (\ref{A1action})
for $\xi \in \Lambda^p\gfg$ and $Y \in\gfg$, we compute
\begin{eqnarray*}
&&\pa\pa(\xi\wedge Y)
= \pa((\pa\xi)\wedge Y + (-1)^{p+1}\sigma(Y)\xi)\\
&=& (\pa\pa\xi)\wedge Y + (-1)^p\sigma(Y)\pa\xi
+ (-1)^{p+1}\pa(\sigma(Y)\xi)
= (\pa\pa\xi)\wedge Y = 0
\end{eqnarray*}
by the inductive assumption. This proves the lemma. 
\end{proof}

The pair $(\gfg, \nabla)$ is called a {\bf Lie algebra} if the map 
$\nabla$ satisfies the Jacobi identity (\ref{A1Jacobi}). 
The map $\nabla$ is called the {\bf bracket} of the Lie algebra. 
Then the $p$-th homology group of the chain complex 
$\Lambda^*\gfg = \{\Lambda^p\gfg, \pa\}_{p\geq0}$ is denoted by 
$$
H_p(\gfg) = H_p(\Lambda^*\gfg)
$$
and called the $p$-th homology group of the Lie algebra $\gfg$.
See, for example, \cite{CE}. \par

For any Lie algebra $\gfg$, by some straight-forward computation, 
one can prove the following, which will be used in \S\ref{LBA}. 
\begin{lem}\label{A1wedge}
For $\xi = X_1\wedge\cdots\wedge X_p \in \Lambda^p\gfg$ and 
$\eta = Y_1\wedge\cdots\wedge Y_q \in \Lambda^q\gfg$, 
$X_i, Y_j \in \gfg$, 
$$
\pa(\xi\wedge\eta) - (\pa\xi)\wedge\eta -(-1)^p\xi\wedge \pa\eta
= \sum^p_{i=1} (-1)^i X_1\wedge\overset{\hat i}\cdots\wedge X_p\wedge
\sigma(X_i)(\eta).
$$
\end{lem}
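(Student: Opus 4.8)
The plan is to prove the identity by a direct expansion of the left-hand side. Write $\xi\wedge\eta = Z_1\wedge\cdots\wedge Z_{p+q}$, where $Z_a := X_a$ for $1\le a\le p$ and $Z_{p+b} := Y_b$ for $1\le b\le q$, and apply the definition of $\pa$. The resulting sum $\sum_{a<b}(-1)^{a+b}[Z_a,Z_b]\wedge(\cdots)$ breaks into three groups: the terms with $a<b\le p$, the terms with $p<a<b$, and the ``mixed'' terms with $a\le p<b$. First I would check that the first group is, by the very definition of $\pa$ on $\Lambda^p\gfg$, equal to $(\pa\xi)\wedge\eta$, and that the second group, after carrying the bracket $[Y_k,Y_l]$ past the $p$ factors $X_1\wedge\cdots\wedge X_p$ (which produces the sign $(-1)^p$), is $(-1)^p\,\xi\wedge\pa\eta$. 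Both are immediate once the bookkeeping indices are set up; no Jacobi identity is needed here.

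The substance of the argument is to identify the mixed group with the right-hand side. Its term with $a=i\le p$ and $b=p+j$ equals $(-1)^{i+p+j}[X_i,Y_j]\wedge X_1\wedge\overset{\hat i}\cdots\wedge X_p\wedge Y_1\wedge\overset{\hat j}\cdots\wedge Y_q$. On the other side, expanding $\sigma(X_i)(\eta)$ by its definition and moving each bracket to the front gives $\sigma(X_i)(\eta)=\sum_j(-1)^{j-1}[X_i,Y_j]\wedge Y_1\wedge\overset{\hat j}\cdots\wedge Y_q$; then wedging on the left by $X_1\wedge\overset{\hat i}\cdots\wedge X_p$ and pulling the degree-one element $[X_i,Y_j]$ past those $p-1$ factors contributes $(-1)^{p-1}$, so that $(-1)^i\,X_1\wedge\overset{\hat i}\cdots\wedge X_p\wedge\sigma(X_i)(\eta)$ is precisely $\sum_j(-1)^{i+j+p}[X_i,Y_j]\wedge(\cdots)$. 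Since $i+j+p\equiv i+p+j\pmod 2$, summing over $i$ and $j$ matches the mixed group exactly, which finishes the proof.

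An alternative I would keep in mind is induction on $q$ via formula (\ref{A1partial}): the case $q=1$ is just a rewriting of (\ref{A1partial}) using $[Y,X_i]=-[X_i,Y]$ and the skew-commutativity of $\wedge$, and the inductive step applies (\ref{A1partial}) to $\xi\wedge\eta=(\xi\wedge\eta')\wedge Y_q$ together with the Leibniz property of $\sigma(X_i)$ on $\Lambda^*\gfg$. Either way there is no conceptual obstacle; the only thing that can go wrong is a sign slip in the mixed terms, so I would sanity-check that computation on the smallest case $p=q=1$ before writing it up.
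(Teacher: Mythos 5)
The paper offers no proof of this lemma, stating only that it follows ``by some straight-forward computation''; your direct expansion of $\pa(\xi\wedge\eta)$ into the three index groups, with the sign bookkeeping in the mixed terms checked correctly (both sides carry $(-1)^{i+j+p}$), is exactly that computation and is correct. Nothing to add.
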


\subsection{Lie coalgebras}\label{LCA}

Next we consider a $\mathbb{Q}$-vector space equipped 
with a $\mathbb{Q}$-linear map
$\delta: \gfg\to \gfg\otimes\gfg$ satisfying {\bf the coskew condition}
\begin{equation}
T\delta = -\delta: \gfg\to \gfg^{\otimes 2}.
\label{A2coskew}
\end{equation}
We may regard $\delta(\gfg) \subset \Lambda^2\gfg$.  
Then we define a $\mathbb{Q}$-linear map 
$d: \Lambda^{p}\gfg \to \Lambda^{p+1}\gfg$, $p \geq 0$, by 
$d\vert_{\Lambda^0\gfg} := 0$ and 
$$
d(X_1\wedge\cdots\wedge X_p) := 
\sum^p_{i=1}(-1)^i(\delta X_i)\wedge X_1\wedge\overset{\hat
i}\cdots\wedge X_p
$$
for any $p \geq 1$ and $X_i \in \gfg$. 
In particular, $dX = -\delta X$ for $X \in \gfg$. 
If $\xi \in \Lambda^p\gfg$ and $\eta \in \Lambda^q\gfg$, then
\begin{equation}
d(\xi\wedge\eta) = (d\xi)\wedge \eta + (-1)^p\xi\wedge(d\eta).
\label{A2wedge}
\end{equation}

\begin{lem}\label{A2dd=0}
We have $dd=0: \Lambda^*\gfg \to \Lambda^*\gfg$, if and only if
$\delta$ satisfies {\bf the coJacobi identity}
\begin{equation}
N(\delta\otimes 1)\delta = 0: \gfg \to \gfg^{\otimes 3}.
\label{A2coJacobi}
\end{equation}
\end{lem}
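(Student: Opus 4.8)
The plan is to follow the same two-step pattern as the proof of Lemma~\ref{A1dd=0}: first pin down the behaviour of $dd$ on $\gfg = \Lambda^1\gfg$, where it must reproduce the coJacobi identity, and then propagate this to the whole exterior algebra by means of the Leibniz rule (\ref{A2wedge}).

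For the first step I would compute $dd$ on a single $X \in \gfg$. Since $dX = -\delta X$ with $\delta X \in \Lambda^2\gfg$, applying $d$ once more and unwinding its definition on $\Lambda^2\gfg$ writes $ddX$ as a combination of the two elements $(\delta\otimes 1)\delta(X)$ and $(1\otimes\delta)\delta(X)$ of $\gfg^{\otimes 3}$. The coskew condition (\ref{A2coskew}) lets one rewrite $(1\otimes\delta)\delta(X)$ as a cyclic permutation of $(\delta\otimes 1)\delta(X)$, and the same condition shows that $(\delta\otimes 1)\delta(X)$ is anti-invariant under the transposition of its first two tensor factors; hence its total $\mathfrak{S}_3$-symmetrization vanishes, and one obtains
\[
ddX = c\,\operatorname{Alt}\bigl((\delta\otimes 1)\delta(X)\bigr) = c'\,N(\delta\otimes 1)\delta(X)
\]
for fixed nonzero rational scalars $c,c'$, where $\operatorname{Alt} = \sum_{\sigma\in\mathfrak{S}_3}(\operatorname{sgn}\sigma)\sigma$. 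The same anti-invariance shows that $N(\delta\otimes 1)\delta(X)$ in fact already lies in $\Lambda^3\gfg \subset \gfg^{\otimes 3}$ (it transforms by the sign character of $\mathfrak{S}_3$), so that $ddX = 0$ in $\Lambda^3\gfg$ is genuinely equivalent to the coJacobi identity (\ref{A2coJacobi}), not merely implied by it. I expect this sign-and-symmetrization bookkeeping — matching the several terms produced by $d$ with the single cyclically symmetrized term $N(\delta\otimes 1)\delta(X)$ — to be the main (and essentially only) obstacle.

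For the second step, the Leibniz rule (\ref{A2wedge}) gives, for $\xi\in\Lambda^p\gfg$ and $\eta\in\Lambda^q\gfg$,
\[
dd(\xi\wedge\eta) = (dd\xi)\wedge\eta + \bigl((-1)^{p+1} + (-1)^p\bigr)(d\xi)\wedge(d\eta) + \xi\wedge(dd\eta) = (dd\xi)\wedge\eta + \xi\wedge(dd\eta),
\]
so that $dd$ is an even derivation of the exterior algebra $\Lambda^*\gfg$. Since $\Lambda^*\gfg$ is generated as a $\mathbb{Q}$-algebra by $\Lambda^0\gfg$ and $\Lambda^1\gfg = \gfg$, and $dd$ vanishes on $\Lambda^0\gfg$ (because $d\vert_{\Lambda^0\gfg} = 0$) and, assuming (\ref{A2coJacobi}), on $\gfg$ by the first step, it follows that $dd = 0$ on all of $\Lambda^*\gfg$. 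Conversely, $dd = 0$ on $\Lambda^*\gfg$ restricts in particular to $ddX = 0$ for $X \in \gfg$, which by the first step is the coJacobi identity. Alternatively one can repeat verbatim the induction on $p$ used for Lemma~\ref{A1dd=0}, after first establishing the dual analogue of (\ref{A1action}) stating that $d$ commutes with the coaction induced by $\delta$; the derivation argument above merely packages that induction more compactly.
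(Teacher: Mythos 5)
Your proposal is correct and follows essentially the same route as the paper: the paper's computation shows $(\delta X)\wedge Y = N((\delta X)Y)$ and hence $dd = N(\delta\otimes 1)\delta$ exactly on $\gfg$ (so your undetermined scalar is $c'=1$), and its inductive step is precisely your observation that the Leibniz rule \eqref{A2wedge} makes $dd$ an even derivation killing the cross terms. The only cosmetic difference is that the paper never needs an analogue of \eqref{A1action} here, since $d$ (unlike $\pa$) is already a derivation for $\wedge$.
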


\begin{proof}
If we denote $\delta X = \sum_i X'_i\wedge X''_i$, 
$X'_i, X''_i \in\gfg$, then we have 
\begin{eqnarray*}
&& (\delta X)\wedge Y = \sum X'_i\wedge X''_i\wedge Y\\
&=& \sum X'_iX''_iY + X''_iYX'_i + YX'_iX''_i 
- X''_iX'_iY - X'_iYX''_i - YX''_iX'_i\\
&=& N((\delta X)Y).
\end{eqnarray*}
This implies $d(X\wedge Y) = -(\delta X)\wedge Y + (\delta Y)\wedge X
= -N((\delta X)Y) + N((\delta Y)X)
= -N(\delta\otimes 1)(XY-YX) = -N(\delta\otimes 1)(X\wedge Y)$.
Since $\delta\gfg \subset \Lambda^2\gfg$, we obtain
\begin{equation}
dd = N(\delta\otimes 1)\delta: \gfg \to \gfg^{\otimes 3}.
\label{A2dd}
\end{equation}
Hence $dd=0$ implies the coJacobi identity. \par
Assume the coJacobi identity. 
We prove $dd = 0: 
\Lambda^p\gfg \to \Lambda^{p+2}\gfg$ by induction on $p \geq 1$. 
In the case $p=1$, $dd = 0$ is equivalent to the coJacobi identity.
Assume $dd = 0: \Lambda^p\gfg \to \Lambda^{p+2}\gfg$ for $p \geq 1$. 
Then, for $\xi \in \Lambda^p\gfg$ and $Y \in\gfg$, we have 
$dd(\xi\wedge Y) = d((d\xi)\wedge Y + (-1)^p\xi\wedge dY) 
= (dd\xi)\wedge Y + (-1)^{p+1}(d\xi)\wedge dY + (-1)^pd\xi\wedge dY
+\xi\wedge ddY = (dd\xi)\wedge Y +\xi\wedge ddY = 0$ 
by the inductive assumption. This proves the lemma. 
\end{proof}

The pair $(\gfg, \delta)$ is called a {\bf Lie coalgebra} if the map 
$\delta$ satisfies the coJacobi identity (\ref{A2coJacobi}). 
The map $\delta
$ is called the {\bf cobracket} of the Lie coalgebra. 
Then the $p$-th cohomology group of the cochain complex 
$\Lambda^*\gfg = \{\Lambda^p\gfg, d\}_{p\geq0}$ is denoted by 
$$
H^p(\gfg) = H^p(\Lambda^*\gfg)
$$
and called the $p$-th cohomology group of the Lie coalgebra $\gfg$. 
In view of the formula (\ref{A2wedge}), $H^*(\gfg)$ is a graded 
commutative algebra. 
\par
Assume $\gfg$ is a complete filtered $\mathbb{Q}$-vector space, 
i.e., there exists a decreasing filtration $\gfg = F_0\gfg \supset
F_1\gfg \supset \cdots \supset F_n\gfg \supset F_{n+1}\gfg \supset\cdots$ 
such that the completion map $\gfg \to \widehat{\gfg} 
:= \varprojlim_{n\to\infty}\gfg/F_n\gfg$ is an isomorphism. 
Then we can consider a $\mathbb{Q}$-linear map 
$\delta: \gfg \to \gfg\widehat{\otimes}\gfg$, whose target is 
the completed tensor product of two copies of $\gfg$. 
Then the pair $(\gfg, \delta)$ is a complete Lie coalgebra 
if the map $\delta$ satisfies the coskew condition (\ref{A2coskew})
and the coJacobi identity (\ref{A2coJacobi}), where $\gfg^{\otimes 2}$ 
and $\gfg^{\otimes 3}$ are replaced by the completed tensor product
$\gfg^{\widehat{\otimes} 2}$ and $\gfg^{\widehat{\otimes} 3}$, 
respectively. In this case we consider the $p$-th complete exterior 
power, i.e., the alternating part of $\gfg^{\widehat{\otimes} p}$, 
instead of $\Lambda^p\gfg$ for any $p \geq 0$. \par

\subsection{Involutive Lie bialgebras}\label{LBA}

Let $(\gfg, \nabla)$ be a Lie algebra, and $(\gfg, \delta)$ a Lie 
coalgebra with the same underlying vector space $\gfg$. 
We look at the operator $d\pa + \pa d: \Lambda^p\gfg \to 
\Lambda^p\gfg$ for $p \geq 0$. It is clear $d\pa + \pa d = 0$ 
for $p=0$. 
\begin{lem}\label{A3axiom}
We have $d\pa + \pa d = 0: \Lambda^p\gfg \to 
\Lambda^p\gfg$ for $p=1$ and $2$, if and only if
$\nabla$ and $\delta$ satisfy {\bf the compatibility condition}
\begin{equation}
\forall X, \forall Y \in \gfg, \quad
\delta [X, Y] = \sigma(X)(\delta Y) - \sigma(Y)(\delta X),
\label{A3compatible}
\end{equation}
and {\bf the involutivity}
\begin{equation}
\nabla\delta = 0: \gfg \to \gfg.
\label{A3involutive}
\end{equation}
\end{lem}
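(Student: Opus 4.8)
The plan is to compute the operator $d\pa + \pa d$ explicitly on $\Lambda^1\gfg$ and on $\Lambda^2\gfg$ and to match the resulting identities against the two stated conditions. The $p=1$ case is quick: for $X \in \gfg$ we have $\pa X = 0$, so $(d\pa + \pa d)(X) = \pa(dX) = \pa(-\delta X) = -\pa(\delta X)$. Writing $\delta X = \sum_i X_i'\wedge X_i''$ and using $\pa(X_i'\wedge X_i'') = [X_i', X_i'']$, this equals $-\nabla\delta(X)$ (up to the fixed sign convention in~(\ref{A0wedge})). Hence $d\pa + \pa d = 0$ on $\Lambda^1\gfg$ is exactly the involutivity~(\ref{A3involutive}). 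This disposes of one half of the statement and isolates~(\ref{A3compatible}) as the content of the $p=2$ case.

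Next I would turn to $\Lambda^2\gfg$. Take $\xi = X\wedge Y$. Using~(\ref{A2wedge}) together with the Leibniz-type rule~(\ref{A1partial}) (or more conveniently Lemma~\ref{A1wedge}), I expand both $d\pa(X\wedge Y)$ and $\pa d(X\wedge Y)$. The term $d\pa(X\wedge Y) = d[X,Y] = -\delta[X,Y]$ produces the left-hand side of the compatibility condition. The term $\pa d(X\wedge Y)$, where $d(X\wedge Y) = -(\delta X)\wedge Y + (\delta Y)\wedge X \in \Lambda^3\gfg$, must be pushed through $\pa$; applying $\pa$ to a wedge of the form $(\delta X)\wedge Y$ and using Lemma~\ref{A1wedge} (with $p=2$, $q=1$) to separate the ``internal'' $\pa(\delta X)$ from the ``mixed'' bracket terms, one finds that the internal pieces contribute terms of the form $\nabla\delta(\,\cdot\,)\wedge(\,\cdot\,)$ — which vanish precisely under the involutivity already extracted from the $p=1$ case — while the mixed pieces reassemble into $\sigma(X)(\delta Y) - \sigma(Y)(\delta X)$. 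Collecting signs, $d\pa + \pa d = 0$ on $\Lambda^2\gfg$ becomes $-\delta[X,Y] + \sigma(X)(\delta Y) - \sigma(Y)(\delta X) = 0$, i.e.~(\ref{A3compatible}). For the converse direction one simply observes that each of these computations is an equivalence at the level of the generators $X\wedge Y$ (resp.\ $X$), which span $\Lambda^2\gfg$ (resp.\ $\Lambda^1\gfg$), so the vanishing of the operator is equivalent to the two pointwise identities.

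The step I expect to be the main obstacle is the bookkeeping in the $p=2$ computation: one must carefully track the sign conventions built into~(\ref{A0wedge}) and into the definitions of $\pa$ and $d$, and verify that the ``internal'' contributions coming from $\pa(\delta X)$ genuinely reduce to the involutivity expression $\nabla\delta$ rather than to something merely proportional to it. Lemma~\ref{A1wedge} is the natural tool here, since it is exactly the statement of how $\pa$ fails to be a derivation on a wedge product, and the failure term is built from the $\sigma$-action — which is what we want to see appear on the right-hand side of~(\ref{A3compatible}). Once the signs are pinned down, the argument is a short, direct verification; I would present it by displaying the expansion of $d\pa(X\wedge Y)$ and $\pa d(X\wedge Y)$ line by line, citing~(\ref{A1partial}), (\ref{A2wedge}), and Lemma~\ref{A1wedge} at the appropriate steps, and then reading off the equivalence.
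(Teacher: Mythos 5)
Your proposal is correct and follows essentially the same route as the paper: the $p=1$ case is identified with the involutivity, and then, assuming involutivity, the $p=2$ computation on $X\wedge Y$ is carried out via the Leibniz-type rule (\ref{A1partial}) (the $q=1$ case of Lemma \ref{A1wedge}), with the $\pa(\delta X)$-terms killed by involutivity and the remaining $\sigma$-terms giving exactly the compatibility condition. The bookkeeping you flag as the main obstacle works out just as you predict, so no further comment is needed.
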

\begin{proof}
From the definition, the involutivity is equivalent to 
$d\pa+\pa d = 0$ for $p=1$. Assume the involutivity. 
Then, for $X$ and $Y \in \gfg$, we have 
$(d\pa+\pa d)(X\wedge Y) 
= -d[X, Y] +\pa((dX)\wedge Y - X\wedge (dY))
= \delta [X, Y] + (\pa dX)\wedge Y - \sigma(Y)(dX)
- (\pa dY)\wedge X - \sigma(X)(dY)
= \delta [X, Y] + \sigma(Y)(\delta X) - \sigma(X)(\delta Y)$.
Hence $d\pa+\pa d = 0$ for $p=2$ is equivalent 
to the compatibility condition. This proves the lemma.
\end{proof}

When the compatibility condition holds, $\gfg$ is called a {\bf 
Lie bialgebra}. This is the definition given by Drinfel'd in \cite{D}. 
A Lie bialgebra $\gfg$ is called 
{\bf involutive}, if it satisfies the involutivity.
\par

\begin{lem}\label{A3partial}
If $\gfg$ is a Lie bialgebra, we have
$$
\pa(\xi\wedge dY) - (\pa\xi)\wedge dY - (-1)^p\xi\wedge \pa dY
= d\sigma(Y)\xi - \sigma(Y)d\xi
$$
for $\xi \in \Lambda^p\gfg$ and $Y \in \gfg$. 
\end{lem}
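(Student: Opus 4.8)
The plan is to mimic the structure of Lemma \ref{A1wedge} and the proof of Lemma \ref{A3axiom}, proceeding by induction on $p$ with the building blocks already assembled. The desired identity is a statement that the ``Leibniz defect'' of $\pa$ against a right multiplication by $dY$ is controlled by the commutator $[d, \sigma(Y)]$. The right-hand side $d\sigma(Y) - \sigma(Y)d$ is independent of $\xi$ in a multiplicative sense, so the natural strategy is to check $p=0$ and $p=1$ directly, then propagate to general $p$ by writing $\xi = \xi'\wedge X$ with $\xi'\in\Lambda^{p-1}\gfg$ and using the Leibniz rules \eqref{A1partial}, \eqref{A1action}, \eqref{A2wedge}, together with Lemma \ref{A1wedge}.

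First I would record the base cases. For $p=0$, both sides vanish: the left side is $\pa(dY) - 0 - \pa dY = 0$, and the right side is $d\sigma(Y)(1) - \sigma(Y)d(1) = 0$ since $d$ and $\sigma(Y)$ kill $\Lambda^0\gfg$. For $p=1$, take $\xi = X$; then the left side is $\pa(X\wedge dY) - (\pa X)\wedge dY + X\wedge\pa dY = \pa(X\wedge dY) + X\wedge \pa dY$ (as $\pa X = 0$), which by \eqref{A1partial} equals $(\pa X)\wedge dY + \sigma(dY)(X)$... here I must be careful: $dY \in \Lambda^2\gfg$, not $\gfg$, so \eqref{A1partial} does not apply verbatim. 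Instead I expand $dY = -\delta Y = -\sum_i Y_i'\wedge Y_i''$ and compute $\pa(X\wedge Y_i'\wedge Y_i'')$ by the definition of $\pa$, matching it against $d\sigma(Y)(X) - \sigma(Y)(dX) = d[Y,X] + \delta[X,Y]$; this reduces, using the compatibility condition \eqref{A3compatible} and the definition of $\sigma$ on $\Lambda^2\gfg$, to an identity among triple brackets that is a consequence of the Jacobi identity (Lemma \ref{A1dd=0}). This $p=1$ case is where most of the explicit symbol-pushing lives.

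Next, for the inductive step, assume the formula for $p-1$ and write $\xi = \xi'\wedge X$ with $\xi' \in \Lambda^{p-1}\gfg$. I expand each of the three terms on the left using \eqref{A1partial} (for the $\pa$ of a wedge with $dY$ absorbed into $\xi$ appropriately), \eqref{A2wedge} (for $d$ of a wedge), and Lemma \ref{A1wedge} (for the mismatch between $\pa(\xi'\wedge X \wedge dY)$ and the naive Leibniz expansion, since $dY$ has even degree $2$ and $X$ has odd degree $1$). Simultaneously I expand the right-hand side: $d\sigma(Y)(\xi'\wedge X) = d(\sigma(Y)\xi' \wedge X + \xi'\wedge[Y,X])$ and $\sigma(Y)d(\xi'\wedge X) = \sigma(Y)((d\xi')\wedge X + (-1)^{p-1}\xi'\wedge dX)$, each further broken down by \eqref{A2wedge} and the definition of $\sigma$. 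After collecting terms, the inductive hypothesis cancels the $\xi'$-part, and the remaining terms — those carrying the extra factor $X$ — are precisely the $p=1$ instance of the identity applied with $\xi = X$, plus commutators $\sigma(Y)\sigma(X) - \sigma(X)\sigma(Y) = \sigma([Y,X])$ (which is \eqref{A1action} read as an action compatibility). So the whole thing closes.

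The main obstacle, as usual in these Cartan-calculus computations, is bookkeeping the signs and the degree shifts: $\pa$ drops degree by $1$, $d$ raises it by $1$, $\sigma(Y)$ preserves it, and $dY$ sits in degree $2$, so every application of a Leibniz rule produces sign factors $(-1)^p$, $(-1)^{p+1}$, $(-1)^{p-1}$ that must be tracked precisely, and the ``correction'' term in Lemma \ref{A1wedge} is itself a sum over insertions. I would organize the computation by first proving it for decomposables $\xi = X_1\wedge\cdots\wedge X_p$ (which suffices by linearity) and by introducing the shorthand $\delta Y = \sum_i Y_i'\wedge Y_i''$ throughout, so that $dY$ is handled as a genuine element of $\Lambda^2\gfg$ rather than symbolically; the compatibility condition \eqref{A3compatible} and the Jacobi identity (via \eqref{A1action}) are the only structural inputs beyond the Leibniz rules, and I do not expect to need the involutivity \eqref{A3involutive} here.
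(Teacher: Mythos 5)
Your plan is workable and identifies the right ingredients (Lemma \ref{A1wedge}, the compatibility condition, the Leibniz rules for $d$ and $\sigma(Y)$, and the observation that involutivity is not needed), but it is organized quite differently from the paper and is more laborious than necessary. The paper's proof has no induction at all: it computes the right-hand side directly for a decomposable $\xi = X_1\wedge\cdots\wedge X_p$. Since both $d$ and $\sigma(Y)$ act as (odd resp.\ even) derivations on wedges, the commutator $d\sigma(Y)-\sigma(Y)d$ localizes to single factors, giving $\sum_i(-1)^{i-1}X_1\wedge\cdots\wedge(d[Y,X_i]-\sigma(Y)dX_i)\wedge\cdots\wedge X_p$; the compatibility condition says precisely that $d[Y,X_i]-\sigma(Y)dX_i=-\sigma(X_i)(dY)$, and the resulting sum $\sum_i(-1)^iX_1\wedge\overset{\hat i}\cdots\wedge X_p\wedge\sigma(X_i)(dY)$ is literally the right-hand side of Lemma \ref{A1wedge} with $\eta=dY$, which is the left-hand side of the statement. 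So the whole lemma is a two-step identification; your induction reproves the derivation-commutator localization by hand, and your base case $p=1$ reproves the special case of Lemma \ref{A1wedge} by expanding $\delta Y=\sum Y_i'\wedge Y_i''$. What your route buys is nothing beyond what the direct computation gives, at the cost of the sign bookkeeping you yourself flag as the main obstacle.

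One inaccuracy worth correcting: you say the $p=1$ case reduces, after using compatibility, ``to an identity among triple brackets that is a consequence of the Jacobi identity.'' It does not. If you carry out that computation, the left-hand side equals $\sigma(X)(\delta Y)=-\sigma(X)(dY)$ (only single brackets $[X,Y_i']$, $[Y_i',Y_i'']$ appear, and the $[Y_i',Y_i'']$ terms cancel), and the right-hand side equals $-\delta[Y,X]+\sigma(Y)\delta X$, so the equality is exactly the compatibility condition \eqref{A3compatible} and nothing further remains to be checked; the Jacobi identity enters only implicitly through the standing hypothesis of Lemma \ref{A1wedge}. This would resolve itself once you did the computation, so it is a misdescription rather than a gap, but as written it points you toward a verification that does not exist.
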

\begin{proof}
It suffices to show the lemma for $\xi = X_1\wedge\cdots\wedge X_p$, 
$X_i \in \gfg$. By the compatibility condition, we have 
\begin{eqnarray*}
&& d\sigma(Y)\xi - \sigma(Y)d\xi
= \sum^p_{i=1}(-1)^{i-1}X_1\wedge\cdots\wedge
(d[Y, X_i] - \sigma(Y)dX_i)\wedge\cdots\wedge X_p \\
&=& \sum^p_{i=1}(-1)^{i}X_1\wedge\cdots\wedge
\sigma(X_i)dY\wedge\cdots\wedge X_p 
= \sum^p_{i=1}(-1)^{i}X_1\wedge\overset{\hat i}\cdots\wedge X_p\wedge
\sigma(X_i)dY,
\end{eqnarray*}
which equals $\pa(\xi\wedge dY) - (\pa\xi)\wedge dY 
- (-1)^p\xi\wedge \pa dY$ from Lemma \ref{A1wedge}. 
This proves the lemma.
\end{proof}

\begin{prop}\label{A3dd+dd}
If $\gfg$ is a Lie bialgebra, then we have 
$$
(d\pa+\pa d)(X_1\wedge\cdots\wedge X_p) 
= \sum^p_{i=1} X_1\wedge\cdots\wedge X_{i-1}\wedge (\pa dX_i)\wedge
X_{i+1}\wedge\cdots\wedge X_p
$$
for $X_i \in \gfg$. 
\end{prop}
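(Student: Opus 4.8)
The plan is to prove the formula by induction on $p$, using the Leibniz-type rules for $\pa$ (Lemma~\ref{A1wedge}) and $d$ (equation~(\ref{A2wedge})) to peel off one factor at a time. The base cases $p=0,1,2$ are already at hand: for $p=0$ the statement is the trivial identity $0=0$, for $p=1$ it reads $(d\pa+\pa d)X = \pa dX$ which is immediate since $\pa X = 0$, and for $p=2$ it is exactly the computation $(d\pa+\pa d)(X\wedge Y) = (\pa dX)\wedge Y + X\wedge(\pa dY)$ carried out inside the proof of Lemma~\ref{A3axiom} (there one sees $(d\pa+\pa d)(X\wedge Y) = \delta[X,Y] + \sigma(Y)(\delta X) - \sigma(X)(\delta Y)$, and the compatibility condition rewrites this as $\sigma(Y)(dX)\cdot$-type terms which reassemble into $(\pa dX)\wedge Y + X\wedge(\pa dY)$ via (\ref{A1partial})).

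For the inductive step I would write $\xi = X_1\wedge\cdots\wedge X_p$, set $Y = X_{p+1}$, and expand $(d\pa+\pa d)(\xi\wedge Y)$ by applying (\ref{A2wedge}) to $d(\pa(\xi\wedge Y))$ and Lemma~\ref{A1wedge} together with (\ref{A1partial}) to $\pa(d(\xi\wedge Y))$. Concretely, $\pa(\xi\wedge Y) = (\pa\xi)\wedge Y + (-1)^{p+1}\sigma(Y)\xi$ by (\ref{A1partial}), and $d(\xi\wedge Y) = (d\xi)\wedge Y + (-1)^p\xi\wedge dY$ by (\ref{A2wedge}). Then
\begin{align*}
d\pa(\xi\wedge Y) &= (d\pa\xi)\wedge Y + (-1)^{p-1}(\pa\xi)\wedge dY + (-1)^{p+1}d\sigma(Y)\xi,\\
\pa d(\xi\wedge Y) &= (\pa d\xi)\wedge Y + (-1)^{p+1}(d\xi)\wedge\sigma(Y)(\cdot)\text{-correction} + (-1)^p\pa(\xi\wedge dY),
\end{align*}
where the $\pa(\xi\wedge dY)$ term is expanded by Lemma~\ref{A1wedge} (note $dY = -\delta Y \in \Lambda^2\gfg$, so Lemma~\ref{A1wedge} applies with $q=2$). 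Adding the two lines, the terms $(d\pa\xi + \pa d\xi)\wedge Y$ combine to give, by the inductive hypothesis, $\sum_{i=1}^p X_1\wedge\cdots\wedge(\pa dX_i)\wedge\cdots\wedge X_p\wedge Y$, which is the $i\le p$ part of the desired right-hand side. All the remaining terms must therefore collapse to $\xi\wedge(\pa d X_{p+1})$, i.e. the $i=p+1$ summand.

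The main obstacle is precisely this last cancellation: the leftover terms are $(-1)^{p-1}(\pa\xi)\wedge dY + (-1)^{p+1}d\sigma(Y)\xi + (-1)^p\bigl[\pa(\xi\wedge dY) - (\pa\xi)\wedge dY - (-1)^p\xi\wedge\pa dY\bigr] + (-1)^p\xi\wedge\pa dY$, plus the sign-bookkeeping term coming from $\pa$ acting across the wedge in $\pa d(\xi\wedge Y)$. Here Lemma~\ref{A3partial} is the key tool: it identifies $\pa(\xi\wedge dY) - (\pa\xi)\wedge dY - (-1)^p\xi\wedge\pa dY$ with $d\sigma(Y)\xi - \sigma(Y)d\xi$, so the $d\sigma(Y)\xi$ contributions cancel in pairs and one is left only with $\xi\wedge\pa dY$ together with $\sigma(Y)d\xi$-type terms that must vanish against the corresponding pieces of $\pa d(\xi\wedge Y)$. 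I expect that once the signs are tracked carefully using (\ref{A1partial}) for $\sigma(Y)\pa\xi$ versus $\pa\sigma(Y)\xi$ and the relation $dX_i = -\delta X_i$, everything reduces as claimed; the bookkeeping of the $(-1)$ powers across the various Leibniz rules is the only genuinely delicate part, and Lemmas~\ref{A1wedge} and~\ref{A3partial} are exactly what make it go through.
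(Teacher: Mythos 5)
Your proof is correct and follows essentially the same route as the paper's: induction on $p$, expanding $(d\pa+\pa d)(\xi\wedge Y)$ via (\ref{A1partial}) and (\ref{A2wedge}) and using Lemma \ref{A3partial} to convert $\pa(\xi\wedge dY)-(\pa\xi)\wedge dY-(-1)^p\xi\wedge\pa dY$ into $d\sigma(Y)\xi-\sigma(Y)d\xi$, after which all cross terms cancel in pairs leaving $((d\pa+\pa d)\xi)\wedge Y+\xi\wedge\pa dY$. The sign bookkeeping you defer does work out exactly as you anticipate, so the argument is complete in substance.
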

\begin{proof}
It is clear for $p=1$. Assume it holds for $p \geq 1$.
Denote $\xi = X_1\wedge\cdots\wedge X_p$ and $Y = X_{p+1}$. 
Then, from Lemma \ref{A3partial}, 
$(d\pa +\pa d)(\xi\wedge Y) 
= d((\pa\xi)\wedge Y + (-1)^{p+1}\sigma(Y)\xi)
+ \pa((d\xi)\wedge Y + (-1)^p\xi\wedge dY) 
= (d\pa Y)\wedge Y + (-1)^{p+1}(\pa\xi)\wedge dY
+(-1)^{p+1}d\sigma(Y)\xi + (\pa d\xi)\wedge Y
+ (-1)^{p+2}\sigma(Y)d\xi + (-1)^p\pa(\xi\wedge dY)
= ((d\pa + \pa d)\xi)\wedge Y + \xi\wedge\pa dY$.
This proceeds the induction.
\end{proof}

\begin{cor}\label{A3dd=0}
A Lie bialgebra $\gfg$ satisfies $d\pa + \pa d = 0: \Lambda^p\gfg \to
\Lambda^p\gfg$ for any $p \geq 0$, if and only if $\gfg$ is involutive.
\end{cor}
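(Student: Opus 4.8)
The plan is to read the corollary off Proposition~\ref{A3dd+dd}. That proposition expresses $(d\pa+\pa d)(X_1\wedge\cdots\wedge X_p)$ as a sum of wedge products in which, for $i=1,\dots,p$, the $i$-th factor $X_i$ is replaced by $\pa dX_i$. So the first step is to identify this single element $\pa dX\in\gfg$ for $X\in\gfg$: since $dX=-\delta X$ and, writing $\delta X=\sum_j X'_j\wedge X''_j\in\Lambda^2\gfg$, one has $\pa(X'_j\wedge X''_j)=-[X'_j,X''_j]$, the element $\pa dX$ is a fixed nonzero rational multiple of $\nabla\delta X$ (with the present sign conventions, $\pa dX=\tfrac12\nabla\delta X$). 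Hence $\pa dX=0$ for all $X\in\gfg$ if and only if the involutivity $\nabla\delta=0$ holds. Equivalently, this is just the case $p=1$ of Lemma~\ref{A3axiom}, where $\pa$ vanishes on $\Lambda^1\gfg$ so that $d\pa+\pa d=\pa d$ there.

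Next I would treat the ``if'' direction. Assuming $\gfg$ is involutive, $\pa dX_i=0$ for every $X_i\in\gfg$, and Proposition~\ref{A3dd+dd} gives
$$
(d\pa+\pa d)(X_1\wedge\cdots\wedge X_p)=\sum_{i=1}^{p}X_1\wedge\cdots\wedge X_{i-1}\wedge(\pa dX_i)\wedge X_{i+1}\wedge\cdots\wedge X_p=0
$$
for all $p\geq1$ and all $X_i\in\gfg$; the case $p=0$ is clear. As the elements $X_1\wedge\cdots\wedge X_p$ span $\Lambda^p\gfg$, this gives $d\pa+\pa d=0$ on all of $\Lambda^*\gfg$.

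For the ``only if'' direction, assume $d\pa+\pa d=0$ on every $\Lambda^p\gfg$. Specializing to $p=1$ and using $\pa\vert_{\Lambda^1\gfg}=0$ gives $\pa dX=0$ for all $X\in\gfg$, whence $\nabla\delta=0$ by the first step, i.e.\ $\gfg$ is involutive. There is no real obstacle in this argument; the only thing deserving a little care is bookkeeping the sign and the scalar relating $\pa d$ on $\Lambda^1\gfg$ to the involutivity operator $\nabla\delta$, and this can be sidestepped altogether by invoking the $p=1$ case of Lemma~\ref{A3axiom} directly.
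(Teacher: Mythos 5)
Your proposal is correct and follows the paper's intended route exactly: the corollary is read off Proposition~\ref{A3dd+dd}, with the identification of $\pa d\vert_{\Lambda^1\gfg}=0$ with the involutivity $\nabla\delta=0$ supplied by the $p=1$ case of Lemma~\ref{A3axiom}. The sign/scalar bookkeeping ($\pa dX=\tfrac12\nabla\delta X$ under the paper's embedding $\Lambda^2\gfg\subset\gfg^{\otimes 2}$) is also consistent, and you rightly note it can be bypassed by citing the lemma directly.
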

This completes the proof of Proposition \ref{0bialg} stated in Introduction.\par
For an involutive Lie bialgebra $\gfg$, the operator $d$ induces 
the coboundary operator 
\begin{equation}
d = d(\delta): H_p(\gfg) \to H_{p+1}(\gfg), \quad
[u] \mapsto [du]
\label{A3d}\end{equation}
on the homology group $H_*(\gfg)$. 
Hence one can define the cohomology of the homology 
$H^*(H_*(\gfg))$. \par
When the pair $(\gfg, \delta)$ is a complete Lie coalgebra, 
we have to assume that the bracket $\nabla$ is continuous
with respect to the filtration of $\gfg$, and to 
replace the exterior algebra $\Lambda^*\gfg$ 
by the complete exterior algebra of $\gfg$ in the three propositions 
in this subsection.  Then all of them hold good. 
In particular, we can consider a complete Lie bialgebra and 
a complete involutive Lie bialgebra. 
Similarly we can 
consider a complete comodule and a complete (involutive)
bimodule in the next section. \par

\section{Bimodules}

We discuss a homological background of the defining conditions of 
an involutive bimodule introduced by Kuno and the author in \cite{KK4}. 
In other words, we prove Proposition \ref{0bimod} stated in Introduction.

\subsection{Modules}

Let $\gfg$ be a Lie algebra, $M$ a $\mathbb{Q}$-vector space 
equipped with a 
$\mathbb{Q}$-linear map $\sigma: \gfg\otimes M \to M$, $X\otimes m
\mapsto Xm$. We define a $\mathbb{Q}$-linear map $\Gamma_\sigma = \Gamma:
M\otimes\Lambda^p\gfg \to M\otimes\Lambda^{p-1}\gfg$ by 
$\Gamma(m\otimes X_1\wedge\cdots\wedge X_p) :=
\sum^p_{i=1}(-1)^i(X_im)\otimes X_1\wedge\overset{\hat i}\cdots\wedge
X_p$ for $p \geq 1$, $m \in M$ and $X_i \in \gfg$, and a
$\mathbb{Q}$-linear map $\pa^M = \pa: M \otimes\Lambda^p\gfg \to
M\otimes\Lambda^{p-1}\gfg$ by $\pa(m\otimes\xi) := \Gamma(m\otimes\xi) +
m\otimes\pa(\xi)$ for $m \in M$ and $\xi \in \Lambda^p\gfg$. Here $\pa:
\Lambda^p\gfg \to \Lambda^{p-1}\gfg$ is the operator introduced in 
\S\ref{LA}. By some straight-forward computation, we have 
\begin{equation}
\Gamma(m\otimes\xi\wedge\eta) 
= \Gamma(m\otimes\xi)\wedge\eta + (-1)^{pq}\Gamma(m\otimes\eta)\wedge\xi
\label{A5Gpartial}
\end{equation}
for any $m \in M$, $\xi\in \Lambda^p\gfg$ and $\eta \in \Lambda^q\gfg$. 
Furthermore we define a $\mathbb{Q}$-linear map $\sigma:
\gfg\otimes M\otimes\Lambda^p\gfg \to M\otimes\Lambda^p\gfg$ by 
$
\sigma(Y)(m\otimes\xi) := (Ym)\otimes\xi + m\otimes\sigma(Y)(\xi)
$
for $Y \in \gfg$, $m \in M$ and $\xi \in \Lambda^p\gfg$. Then it is easy
to show
\begin{equation}
\pa(m\otimes\xi\wedge Y) = \pa(m\otimes\xi)\wedge Y +
(-1)^{p+1}\sigma(Y)(m\otimes\xi).
\label{A5partial}
\end{equation}

\begin{lem}\label{A5dd=0}
We have $\pa^M\pa^M = 0: M\otimes\Lambda^*\gfg \to
M\otimes\Lambda^*\gfg$, if and only if the condition
\begin{equation}
\forall X, \forall Y \in \gfg, \forall m \in M, \quad 
[X, Y]m = X(Ym) - Y(Xm)
\label{A5action}
\end{equation}
holds. 
\end{lem}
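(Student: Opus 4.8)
The plan is to mirror the structure of the proofs of Lemmas \ref{A1dd=0} and \ref{A2dd=0}: first evaluate $\pa^M\pa^M$ on generators of the form $m\otimes X\wedge Y$ to extract the low-degree equivalence, then push to all of $M\otimes\Lambda^*\gfg$ by induction on $p$ using a Leibniz-type identity. Concretely, I would begin by computing $\pa^M\pa^M(m\otimes X\wedge Y)$ for $m\in M$ and $X,Y\in\gfg$. Expanding $\pa^M = \Gamma + (1\otimes\pa)$, the terms involving $\pa$ alone vanish because $\pa\pa=0$ on $\Lambda^*\gfg$ (Lemma \ref{A1dd=0}, since $\gfg$ is a Lie algebra); the cross terms $\Gamma(1\otimes\pa)$ and $(1\otimes\pa)\Gamma$ combine with $\Gamma\Gamma$ to leave precisely $\bigl(X(Ym)-Y(Xm)-[X,Y]m\bigr)\otimes 1$. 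Hence $\pa^M\pa^M=0$ on degree $2$ (equivalently on $M\otimes\gfg$ after one application) is equivalent to (\ref{A5action}).

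For the converse direction I would assume (\ref{A5action}) and prove $\pa^M\pa^M=0:M\otimes\Lambda^p\gfg\to M\otimes\Lambda^{p-2}\gfg$ by induction on $p$. The base cases $p=0,1$ are trivial and $p=2$ is the computation just described. For the inductive step, write a general element as $(m\otimes\xi)\wedge Y$ with $\xi\in\Lambda^{p-1}\gfg$ and $Y\in\gfg$, and apply (\ref{A5partial}):
$$
\pa^M\pa^M\bigl((m\otimes\xi)\wedge Y\bigr) = \pa^M\Bigl(\pa^M(m\otimes\xi)\wedge Y + (-1)^{p}\sigma(Y)(m\otimes\xi)\Bigr).
$$
Applying (\ref{A5partial}) again to the first summand and expanding, the doubly-$\pa^M$ term on $m\otimes\xi$ vanishes by the inductive hypothesis, and what remains is a sum of terms involving $\sigma(Y)$; these cancel provided one has the analogue of (\ref{A1action}), namely that $\sigma(Y)$ commutes with $\pa^M$ on $M\otimes\Lambda^*\gfg$. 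So the key auxiliary step is to establish $\sigma(Y)\pa^M = \pa^M\sigma(Y)$, which follows from (\ref{A5action}) together with the Jacobi identity in $\gfg$ (the latter handling the $m\otimes\sigma(Y)\xi$ part exactly as in (\ref{A1action})) by a straightforward computation on generators.

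The main obstacle, such as it is, is purely bookkeeping: tracking the signs $(-1)^i$, $(-1)^{p+1}$ and the sign in (\ref{A5partial}) so that the cross terms in the $p=2$ computation really do collapse to $\bigl(X(Ym)-Y(Xm)-[X,Y]m\bigr)\otimes 1$ and nothing else, and likewise so that the $\sigma(Y)$-terms in the inductive step cancel in pairs rather than accumulate. There is no conceptual difficulty — the statement is a direct translation of the fact that $M$ is a genuine Lie $\gfg$-module precisely when the Chevalley–Eilenberg differential $\pa^M$ squares to zero — but I would be careful to verify the Leibniz rule (\ref{A5partial}) and the commutation $[\sigma(Y),\pa^M]=0$ explicitly before assembling the induction, since everything downstream rests on them.
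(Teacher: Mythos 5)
Your proposal follows the paper's argument essentially verbatim: the same degree-$2$ computation $\pa^M\pa^M(m\otimes X\wedge Y)$ for the forward direction, and for the converse the same induction on $p$ via the Leibniz-type identity (\ref{A5partial}) together with the commutation $\sigma(Y)\pa^M=\pa^M\sigma(Y)$, which the paper establishes exactly as you describe by combining the module condition (\ref{A5action}) (for the $\Gamma$-part, its (\ref{A5equiv})) with (\ref{A1action}) (for the $\pa$-part). The only quibble is an overall sign in the degree-$2$ computation --- one gets $[X,Y]m - X(Ym) + Y(Xm)$ rather than its negative --- which does not affect the equivalence.
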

\begin{proof}
For $X, Y \in \gfg$ and $m \in M$, we have
$$
\pa\pa(m\otimes X\wedge y) = [X, Y]m - X(Ym) + Y(Xm).
$$
Hence $\pa^M\pa^M = 0$ implies the condition (\ref{A5action}). \par
Assume the condition (\ref{A5action}).
Then it is easy to show
\begin{equation}
\sigma(Y)\Gamma(m\otimes X_1\wedge\cdots\wedge X_p) 
= \Gamma(\sigma(Y)(m\otimes X_1\wedge\cdots\wedge X_p))
\label{A5equiv}
\end{equation}
for any $m \in M$ and $Y, X_i \in \gfg$.  
From this formula and (\ref{A1action}) follows 
\begin{equation}
\sigma(Y)\pa(m\otimes \xi) = \pa(\sigma(Y)(m\otimes\xi))
\label{A5comm}\end{equation}
for any $m \in X$, $Y \in \gfg$ and $\xi \in \Lambda^p\gfg$. 
This proves $\pa\pa = 0: 
M\otimes \Lambda^p\gfg \to M\otimes \Lambda^{p-2}\gfg$ by induction on $p
\geq 2$.  In the case $p=2$, $\pa\pa = 0$ is equivalent to the condition
(\ref{A5action}). 
Assume $\pa\pa = 0: M\otimes \Lambda^p\gfg \to
M\otimes \Lambda^{p-2}\gfg$ for $p\geq 2$. 
Then, using (\ref{A5partial}) and (\ref{A5comm})
for $m \in M$, $\xi \in \Lambda^p\gfg$ and $Y \in\gfg$, we compute
\begin{eqnarray*}
&& \pa\pa(m\otimes\xi\wedge Y)
= \pa(\pa(m\otimes\xi)\wedge Y + (-1)^{p+1}\sigma(Y)(m\otimes\xi))\\
&=& \pa\pa(m\otimes\xi)\wedge Y + (-1)^p\sigma(Y)\pa(m\otimes\xi)
+ (-1)^{p+1}\pa(\sigma(Y)(m\otimes\xi))\\
&=& \pa\pa(m\otimes\xi)\wedge Y = 0
\end{eqnarray*}
by the inductive assumption. This proves the lemma. 
\end{proof}

The pair $(M, \sigma)$ is called a {\bf left $\gfg$-module} if
the map 
$\sigma$ satisfies the condition (\ref{A5action}). 
Then the $p$-th homology group of the chain complex 
$M\otimes\Lambda^*\gfg = \{M\otimes\Lambda^p\gfg, \pa\}_{p\geq0}$ is
denoted by 
$$
H_p(\gfg; M) = H_p(M\otimes\Lambda^*\gfg)
$$
and called the $p$-th homology group of the Lie algebra $\gfg$
with values in $M$. See, for example, \cite{CE}. \par
If we define $\overline{\sigma}: M\otimes \gfg \to M$ by 
$\overline{\sigma}(m\otimes X) = -Xm$ and the condition 
(\ref{A5action}) holds for $\sigma$, then the pair 
$(M, \overline{\sigma})$ is called a {\bf right $\gfg$-module}.
By the identification (\ref{A0wedge}) we have 
\begin{equation}
\Gamma_\sigma(m\otimes Y_1\wedge Y_2) = (\overline{\sigma}\otimes
1_\gfg)(m\otimes Y_1\wedge Y_2)
\label{A5gamma}
\end{equation}
for any $m \in M$ and $Y_1, Y_2 \in \gfg$.
\par

\subsection{Comodules}

Next let $(\gfg, \delta)$ be a Lie coalgebra, and $M$ a
$\mathbb{Q}$-linear space equipped with a $\mathbb{Q}$-linear map 
$\mu: M \to M\otimes\gfg$. We define a $\mathbb{Q}$-linear map 
$d^M = d: M\otimes\Lambda^p\gfg \to M\otimes\Lambda^{p+1}\gfg$, 
$p \geq 0$, by 
$$
d(m\otimes \xi) := \mu(m)\wedge\xi + (-1)^pm\otimes d\xi
$$
for $m \in M$ and $\xi \in\Lambda^p\gfg$. 
Here $d: \Lambda^p\gfg \to \Lambda^{p+1}\gfg$ 
is the operator introduced in \S\ref{LCA}. 
If $p=0$, then $d=\mu: M\to M\otimes\gfg$. From the definition 
and the formula (\ref{A2wedge}) follows
\begin{equation}
d(m\otimes\xi\wedge\eta) = d(m\otimes\xi)\wedge \eta +
(-1)^p(m\otimes\xi)\wedge(d\eta)
\label{A6wedge}
\end{equation}
for any $m \in M$, $\xi \in \Lambda^p\gfg$ and $\eta \in \Lambda^q\gfg$.

\begin{lem}\label{A6dd=0}
We have $d^Md^M=0: M\otimes\Lambda^*\gfg \to M\otimes\Lambda^*\gfg$, if
and only if the following diagram commutes
\begin{equation}
\begin{CD}
M @>{\mu}>> M\otimes\gfg\\
@V{\mu}VV @V{1_M\otimes\delta}VV\\
M\otimes\gfg @>{(1_M\otimes(1-T))(\mu\otimes
1_\gfg)}>> M\otimes\gfg\otimes\gfg
\end{CD}
\label{A6coaction}
\end{equation}
\end{lem}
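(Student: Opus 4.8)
The plan is to reproduce the proof of Lemma~\ref{A2dd=0}, replacing $\Lambda^*\gfg$ by $M\otimes\Lambda^*\gfg$ throughout. First I would compute $d^Md^M$ on the bottom piece $M=M\otimes\Lambda^0\gfg$; this simultaneously gives the ``only if'' direction and shows that the obstruction to $d^Md^M=0$ at degree $0$ is exactly the defect of the square (\ref{A6coaction}). Then, assuming (\ref{A6coaction}), I would propagate $d^Md^M=0$ up the degree filtration by induction, using the Leibniz identity (\ref{A6wedge}) together with the fact that $dd=0$ already holds on $\Lambda^*\gfg$ (because $(\gfg,\delta)$ is a Lie coalgebra, by Lemma~\ref{A2dd=0}).

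For the base case, $d^M\vert_{M\otimes\Lambda^0\gfg}=\mu$. Writing $\mu(m)=\sum_i m_i\otimes X_i$ and applying $d^M$ once more, the definition of $d^M$ together with $dX_i=-\delta X_i$ (from \S\ref{LCA}) yields $d^Md^M(m)=\sum_i\mu(m_i)\wedge X_i-\sum_i m_i\otimes\delta X_i$. Unwinding the identification $\wedge=(1-T)$ of (\ref{A0wedge}), the first sum is $(1_M\otimes(1-T))(\mu\otimes 1_\gfg)\mu(m)$ and the second is $(1_M\otimes\delta)\mu(m)$; so $d^Md^M\vert_M$ is the difference of the two composites of (\ref{A6coaction}) --- the bimodule analogue of the identity $dd=N(\delta\otimes1)\delta$ recorded in (\ref{A2dd}). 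Hence $d^Md^M\vert_M=0$ is equivalent to the commutativity of (\ref{A6coaction}); in particular $d^Md^M=0$ implies (\ref{A6coaction}).

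For the converse, assume (\ref{A6coaction}), so that $d^Md^M$ kills $M\otimes\Lambda^0\gfg$, and induct on $p$: suppose $d^Md^M=0$ on $M\otimes\Lambda^p\gfg$. Since $M\otimes\Lambda^{p+1}\gfg$ is spanned by elements $(m\otimes\xi)\wedge Y$ with $\xi\in\Lambda^p\gfg$ and $Y\in\gfg$, it suffices to evaluate $d^Md^M$ there. Expanding $d^M((m\otimes\xi)\wedge Y)$ by (\ref{A6wedge}) and then applying $d^M$ again, using (\ref{A6wedge}) on each of the two summands and the vanishing $d(dY)=0$ on $\Lambda^*\gfg$, one finds that the two ``cross terms'' $d^M(m\otimes\xi)\wedge dY$ appear with opposite signs and cancel, leaving $d^Md^M((m\otimes\xi)\wedge Y)=(d^Md^M(m\otimes\xi))\wedge Y$, which is $0$ by the inductive hypothesis. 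This closes the induction and proves the ``if'' direction.

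There is no real obstacle here: the argument is a transcription of Lemma~\ref{A2dd=0}, and --- as remarked at the end of \S\ref{LBA} --- it goes through unchanged in the complete/filtered setting with $\widehat\otimes$ in place of $\otimes$. The one place to be careful is the base-case sign chase: one must combine $dX=-\delta X$ with the convention $\wedge=(1-T)$ of (\ref{A0wedge}) so that $d^Md^M\vert_M$ comes out as the difference, rather than the sum, of the two composites of (\ref{A6coaction}); once that is pinned down, the rest is routine.
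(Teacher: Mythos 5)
Your proposal is correct and follows essentially the same route as the paper's own proof: the base case identifies $d^Md^M\vert_M$ with the difference of the two composites of (\ref{A6coaction}) via $\wedge=(1-T)$ and $d=-\delta$, and the converse is the same induction on $p$ using the Leibniz rule (\ref{A6wedge}), the cancellation of the cross terms, and $dd=0$ on $\Lambda^*\gfg$ from the coJacobi identity. Your closing caution about the sign is apt: the displayed definition of $d^M$ carries a factor $(-1)^p$ that is compatible with (\ref{A6wedge}) and with the operator identity $d^M=(1_M\otimes(1-T))(\mu\otimes 1_\gfg)-1_M\otimes\delta$ only when the base-case computation is resolved exactly as you resolve it, yielding the difference rather than the sum of the two composites.
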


\begin{proof} By (\ref{A0wedge}) we have
$$
d^M = (1_M\otimes(1-T))(\mu\otimes 1_\gfg) - 1_M\otimes\delta. 
$$
Here it should be remarked $d = -\delta: \gfg \to \gfg\otimes\gfg$. 
Hence the commutativity of the diagram (\ref{A6coaction}) is 
equivalent to $d^Md^M = 0$ on $M = M\otimes\Lambda^0\gfg$. 
In particular, $d^Md^M = 0$ implies the commutativity of the diagram
(\ref{A6coaction}). \par
Assume the diagram (\ref{A6coaction}) commutes. 
We prove $dd = 0: 
M\otimes \Lambda^p\gfg \to M\otimes \Lambda^{p+2}\gfg$ by induction on $p
\geq 0$.  In the case $p=0$, $dd = 0$ is equivalent 
to the commutativity of the diagram (\ref{A6coaction}). 
Assume $dd = 0:
M\otimes\Lambda^p\gfg \to M\otimes\Lambda^{p+2}\gfg$ for $p
\geq 0$.  Then, for $m\in M$, $\xi \in \Lambda^p\gfg$ and $Y \in\gfg$, 
we have 
$dd(m\otimes \xi\wedge Y) = d(d(m\otimes\xi)\wedge Y +
(-1)^pm\otimes\xi\wedge dY)  = dd(m\otimes \xi)\wedge Y +
(-1)^{p+1}d(m\otimes\xi)\wedge dY + (-1)^pd(m\otimes\xi)\wedge dY
+m\otimes\xi\wedge ddY = dd(m\otimes\xi)\wedge Y =
0$  by the inductive assumption. This proves the lemma.  
\end{proof}

The pair $(M, \mu)$ is called a {\bf right $\gfg$-comodule} if the
diagram (\ref{A6coaction}) commutes. 
Then the $p$-th cohomology group of the cochain complex 
$M\otimes\Lambda^*\gfg = \{M\otimes\Lambda^p\gfg, d\}_{p\geq0}$ 
is denoted by 
$$
H^p(\gfg; M) = H^p(M\otimes\Lambda^*\gfg)
$$
and called the $p$-th cohomology group of the Lie coalgebra $\gfg$
with values in $M$. 
In view of the formula (\ref{A6wedge}), $H^*(\gfg;M)$ is a graded 
right $H^*(\gfg)$-module.

\subsection{Involutive bimodules}\label{BM}

Let $\gfg$ be a Lie bialgebra, 
$(M, \overline{\sigma})$ a right $\gfg$-module,
and $(M, \mu)$ a right $\gfg$-comodule
with the same underlying vector space $M$. 
As in \S\ref{LBA}, 
we look at the operator $d^M\pa^M + \pa^Md^M: M\otimes\Lambda^p\gfg \to 
M\otimes\Lambda^p\gfg$ for $p \geq 0$. 
In \cite{KK4} Kuno and the author introduced {\bf the compatibility condition}
\begin{equation}
\forall m \in M, \forall Y \in \gfg, \quad
\sigma(Y)(dm) - d(Ym) = -\Gamma_\sigma(m\otimes dY),
\label{A6compatible}
\end{equation}
(or equivalently
\begin{equation}
\forall m \in M, \forall Y \in \gfg, \quad
\sigma(Y)(\mu (m)) - \mu(Ym) -(\overline{\sigma}\otimes
1_\gfg)(1_M\otimes\delta)(m\otimes Y) = 0,
\label{A6compatible+}
\end{equation}
) and {\bf the involutivity}
\begin{equation}
\overline{\sigma}\mu = 0: M \to M.
\label{A6involutive}
\end{equation}

\begin{lem}\label{A7axiom}
Let $\gfg$ be an involutive Lie bialgebra. Then 
we have $d^M\pa^M + \pa^Md^M = 0: M\otimes\Lambda^p\gfg \to 
M\otimes\Lambda^p\gfg$ for $p=0$ and $1$, if and only if
$\overline{\sigma}$ and $\mu$ satisfy {the compatibility condition} 
and {the involutivity}. 
\end{lem}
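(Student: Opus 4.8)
\medskip\noindent\textbf{Proof plan.}

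This is the $M$-coefficient counterpart of Lemma \ref{A3axiom}, and the plan is to run the same argument, treating $p=0$ and $p=1$ separately (no induction is needed, since only these two degrees occur in the statement). For $p=0$ one has $M\otimes\Lambda^0\gfg=M$; the operator $\pa^M$ maps $M$ into $M\otimes\Lambda^{-1}\gfg=0$, so $\pa^M\vert_M=0$, while $d^M\vert_M=\mu$. Hence $(d^M\pa^M+\pa^Md^M)\vert_M=\pa^M\circ\mu$, and reading off the definitions of $\Gamma$ and $\overline\sigma$ (compare (\ref{A5gamma})) one gets $\pa^M(m'\otimes Z)=\Gamma(m'\otimes Z)=-Zm'=\overline\sigma(m'\otimes Z)$ for $m'\in M$, $Z\in\gfg$. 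Therefore $(d^M\pa^M+\pa^Md^M)\vert_M=\overline\sigma\mu$, and the identity $d^M\pa^M+\pa^Md^M=0$ on $M\otimes\Lambda^0\gfg$ is literally the involutivity (\ref{A6involutive}).

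Next I would assume the involutivity (\ref{A6involutive}) and compute $(d^M\pa^M+\pa^Md^M)(m\otimes Y)$ for $m\in M$ and $Y\in\gfg$. Since $\pa Y=0$ in $\Lambda^*\gfg$, we have $\pa^M(m\otimes Y)=\Gamma(m\otimes Y)=-Ym$, whence $d^M\pa^M(m\otimes Y)=-\mu(Ym)$. For the other term, $d^M(m\otimes Y)=\mu(m)\wedge Y+m\otimes\delta Y$ (using $dX=-\delta X$), and I would expand $\pa^M$ of it by the Leibniz rule (\ref{A5partial}). The contribution of $\mu(m)\wedge Y$ is $\pa^M(\mu(m))\wedge Y+\sigma(Y)(\mu(m))$, and its first summand vanishes because $\pa^M(\mu(m))=\Gamma(\mu(m))=\overline\sigma\mu(m)=0$ by (\ref{A6involutive}). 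The contribution of $m\otimes\delta Y$ is $\Gamma(m\otimes\delta Y)+m\otimes\pa(\delta Y)$, where $\Gamma(m\otimes\delta Y)=(\overline\sigma\otimes1_\gfg)(1_M\otimes\delta)(m\otimes Y)$ by (\ref{A5gamma}), and $\pa(\delta Y)=0$ because $\pa$ applied to $\delta Y\in\Lambda^2\gfg$ is a scalar multiple of $\nabla\delta Y$, which vanishes by the involutivity (\ref{A3involutive}) of the Lie bialgebra $\gfg$. Assembling the surviving terms, $(d^M\pa^M+\pa^Md^M)(m\otimes Y)$ becomes exactly the left-hand side of the compatibility condition (\ref{A6compatible}) (equivalently (\ref{A6compatible+})); hence, given the involutivity, the identity on $M\otimes\Lambda^1\gfg$ holds for all $m$ and $Y$ precisely when the compatibility condition holds.

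Putting the two cases together proves the lemma: $d^M\pa^M+\pa^Md^M$ vanishes on $M\otimes\Lambda^0\gfg$ if and only if the involutivity holds, and --- granting the involutivity --- on $M\otimes\Lambda^1\gfg$ if and only if the compatibility condition holds. The main obstacle I anticipate is purely a matter of sign bookkeeping in the $p=1$ step: one has to track the $(-1)^p$'s in the definitions of $d^M$ and $\pa^M$, the sign $dX=-\delta X$, and the identification (\ref{A5gamma}) of $\Gamma$ with $\overline\sigma$, and verify that they combine to reproduce the compatibility condition of \cite{KK4} on the nose. A second, more conceptual point to watch is that the term $m\otimes\pa(\delta Y)$ is eliminated using $\nabla\delta=0$ rather than the Lie bialgebra compatibility of $\gfg$; this is precisely why the lemma requires $\gfg$ to be an \emph{involutive} Lie bialgebra, not merely a Lie bialgebra.
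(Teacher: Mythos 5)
Your overall strategy is exactly the paper's: a direct computation in degrees $p=0$ and $p=1$, with the $p=0$ identity $\pa^M\mu=\overline{\sigma}\mu$ giving the involutivity (\ref{A6involutive}), and the two terms $\pa^M(\mu(m))\wedge Y$ and $m\otimes\pa dY$ killed by the involutivity of $M$ and of $\gfg$ respectively. All of that is correct and matches the paper. The problem is precisely the sign bookkeeping you deferred to the end: under the reading of the definition of $d^M$ that you adopt, the assembled expression is \emph{not} the left-hand side of (\ref{A6compatible+}). You take $d^M(m\otimes Y)=\mu(m)\wedge Y+(-1)^1m\otimes dY=\mu(m)\wedge Y+m\otimes\delta Y$, so the comodule term contributes $+\Gamma_\sigma(m\otimes\delta Y)=+(\overline{\sigma}\otimes 1_\gfg)(1_M\otimes\delta)(m\otimes Y)$, and your total is $\sigma(Y)(\mu(m))-\mu(Ym)+(\overline{\sigma}\otimes 1_\gfg)(1_M\otimes\delta)(m\otimes Y)$. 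The compatibility condition (\ref{A6compatible+}) carries a \emph{minus} in front of that last term, so the vanishing of your expression is a genuinely different condition (it is (\ref{A6compatible}) with $-\Gamma_\sigma(m\otimes dY)$ replaced by $+\Gamma_\sigma(m\otimes dY)$), and the claimed equivalence does not follow as written.

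The source of the trouble is not your computation but the displayed definition of $d^M$: the factor $(-1)^p$ there is inconsistent with the rest of \S2.2 --- it breaks the Leibniz rule (\ref{A6wedge}) and contradicts the formula $d^M=(1_M\otimes(1-T))(\mu\otimes 1_\gfg)-1_M\otimes\delta$ used in the proof of Lemma \ref{A6dd=0}. The operative definition, used throughout the paper's proofs, is $d^M(m\otimes\xi)=\mu(m)\wedge\xi+m\otimes d\xi$. With that convention $d^M(m\otimes Y)=\mu(m)\wedge Y+m\otimes dY$, the comodule term contributes $+\Gamma_\sigma(m\otimes dY)=-\Gamma_\sigma(m\otimes\delta Y)$, and the total becomes $-d(Ym)+\sigma(Y)(dm)+\Gamma_\sigma(m\otimes dY)$, whose vanishing for all $m$ and $Y$ is literally (\ref{A6compatible}) --- which is how the paper concludes. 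So to repair your write-up you must first fix (or at least flag) the sign in the definition of $d^M$ and then redo the $p=1$ assembly; as it stands, the final identification with the compatibility condition fails by a sign.
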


\begin{proof}
From the definition, the involutivity is equivalent to 
$d\pa+\pa d = 0$ for $p=0$. Assume the involutivity. 
Then, for $m \in \gfg$ and $Y \in \gfg$, we have 
$(d\pa+\pa d)(m\otimes Y) 
= -d(Ym) +\pa((dm)\wedge Y + m\otimes dY)
= -d(Ym) + (\pa dm)\wedge Y + \sigma(Y)(dm)
+\Gamma(m\otimes dY) + m\otimes\pa dY
= -d(Ym) + \sigma(Y)(dm)+\Gamma(m\otimes dY)$.
Hence $d\pa+\pa d = 0$ for $p=1$ is equivalent 
to the compatibility condition. This proves the lemma.
\end{proof}

For a Lie bialgebra $\gfg$, $M$ is called a {\bf 
right $\gfg$-bimodule} if the compatibility condition holds. A right
$\gfg$-bimodule $M$ is called  {\bf involutive}, if it satisfies the
involutivity.
\par

\begin{prop}\label{A7dd+dd}
If $\gfg$ is a Lie bialgebra, and $M$ a right
$\gfg$-bimodule, then we have 
\begin{eqnarray*}
&&(d\pa+\pa d)(m\otimes X_1\wedge\cdots\wedge X_p) \\
&=& (\pa dm)\otimes X_1\wedge\cdots\wedge X_p
 + m\otimes \sum^p_{i=1} X_1\wedge X_{i-1}\wedge (\pa dX_i)\wedge
X_{i+1}\wedge\cdots\wedge X_p
\end{eqnarray*}
for $m \in M$ and $X_i \in \gfg$. 
\end{prop}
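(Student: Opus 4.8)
The plan is to follow the proof of Proposition \ref{A3dd+dd} almost verbatim, replacing $\Lambda^*\gfg$ by $M\otimes\Lambda^*\gfg$, $\pa$ by $\pa^M$ and $d$ by $d^M$, and carrying out an induction on $p$ whose inductive step rests on a module analogue of Lemma \ref{A3partial}.

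First I would establish, for a Lie bialgebra $\gfg$ and a right $\gfg$-bimodule $M$, the identity
$$
\pa^M(\zeta\wedge dY) - (\pa^M\zeta)\wedge dY - (-1)^p\zeta\wedge\pa dY
= d^M\sigma(Y)\zeta - \sigma(Y)d^M\zeta
$$
for $\zeta \in M\otimes\Lambda^p\gfg$ and $Y \in \gfg$, which is the bimodule counterpart of Lemma \ref{A3partial}. As there, it suffices to treat $\zeta = m\otimes X_1\wedge\cdots\wedge X_p$. Expanding $d^M\sigma(Y)\zeta - \sigma(Y)d^M\zeta$ by the Leibniz rules, the contribution of each $\gfg$-slot is $d[Y,X_i] - \sigma(Y)dX_i = -\sigma(X_i)dY$ by the Lie bialgebra compatibility (\ref{A3compatible}) (exactly as in Lemma \ref{A3partial}), while the contribution of the $M$-slot is $d(Ym) - \sigma(Y)(dm) = \Gamma_\sigma(m\otimes dY)$ by the bimodule compatibility (\ref{A6compatible}). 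Matching the resulting sum against $\pa^M(\zeta\wedge dY) - (\pa^M\zeta)\wedge dY - (-1)^p\zeta\wedge\pa dY$ by means of (\ref{A5Gpartial}), (\ref{A5partial}), Lemma \ref{A1wedge}, and the identity (\ref{A5gamma}) expressing $\Gamma_\sigma$ through $\overline{\sigma}$, one obtains the displayed formula. I expect this bookkeeping --- in particular checking that the $\Gamma$-part of $\pa^M(\zeta\wedge dY)$ absorbs the $M$-slot term $\Gamma_\sigma(m\otimes dY)$ with the correct sign --- to be the only delicate point.

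Granting this identity, the proposition follows by induction on $p$. For $p=0$ the left-hand side is $(d^M\pa^M + \pa^Md^M)(m) = \pa^Md^M m = \pa^M\mu(m)$, which is exactly $\pa dm$ (the empty-sum right-hand side). For the step from $p$ to $p+1$ I would set $\zeta = m\otimes X_1\wedge\cdots\wedge X_p$ and $Y = X_{p+1}$ and expand $(d^M\pa^M + \pa^Md^M)(\zeta\wedge Y)$ using (\ref{A5partial}) for $\pa^M(\zeta\wedge Y)$ and (\ref{A6wedge}) for $d^M(\zeta\wedge Y)$, then apply $d^M$ and $\pa^M$ and use (\ref{A5partial}), (\ref{A6wedge}) once more, precisely as in the proof of Proposition \ref{A3dd+dd}. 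Collecting terms, everything except $((d^M\pa^M + \pa^Md^M)\zeta)\wedge Y$ assembles into the combination that the above identity rewrites as $\zeta\wedge\pa dY = m\otimes X_1\wedge\cdots\wedge X_p\wedge\pa dX_{p+1}$; feeding in the inductive formula for $(d^M\pa^M + \pa^Md^M)\zeta$ then yields exactly the asserted formula at level $p+1$.

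Finally I would note, as with Proposition \ref{A3dd+dd}, that only the two compatibility conditions (\ref{A3compatible}) and (\ref{A6compatible}) are used --- neither involutivity --- so that the corollary that $d^M\pa^M + \pa^Md^M$ vanishes identically if and only if the right $\gfg$-bimodule $M$ is involutive (over an involutive $\gfg$) drops out immediately afterwards, in analogy with Corollary \ref{A3dd=0}.
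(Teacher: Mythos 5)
Your proposal is correct and follows essentially the same route as the paper: the paper's proof is exactly an induction on $p$ whose inductive step reduces, via (\ref{A5partial}), (\ref{A5Gpartial}) and (\ref{A6wedge}), to the combination you isolate as a ``bimodule counterpart of Lemma \ref{A3partial}'', and disposes of it by the same two ingredients --- Lemma \ref{A3partial} for the $\gfg$-slots and the compatibility condition (\ref{A6compatible}) for the $M$-slot. The only difference is organizational (you extract that combination as a standalone identity, while the paper verifies it inline within the displayed computation), and your closing remark that involutivity is never used, so that Corollary \ref{A7dd=0} follows at once, matches the paper as well.
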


\begin{proof}
It is clear for $p=0$. Assume it holds for $p \geq 0$.
Denote $\xi = X_1\wedge\cdots\wedge X_p$ and $Y = X_{p+1}$. 
We have 
$\sigma(Y)d(m\otimes\xi) - (dm)\wedge\sigma(Y)\xi -(Ym)\otimes d\xi
= (\sigma(Y)dm)\wedge\xi + m\otimes\sigma(Y)d\xi$.
So, by (\ref{A5partial}), (\ref{A5Gpartial}) and (\ref{A6wedge}), 
we compute
\begin{eqnarray*}
&&(d\pa+\pa d)(m\otimes\xi\wedge Y)\\
&=& (d\pa+\pa d)(m\otimes\xi)\wedge Y\\
&& +(-1)^pm\otimes\left(-(\pa\xi)\wedge dY - d\sigma(Y)(\xi)
+ \sigma(Y)d\xi + \pa(\xi\wedge dY)\right)\\
&& +(-1)^p\left(-(dYm) + \sigma(Y)dm + \Gamma(m\otimes dY)\right)\wedge\xi
\end{eqnarray*}
Hence, by Lemma \ref{A3partial} and (\ref{A6compatible}), we obtain
$$
(d\pa+\pa d)(m\otimes\xi\wedge Y) =
(d\pa+\pa d)(m\otimes\xi)\wedge Y 
+ m\otimes\xi\wedge\pa dY.
$$
This proceeds the induction.
\end{proof}

\begin{cor}\label{A7dd=0}
Let $\gfg$ be an involutive Lie bialgebra, and $M$ a right
$\gfg$-bimodule. Then we have $d^M\pa^M+\pa^Md^M = 0: 
M\otimes\Lambda^p\gfg \to M\otimes\Lambda^p\gfg$ for any 
$p \geq 0$, if and only if $M$ is involutive.
\end{cor}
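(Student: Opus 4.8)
The plan is to deduce Corollary \ref{A7dd=0} from Proposition \ref{A7dd+dd} in exactly the way Corollary \ref{A3dd=0} was deduced from Proposition \ref{A3dd+dd}. Proposition \ref{A7dd+dd} expresses $(d^M\pa^M+\pa^Md^M)(m\otimes X_1\wedge\cdots\wedge X_p)$ as the sum of $(\pa^Md^Mm)\otimes X_1\wedge\cdots\wedge X_p$ and $m$ tensored with $\sum_{i=1}^p X_1\wedge\cdots\wedge(\pa dX_i)\wedge\cdots\wedge X_p$, so the task reduces to understanding the two ``local'' contributions $\pa^Md^Mm\in M$ and $\pa dX_i\in\gfg$, and in particular to deciding when they vanish.

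First I would compute $\pa dX_i$. Since $dX_i=-\delta X_i$ lies in $\Lambda^2\gfg$ and the operator $\pa$ of \S\ref{LA}, restricted to $\Lambda^2\gfg$, is (up to the universal scalar dictated by the wedge-versus-tensor convention) the bracket $-\nabla$, one gets that $\pa dX_i$ is a nonzero multiple of $\nabla\delta X_i$. Hence the standing hypothesis that $\gfg$ is involutive, i.e.\ (\ref{A3involutive}), kills every summand of the second term. Next I would compute $\pa^Md^Mm$. By definition $d^Mm=\mu(m)\in M\otimes\gfg=M\otimes\Lambda^1\gfg$, and on $M\otimes\Lambda^1\gfg$ the operator $\pa^M$ reduces to $\Gamma_\sigma$ because $\pa$ annihilates $\Lambda^1\gfg$; moreover $\Gamma_\sigma(m'\otimes X)=-Xm'=\overline\sigma(m'\otimes X)$ straight from the definitions, so after the identification $M\otimes\Lambda^0\gfg=M$ we obtain $\pa^Md^Mm=\overline\sigma\mu(m)$. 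Consequently the first term vanishes for all $m$ if and only if $\overline\sigma\mu=0$, that is, if and only if $M$ is involutive in the sense of (\ref{A6involutive}). Putting the two computations together: when $\gfg$ and $M$ are both involutive, Proposition \ref{A7dd+dd} gives $d^M\pa^M+\pa^Md^M=0$ on $M\otimes\Lambda^p\gfg$ for every $p\ge0$, which is one implication.

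For the converse I would merely restrict the hypothesis to $p=0$. On $M\otimes\Lambda^0\gfg$ the operator $\pa^M$ is zero, so $(d^M\pa^M+\pa^Md^M)(m)=\pa^Md^Mm=\overline\sigma\mu(m)$ by the computation above; this was already recorded in the proof of Lemma \ref{A7axiom} as the equivalence between $d^M\pa^M+\pa^Md^M=0$ in degree $0$ and the involutivity of $M$. Hence if $d^M\pa^M+\pa^Md^M$ vanishes on $M\otimes\Lambda^p\gfg$ for every $p\ge0$, it vanishes for $p=0$, and $M$ is involutive. I do not expect any real obstacle here: the only points needing care are the sign and normalization bookkeeping behind ``$\pa|_{\Lambda^2\gfg}$ is a multiple of $-\nabla$'' and ``$\pa^M|_{M\otimes\Lambda^1\gfg}$ is $\overline\sigma$'', and both are immediate from the definitions already set up in \S\ref{LA} and in the preceding subsections of \S2. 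Once assembled this completes the proof of Proposition \ref{0bimod}, and it shows that $d^M$ descends to a coboundary operator $d(\delta,\mu)\colon H_p(\gfg;M)\to H_{p+1}(\gfg;M)$ on the homology of $\gfg$ with values in $M$.
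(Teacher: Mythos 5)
Your proposal is correct and follows exactly the route the paper intends: the corollary is an immediate consequence of Proposition \ref{A7dd+dd}, with the second sum killed by the involutivity of $\gfg$ (since $\pa dX_i$ is a nonzero multiple of $\nabla\delta X_i$) and the first term identified with $\overline{\sigma}\mu(m)$, while the converse follows by restricting to $p=0$ as in the proof of Lemma \ref{A7axiom}. Your sign and normalization checks for $\pa|_{\Lambda^2\gfg}$ and $\pa^M|_{M\otimes\Lambda^1\gfg}$ are accurate, so there is nothing to add.
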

This completes the proof of Proposition \ref{0bimod}.
\par
If $\gfg$ is an involutive Lie bialgebra and $M$ an involutive
right $\gfg$-bimodule, then the operator $d^M$ induces the cobounday
operator 
$$
d= d(\delta, \mu): H_p(\gfg; M) \to H_{p+1}(\gfg; M), \quad
[u]\mapsto [d^Mu]
$$
on the homology group $H_*(\gfg; M)$. 
Hence one can define the cohomology of the homology 
$H^*(H_*(\gfg; M))$.

\section{Drinfel'd's deformation}

Let $\gfg$ be a Lie algebra equipped with a Lie cobracket $\delta: \gfg \to
\Lambda^2\gfg$. As was pointed out by Drinfel'd \cite{D}, 
the compatibility is equivalent to that $\delta$ is a $1$-cocycle 
of the Lie algebra $\gfg$ with values in $\Lambda^2\gfg$, 
and so one can deform the cobracket $\delta$ by a $1$-coboundary 
of $\gfg$ with values in $\Lambda^2\gfg$ satisfying 
some condition which assures the new cobracket the coJacobi identity. 
Here $\gfg$ acts on $\Lambda^2\gfg$ by the map $\sigma: \gfg\otimes
\Lambda^2\gfg \to \Lambda^2\gfg$. The subspace $\mathcal{N}(\gfg)
:= \Ker(\nabla: \Lambda^2\gfg \to \gfg)$ is a $\gfg$-submodule. 
The involutivity means $\delta(\gfg) \subset \mathcal{N}(\gfg)$. 
Hence we may regard the set of involutive Lie bialgebra structures on
the underlying  Lie algebra $\gfg$ as a subset of 
$Z^1(\gfg; \mathcal{N}(\gfg))$, the set of $1$-cocycles
of $\gfg$ with values in $\mathcal{N}(\gfg)$.
In particular, we can say two cobrackets $\delta$ and $\delta'$, which define intolutive Lie bialgebra structures on $\gfg$, are {\bf cohomologous}
to each other
if and only if $[\delta] = [\delta'] \in H^1(\gfg; \mathcal{N}(\gfg))$. 
Similar observations hold for a involutive bimodule structure 
on a $\gfg$-module $M$. 
\par
We introduced the coboundary operators 
$d(\delta)$ and $d(\delta, \mu)$ 
on the homology group $H_*(\gfg)$ and $H_*(\gfg; M)$
in the previous sections. 
In this section, we prove that these operators 
stay invariant under Drinfel'd's deformation. 

\subsection{Deformation of a cobracket}

Let $\gfg$ be a Lie algebra. 

\begin{lem}\label{A4coboundary}
If $\delta$ and $\delta' \in Z^1(\gfg; \mathcal{N}(\gfg))$ 
are involutive Lie bialgebra structures on $\gfg$, and 
cohomologous to each other, then 
the induced coboundary operators $d(\delta)$ and $d(\delta')$ on the
homology $H_*(\gfg)$ coincide with each other
$$
d(\delta)=d(\delta'): H_*(\gfg) \to H_{*+1}(\gfg).
$$ 
\end{lem}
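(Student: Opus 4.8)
The plan is to write $\delta' = \delta + \partial_{\text{Lie}}(r)$ for some $r \in \Lambda^2\gfg$ with $\sigma(Y)r$-type corrections, where $\partial_{\text{Lie}}$ denotes the Lie algebra coboundary on cochains with values in the $\gfg$-module $\mathcal{N}(\gfg)$; concretely $\delta'(X) - \delta(X) = \sigma(X)(r)$ for all $X \in \gfg$. Let $d = d(\delta)$ and $d' = d(\delta')$ be the two extensions to $\Lambda^*\gfg$ introduced in \S\ref{LCA}. The key is to produce an explicit chain homotopy: I claim that on $\Lambda^*\gfg$ one has
$$
d' - d = \pa h + h \pa
$$
for a suitable degree-$(+1)$ operator $h$ built from wedging with $r$. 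The natural candidate is $h(\xi) := r \wedge \xi$, or a signed variant thereof, mimicking the way $d$ itself is "wedging with $-\delta X$ in the first slot."

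First I would verify the degree-one identity $d'X - dX = -\sigma(X)(r) = -(\pa(r\wedge X) - (\pa r)\wedge X - \ldots)$ for $X \in \gfg$, using equation (\ref{A1partial}) (the $p=1$ case) to rewrite $\sigma(X)(r)$ in terms of $\pa(r\wedge X)$ and $(\pa r)\wedge X$; this pins down $h$ up to the exact term $(\pa r)\wedge(-)$, which is itself a boundary and hence harmless on homology. Next I would extend to general $\xi = X_1 \wedge \cdots \wedge X_p$ by induction on $p$, exactly paralleling the inductive structure of Proposition \ref{A3dd+dd}: write $\xi = \xi' \wedge Y$, apply the Leibniz rules (\ref{A2wedge}) for $d$ and $d'$ and (\ref{A1partial}) for $\pa$, invoke Lemma \ref{A3partial} to control the cross terms $\pa(\xi' \wedge dY)$, and use the inductive hypothesis $d'\xi' - d\xi' = (\pa h + h\pa)\xi'$. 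The deformation terms $\sigma(X_i)(r)$ entering through the difference $d' - d$ must reassemble, via Lemma \ref{A1wedge}, into $\pa(r \wedge \xi) + r \wedge \pa\xi$ up to boundaries.

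The main obstacle I anticipate is bookkeeping the signs and, more substantively, checking that the "error terms" that are not literally of the form $\pa(\text{something}) + (\text{something})\pa$ genuinely vanish rather than merely being boundaries --- that is, whether the homotopy formula holds on the nose at chain level or only up to a further correction. Because $\delta$ and $\delta'$ need not be equal as cochains, only cohomologous, the operator $r$ is only defined up to a $\gfg$-invariant element (a $1$-cocycle that is also a coboundary), and I should check that two choices of $r$ representing the same coboundary class change $h$ by an operator that is itself a chain homotopy to zero composed appropriately --- but in fact this is automatic once the homotopy formula is established, since then $d' - d$ induces zero on $H_*(\gfg)$ regardless. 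A secondary subtlety: the involutivity of both $\delta$ and $\delta'$ is used to guarantee that $d$ and $d'$ each descend to $H_*(\gfg)$ in the first place (Corollary \ref{A3dd=0}), so the statement $d(\delta) = d(\delta')$ on homology only makes sense once we know both sides are anticommuting with $\pa$; I would remark that $r$ need not take values in $\mathcal{N}(\gfg)$ --- only $\delta$ and $\delta'$ must --- so no extra hypothesis on $r$ is needed beyond $\delta' - \delta = \partial_{\text{Lie}} r$.

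If the direct chain-homotopy computation proves too unwieldy, the fallback is a cleaner abstract argument: regard $d_t := d(\delta + t\,\partial_{\text{Lie}} r)$ for $t$ a formal parameter and compute $\frac{d}{dt} d_t$ as $[\pa, -] $ of wedging with $r$; but since we work over $\mathbb{Q}$ without a priori a one-parameter family of involutive structures, I prefer the explicit homotopy, treating $t=0$ and $t=1$ directly and not relying on intermediate $\delta_t$ being involutive.
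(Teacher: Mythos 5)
Your overall strategy is the paper's: exhibit $d'-d$ as a chain homotopy built from wedging with the element implementing the coboundary. The paper confirms your hope that this holds on the nose at the chain level, with no residual correction terms: writing $E_A(u):=A\wedge u$ for $A\in\Lambda^2\gfg$, a direct computation gives
$$
(\pa E_A-E_A\pa+E_{\nabla A})(X_1\wedge\cdots\wedge X_p)
=\sum_{i=1}^p(-1)^i\,\sigma(X_i)(A)\wedge X_1\wedge\overset{\hat i}\cdots\wedge X_p,
$$
and the right-hand side is exactly $(d'-d)(X_1\wedge\cdots\wedge X_p)$ once $(\delta'-\delta)(X)=\sigma(X)(A)$; so no induction through Lemma \ref{A3partial} is needed. (A minor slip: your $h$ has degree $+2$, not $+1$, and the relevant combination is the commutator $\pa E_A-E_A\pa$ rather than the anticommutator --- though either form sends cycles to boundaries, so the conclusion is unaffected.)

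The genuine problem is your closing remark that ``$r$ need not take values in $\mathcal{N}(\gfg)$ \dots\ no extra hypothesis on $r$ is needed.'' This dismisses the one place where the hypothesis $\delta,\delta'\in Z^1(\gfg;\mathcal{N}(\gfg))$ actually enters the computation. The identity above carries the third term $E_{\nabla A}$, i.e.\ wedging with the element $\nabla A\in\gfg$; by (\ref{A1partial}) this operator is not a chain map (it fails to commute with $\pa$ by the term $\sigma(\nabla A)$) and is not of the form $\pa(\cdot)\pm(\cdot)\pa$, so if $\nabla A\neq 0$ the argument does not close. It is precisely because $\delta$ and $\delta'$ are cohomologous as cocycles valued in $\mathcal{N}(\gfg)=\Ker(\nabla:\Lambda^2\gfg\to\gfg)$ that the implementing $0$-cochain $A$ can be taken inside $\mathcal{N}(\gfg)$, which kills $E_{\nabla A}$ and leaves the clean homotopy $d'-d=\pa E_A-E_A\pa$. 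Keep your construction, but take $A\in\mathcal{N}(\gfg)$ as the definition of ``cohomologous'' supplies, rather than an arbitrary $r\in\Lambda^2\gfg$.
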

\begin{proof}
For $A \in \Lambda^*\gfg$, we denote by $E_A: \Lambda^*\gfg \to 
\Lambda^*\gfg$ the multiplication by $A$, $u \mapsto A\wedge u$. 
If $A \in \Lambda^2\gfg$, then, by some straight-forward computation,
we have 
\begin{equation}
(\pa E_A-E_A\pa+E_{\nabla A})(X_1\wedge\cdots\wedge X_p) 
= \sum^p_{i=1}(-1)^i\sigma(X_i)(A)\wedge
X_1\wedge\overset{\hat i}\cdots\wedge X_p
\label{A4homotopy}\end{equation}
for any $X_i \in \gfg$.\par
We denote $d = d(\delta)$ and $d' = d(\delta')$. 
Suppose $\delta$ and $\delta'$ are cohomologous to each other. 
Then there exists some $A \in \mathcal{N}(\gfg)$ such that 
$(d - d')(X) = (\delta'-\delta)(X) = \sigma(X)(A)$ for any 
$X\in \gfg$. From (\ref{A4homotopy}) follows
$(d'-d)(X_1\wedge\cdots\wedge X_p) 
= \sum^p_{i=1}(-1)^i\sigma(X_i)(A)\wedge
X_1\wedge\overset{\hat i}\cdots\wedge X_p
= (\pa E_A-E_A\pa+E_{\nabla A})(X_1\wedge\cdots\wedge X_p)$.
Since $A \in \mathcal{N}(\gfg)$, we obtain
$d'-d = \pa E_A-E_A\pa: \Lambda^*\gfg \to \Lambda^{*+1}\gfg$.
This proves the lemma.
\end{proof}

As was pointed out by Drinfel'd \cite{D}, 
we have $H^1(\gfg; \mathcal{N}(\gfg)) = 0$ 
in the case $\gfg$ is a finite-dimensional semi-simple
Lie algebra.  
Hence, in this case, $d(\delta) = 0$ on $H_*(\gfg)$ 
for any involutive Lie bialgebra structure on $\gfg$. \par

Let $U$ be an automorphism of a topological Lie algebra $\gfg$, and 
$\delta \in Z^1(\gfg; \mathcal{N}(\gfg))$ an involutive 
Lie bialgebra structure on $\gfg$. 
Then the conjugate $U\delta := (U\otimes U)\delta U^{-1}$ 
is also an involutive Lie bialgebra strucuture on $\gfg$. 
\begin{lem}\label{A4Aexp}
Let $X \in \gfg$, and 
suppose $e^{\ad X} = \sum^\infty_{k=0}\frac{1}{k!}(\ad(X))^k
$ converges
as an automorphism  of the topological Lie algebra $\gfg$. 
Then we have $d(\delta) = d(e^{\ad X}\delta)$ on 
$H_*(\gfg)$. 
\end{lem}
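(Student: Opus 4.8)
The plan is to exhibit an element $A\in\mathcal{N}(\gfg)$ with $(e^{\ad X}\delta-\delta)(Y)=\sigma(Y)(A)$ for all $Y\in\gfg$; this says exactly that $\delta$ and $e^{\ad X}\delta$ are cohomologous in $H^1(\gfg;\mathcal{N}(\gfg))$, and then Lemma \ref{A4coboundary} gives the conclusion $d(\delta)=d(e^{\ad X}\delta)$. To produce $A$ I would interpolate: for $t\in[0,1]$ put $U_t:=e^{t\ad X}$ and
\[
\delta_t := U_t\delta = (U_t\otimes U_t)\,\delta\,U_t^{-1},
\]
so that $\delta_0=\delta$ and $\delta_1=e^{\ad X}\delta$. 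Each $\delta_t$ is an involutive Lie bialgebra structure on $\gfg$ because $U_t$ is an automorphism; in particular $\delta_t(X)\in\mathcal{N}(\gfg)$ and $\delta_t$ satisfies the compatibility condition (\ref{A3compatible}). Recall also that on $\Lambda^2\gfg$ one has $U_t\otimes U_t=e^{t\sigma(X)}$, where $\sigma(X)=\ad X\otimes 1+1\otimes\ad X$.

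Next I would differentiate in $t$. Since $\ad X$ commutes with $U_t$, one finds $\tfrac{d}{dt}(U_t\otimes U_t)=\sigma(X)(U_t\otimes U_t)$ and $\tfrac{d}{dt}U_t^{-1}=-(\ad X)U_t^{-1}$, and, using that $U_t^{-1}$ commutes with $\ad X$,
\[
\frac{d}{dt}\delta_t=\sigma(X)\,\delta_t-\delta_t\circ\ad X .
\]
Evaluating at $Y\in\gfg$ and feeding in the compatibility condition for $\delta_t$, namely $\delta_t[X,Y]=\sigma(X)(\delta_tY)-\sigma(Y)(\delta_tX)$, the two $\sigma(X)(\delta_tY)$ terms cancel and we obtain the key identity
\[
\frac{d}{dt}\delta_t(Y)=\sigma(Y)(\delta_tX).
\]
Finally, because $[X,X]=0$ we have $U_t^{-1}X=X$, hence $\delta_t(X)=(U_t\otimes U_t)(\delta X)=e^{t\sigma(X)}(\delta X)$.

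It then remains to integrate the key identity over $t\in[0,1]$, which yields, for every $Y\in\gfg$,
\[
(e^{\ad X}\delta-\delta)(Y)=\int_0^1\sigma(Y)(\delta_tX)\,dt=\sigma(Y)\Bigl(\int_0^1 e^{t\sigma(X)}(\delta X)\,dt\Bigr),
\]
so one may take $A:=\int_0^1 e^{t\sigma(X)}(\delta X)\,dt=\sum_{k\ge 0}\tfrac{1}{(k+1)!}\,\sigma(X)^k(\delta X)$. This series converges by the same hypothesis that makes $e^{\ad X}$ converge (applied to $\sigma(X)$ on $\Lambda^2\gfg$), and $A\in\mathcal{N}(\gfg)$ because $\mathcal{N}(\gfg)=\Ker(\nabla)$ is a closed $\gfg$-submodule of $\Lambda^2\gfg$ that contains $\delta X$ and is stable under $\sigma(X)$. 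Thus $\delta$ and $e^{\ad X}\delta$ differ by the coboundary of the $0$-cochain $A$, and Lemma \ref{A4coboundary} finishes the proof.

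The main obstacle is the bookkeeping in the complete topological setting: one must check that $t\mapsto\delta_t$ is differentiable with the stated derivative, that each $\delta_t$ is genuinely an involutive Lie bialgebra structure (so that $\delta_t(X)$ lands in $\mathcal{N}(\gfg)$), and that the series defining $A$ converges in $\mathcal{N}(\gfg)$. All of these are routine once the convergence hypothesis on $e^{\ad X}$ is in force; if one prefers to avoid the interpolation entirely, one can simply take $A=\sum_{k\ge 0}\tfrac{1}{(k+1)!}\,\sigma(X)^k(\delta X)$ from the outset and verify directly, using $U_t^{-1}X=X$ and the compatibility condition, that $(e^{\ad X}\delta-\delta)(Y)=\sigma(Y)(A)$ for all $Y\in\gfg$.
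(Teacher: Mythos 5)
Your proposal is correct, and it reduces the lemma to Lemma \ref{A4coboundary} by exhibiting exactly the same element $A=\sum_{k\ge 0}\tfrac{1}{(k+1)!}\sigma(X)^k(\delta X)$ that the paper uses; only the derivation of the key identity $(e^{\ad X}\delta-\delta)(Y)=\sigma(Y)(A)$ differs. The paper works entirely algebraically: it notes that $e^{\ad X}\delta-\delta=\sum_{k\ge1}\tfrac1{k!}X^k\delta$, where $X\cdot$ is the action of $\gfg$ on $Z^1(\gfg;\mathcal{N}(\gfg))$, observes that the cocycle condition gives $(Xc)(Z)=\sigma(Z)(c(X))$, and proves $(X^k c)(Z)=\sigma(Z)\sigma(X)^{k-1}(c(X))$ by a one-line induction on $k$; summing over $k$ yields $A$ directly. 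You instead interpolate with $\delta_t=(e^{t\ad X}\otimes e^{t\ad X})\delta e^{-t\ad X}$, differentiate, cancel via the compatibility condition to get $\tfrac{d}{dt}\delta_t(Y)=\sigma(Y)(\delta_t X)$, and integrate. The two computations are the same in substance (your differentiated identity is the paper's $k=1$ step applied to $\delta_t$), but the paper's induction sidesteps the analytic bookkeeping you flag at the end --- differentiability of $t\mapsto\delta_t$, term-by-term integration, closedness of $\mathcal{N}(\gfg)=\Ker\nabla$ --- all of which is routine but must be said in the complete topological setting; your version buys a cleaner conceptual picture (a path of cohomologous cobrackets) at the cost of those checks. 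Your closing remark, that one can skip the interpolation and verify $(e^{\ad X}\delta-\delta)(Y)=\sigma(Y)(A)$ directly, is in effect the paper's proof.
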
 
\begin{proof}
The Lie algebra $\gfg$ acts on $Z^1(\gfg;\mathcal{N}(\gfg))$ 
in an obvious way. We have 
$$
(Yc)(Z) := \sigma(Y)(c(Z)) - c([Y,Z]) = \sigma(Z)(c(Y))
$$
for any $c \in Z^1(\gfg;\mathcal{N}(\gfg))$ and $Y, Z \in \gfg$. 
Now we have 
\begin{equation}
(Y^kc)(Z) = \sigma(Z)\sigma(Y)^{k-1}(c(Y))
\label{A4power}
\end{equation}
for any $k \geq 1$. If $k=1$, (\ref{A4power}) was already shown. 
Assume (\ref{A4power}) holds for $k \geq 1$. 
Then $(Y^{k+1}c)(Z) = \sigma(Z)\sigma(Y)^{k-1}((Yc)(Y)) =
\sigma(Z)\sigma(Y)^{k}(c(Y))$. This proceeds the induction.\par
Hence we have 
\begin{eqnarray*}
&&(e^{\ad X}\delta - \delta)(Z) =
\sum^\infty_{k=1}\frac1{k!}(X^k\delta)(Z)  =
\sum^\infty_{k=1}\frac1{k!}\sigma(Z)\sigma(X)^{k-1}(\delta X) \\
&=&
\sigma(Z)\left(\sum^\infty_{k=1}\frac1{k!}\sigma(X)^{k-1}\right)
(\delta X).
\end{eqnarray*}
This means $e^{\ad X}\delta - \delta$ is the $1$-coboundary induced 
by $\left(\sum^\infty_{k=1}\frac1{k!}\sigma(X)^{k-1}\right)
(\delta X)$. The lemma follows from Lemma \ref{A4coboundary}.
\end{proof}

\subsection{Deformation of a cobracket and a comodule structure map}

A similar results to Lemma \ref{A4coboundary} holds for 
a deformation of cobrackets and comodules. 

\begin{lem}\label{A8coboundary}
Let $\gfg$ be a Lie algebra, $M$ a $\gfg$-module, 
$\delta$ and $\delta' \in Z^1(\gfg;\mathcal{N}(\gfg))$ 
involutive Lie bialgebra structures on $\gfg$, and 
let $\mu$ and $\mu': M\to M\otimes\gfg$ make $M$ 
an involutive right $(\gfg, \delta)$-bimodule and 
an involutive right $(\gfg, \delta')$-bimodule,
respectively. Suppose there exist $A \in \mathcal{N}(\gfg)$ 
and $B \in \Lambda^2\gfg$ such that
\begin{enumerate}
\item[\rm (i)] $\forall X \in \gfg$, $(\delta' -\delta)(X) =
\sigma(X)(A)$, 
\item[\rm (ii)] $\forall m \in M$, $(\mu' -\mu)(m) = \pa(m\otimes B)$, and
\item[\rm (iii)] $\forall X \in \gfg$, $\sigma(X)(A)=\sigma(X)(B)$. 
\end{enumerate}
Then we have
$$
d(\delta, \mu) = d(\delta', \mu'): H_*(\gfg; M) \to H_{*+1}(\gfg; M).
$$
\end{lem}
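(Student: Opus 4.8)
The plan is to prove the lemma exactly as Lemma~\ref{A4coboundary} was proved: by producing an explicit chain homotopy on the complex $(M\otimes\Lambda^*\gfg,\pa^M)$ between the two cochain operators. Write $d^M$ and $d^{M}{}'$ for the operators on $M\otimes\Lambda^*\gfg$ attached to $(\delta,\mu)$ and $(\delta',\mu')$, so that $d(\delta,\mu)=H_*(d^M)$ and $d(\delta',\mu')=H_*(d^{M}{}')$, and $d$, $d'$ for the operators on $\Lambda^*\gfg$ of \S\ref{LCA}. I will construct a $\mathbb Q$-linear map $E\colon M\otimes\Lambda^p\gfg\to M\otimes\Lambda^{p+2}\gfg$ with
$$d^{M}{}'-d^M=\pa^M E-E\pa^M\qquad\text{on }M\otimes\Lambda^*\gfg.$$
Granting this, for any $\pa^M$-cycle $u$ one has $(d^{M}{}'-d^M)(u)=\pa^M(Eu)-E(\pa^Mu)=\pa^M(Eu)$, a $\pa^M$-boundary, so $d(\delta,\mu)[u]=d(\delta',\mu')[u]$ in $H_{*+1}(\gfg;M)$, which is the assertion.

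To identify $E$ and check the identity I would first compute $d^{M}{}'-d^M$ from the definition $d^M(m\otimes\xi)=\mu(m)\wedge\xi+(-1)^p m\otimes d\xi$: it splits into a ``comodule part'' $(\mu'-\mu)(m)\wedge\xi$ plus a ``coalgebra part'' $(-1)^p m\otimes(d'-d)\xi$. Condition~(ii) turns the first into $\pa^M(m\otimes B)\wedge\xi$; condition~(i) together with Lemma~\ref{A4coboundary} (i.e.\ formula~(\ref{A4homotopy}) with $\nabla A=0$, valid since $A\in\mathcal N(\gfg)$) turns the second into $(-1)^p m\otimes(\pa(A\wedge\xi)-A\wedge\pa\xi)$; and condition~(iii), which says that $A-B$ is $\gfg$-invariant, lets one transfer the coalgebra part to $B$, since $(\delta'-\delta)(X)=\sigma(X)(A)=\sigma(X)(B)$ and, by~(\ref{A4homotopy}) applied to $A-B$, the operators $\pa E_A-E_A\pa$ and $\pa E_B-E_B\pa$ on $\Lambda^*\gfg$ differ only by $E_{\nabla B}$. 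This rewrites $d^{M}{}'-d^M$ entirely in terms of $B$ and the data $(\mu,\delta)$. Modeled on the operator $E_A$ from the proof of Lemma~\ref{A4coboundary}, the natural candidate for the homotopy is then $E(m\otimes\xi)=m\otimes(B\wedge\xi)$ -- this is forced on $M=M\otimes\Lambda^0\gfg$ by~(ii) -- and one verifies $d^{M}{}'-d^M=\pa^M E-E\pa^M$ by expanding $\pa^M E-E\pa^M$ with the graded-Leibniz rules~(\ref{A5Gpartial}), (\ref{A5partial}), (\ref{A2wedge}), (\ref{A6wedge}), Lemma~\ref{A1wedge}, and~(\ref{A4homotopy}) applied to $B$ itself (the term $E_{\nabla B}$ it produces being exactly what absorbs the $(-1)^p(\nabla B)\wedge\xi$ contributed by the coalgebra part), then matching against the first computation. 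As in Lemma~\ref{A4coboundary}, the matching can be organized by induction on $p$, with the step $\xi\mapsto\xi\wedge Y$ handled by the Leibniz rules and the low-degree base cases reducing to~(i)--(iii) and the bimodule compatibility condition~(\ref{A6compatible}).

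The main obstacle is this verification: fixing the precise signs in $E$ and pushing the computation through. The operators $\Gamma$, $\pa^M$, $d^M$ all carry Koszul signs, so the bookkeeping is delicate; and, more substantively, one must check that conditions~(i), (ii), (iii) interlock \emph{exactly}. Condition~(iii) is the crucial one: the deformation of the cobracket and that of the comodule structure map do not fit together term by term on their own -- indeed $(\delta',\mu)$ need not be an involutive bimodule structure, so one cannot deform in two independent stages -- and it is the $\gfg$-invariance of $A-B$ that lets a single homotopy $E$ absorb both deformations at once. The remaining ingredients, the Leibniz formulas and~(\ref{A4homotopy}), are already in place in \S1--\S3, so once $E$ is correctly pinned down the verification is mechanical.
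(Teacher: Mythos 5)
Your proposal is correct and follows essentially the same route as the paper: the paper's proof defines exactly your candidate homotopy, $E_B(m\otimes\xi):=m\otimes\xi\wedge B$ (which equals $m\otimes B\wedge\xi$ since $B$ has even degree), and verifies $\pa^M E_B-E_B\pa^M=d(\delta',\mu')-d(\delta,\mu)$ on the chain level using (\ref{A4homotopy}), (\ref{A5Gpartial}) and conditions (ii)--(iii), with $A\in\mathcal{N}(\gfg)$ killing the $E_{\nabla A}$ term. The only difference is organizational -- the paper absorbs the $\pa B$ contribution into $\pa(m\otimes B)=(\mu'-\mu)(m)$ via (ii) rather than cancelling $E_{\nabla B}$ against the coalgebra part, and it does the computation directly instead of by induction on $p$ -- so your plan, once the mechanical verification is written out, reproduces the paper's argument.
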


\begin{proof}
We define $E_B: M\otimes\Lambda^p\gfg \to M\otimes\Lambda^{p+2}\gfg$
by $E_B(m\otimes\xi) := m\otimes\xi\wedge B$ for $m \in M$ and $\xi \in
\Lambda^p\gfg$. By (\ref{A4homotopy}) and (\ref{A5Gpartial}), 
we have 
$$
(\pa E_B - E_B\pa)(m\otimes\xi)
= \pa(m\otimes B)\wedge\xi + m\otimes
\sum^p_{i=1}(-1)^i\sigma(X_i)(B)\wedge X_1\wedge\overset{\hat
i}\cdots\wedge X_p.
$$
Using the conditions (ii) (iii) and (\ref{A4homotopy}), we compute
$
(\pa E_B - E_B\pa)(m\otimes\xi) = (\mu'-\mu)(m)\wedge\xi
+ m\otimes(\pa E_A - E_A\pa)\xi = (d'-d)(m\otimes\xi).
$
Here we write simply $d = d(\delta, \mu)$ and $d' = d(\delta', \mu')$.
This proves the lemma.
\end{proof}

Let $(\gfg, \delta)$ be a topological involutive Lie bialgebra, 
$(M, \mu)$ a topological involutive right $\gfg$-bimodule, 
$U$ an automorphism of the topological Lie algebra $\gfg$, 
and $U^M$ an automorphism of the topological vector space $M$ 
compatible with $U$. We define $U\mu := (U^M\otimes U)\delta
(U^M)^{-1}$. Then $(M, U\mu)$ is an involutive right 
$(\gfg, U\delta)$-bimodule. 
\begin{lem}\label{A8Aexp}
Let $X \in \gfg$ and suppose 
$e^{\ad X} = \sum^\infty_{k=0}\frac{1}{k!}(\ad(X))^k$ and 
$e^{\sigma(X)} = \sum^\infty_{k=0}\frac{1}{k!}(\sigma(X))^k$
converge
as automorphisms of the topological Lie algebra $\gfg$ and 
the topological vector space $M$, 
respectively. Then we have $d(\delta, \mu) = d(e^{\ad X}\delta, 
e^{\sigma(X)}\mu)$ on $H_*(\gfg; M)$.
\end{lem}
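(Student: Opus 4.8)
The plan is to reduce Lemma \ref{A8Aexp} to Lemma \ref{A8coboundary} by producing explicit elements $A \in \mathcal{N}(\gfg)$ and $B \in \Lambda^2\gfg$ satisfying the three conditions (i), (ii), (iii) there, for the deformation $(\delta,\mu) \rightsquigarrow (e^{\ad X}\delta, e^{\sigma(X)}\mu)$. For $A$, Lemma \ref{A4Aexp} already does the work: we take $A = \left(\sum^\infty_{k=1}\frac1{k!}\sigma(X)^{k-1}\right)(\delta X)$, which lies in $\mathcal{N}(\gfg)$ because $\mathcal{N}(\gfg)$ is a $\gfg$-submodule containing $\delta X$, and which by the computation in the proof of Lemma \ref{A4Aexp} satisfies $(e^{\ad X}\delta - \delta)(Z) = \sigma(Z)(A)$ for all $Z$, i.e. condition (i). So the real content is to find $B$ with $(e^{\sigma(X)}\mu - \mu)(m) = \pa^M(m\otimes B)$ for all $m$, and then to check the compatibility (iii) between $A$ and $B$.

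For $B$, I expect the natural candidate to be $B = \left(\sum^\infty_{k=1}\frac1{k!}\sigma(X)^{k-1}\right)(dX) = -\left(\sum^\infty_{k=1}\frac1{k!}\sigma(X)^{k-1}\right)(\delta X)$, i.e. essentially $-A$ but viewed in $\Lambda^2\gfg$ without the kernel constraint; note $dX = -\delta X$, so in fact $B = -A$ as elements of $\Lambda^2\gfg$. Then condition (iii), $\sigma(X)(A) = \sigma(X)(B)$, would require $\sigma(X)(A) = -\sigma(X)(A)$, which is false in general — so $B = -A$ is the wrong guess and I must think again. The correct route is to mimic the structure of Lemma \ref{A4Aexp} on the bimodule side: introduce the action of $\gfg$ on the space of $1$-cochains/cocycles governing the comodule deformation, show that $e^{\sigma(X)}\mu - \mu$ is a coboundary, and read off $B$ from the formula $\pa^M(m\otimes B) = \Gamma_\sigma(m\otimes B) + m\otimes \pa B$; since we want this to equal $(\mu'-\mu)(m) \in M\otimes\gfg$, the relevant part is $\Gamma_\sigma(m\otimes B) = (\overline\sigma\otimes 1_\gfg)(m\otimes B)$ by \eqref{A5gamma}, together with $m\otimes\pa B = 0$, forcing $B \in \mathcal{N}(\gfg) = \Ker\nabla$ as well (so $\pa B = \nabla B = 0$ on $\Lambda^2$, wait — $\pa$ on $\Lambda^2$ is exactly $\nabla$ up to sign). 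So in fact $B$ must lie in $\mathcal{N}(\gfg)$, and then (iii) becomes $\sigma(X)(A) = \sigma(X)(B)$, an identity to be verified between the two explicit sums.

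Thus the key steps are: (1) recall/transcribe from Lemma \ref{A4Aexp} the cocycle-action formula $(Y^k c)(Z) = \sigma(Z)\sigma(Y)^{k-1}(c(Y))$ and its bimodule analogue for the comodule map $\mu$, establishing $(e^{\sigma(X)}\mu - \mu)(m) = \overline\sigma\big(\text{something}\big)$-type formula exhibiting it as a coboundary $\pa^M(m\otimes B)$; (2) identify $B$ explicitly, check $B \in \mathcal{N}(\gfg)$ so that $\pa^M(m\otimes B) = \Gamma_\sigma(m\otimes B) = (\overline\sigma\otimes 1_\gfg)(m\otimes B)$ has no $\pa$-of-$\gfg$ correction; (3) verify that this $B$ coincides with $A$ (up to the sign making (iii) true), i.e. that the comodule deformation $e^{\sigma(X)}\mu$ and the cobracket deformation $e^{\ad X}\delta$ are driven by the same element — this should follow from the definition $U\mu := (U^M\otimes U)\delta(U^M)^{-1}$ specialized to $U = e^{\ad X}$, $U^M = e^{\sigma(X)}$, unwound via $\sigma(X)(\overline\sigma\otimes 1) - (\overline\sigma\otimes 1)(\sigma(X)\otimes 1 + 1\otimes\ad X) = 0$, which is precisely the module axiom \eqref{A5action} packaged appropriately; (4) invoke Lemma \ref{A8coboundary} with this $A = B$ to conclude $d(\delta,\mu) = d(e^{\ad X}\delta, e^{\sigma(X)}\mu)$ on $H_*(\gfg;M)$.

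The main obstacle I anticipate is step (3): correctly matching the two deformation parameters and getting the signs in conditions (i)–(iii) of Lemma \ref{A8coboundary} consistent — in particular reconciling the $d = -\delta$ sign convention on $\gfg$ with the $\mu$ convention on $M$, and confirming that the single element $A \in \mathcal{N}(\gfg)$ simultaneously serves as the ``$A$'' and the ``$B$'' of that lemma. Once the bookkeeping matches, everything reduces to the already-proven Lemma \ref{A8coboundary}, exactly as Lemma \ref{A4Aexp} reduced to Lemma \ref{A4coboundary}.
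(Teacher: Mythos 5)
Your plan coincides with the paper's proof: both reduce to Lemma \ref{A8coboundary} by taking $B=A=\left(\sum^\infty_{k=1}\frac1{k!}\sigma(X)^{k-1}\right)(\delta X)\in\mathcal{N}(\gfg)$, so that condition (iii) is automatic, condition (i) is Lemma \ref{A4Aexp}, and condition (ii) becomes $(e^{\sigma(X)}\mu-\mu)(m)=\Gamma(m\otimes A)=\pa(m\otimes A)$. The one detail you leave implicit is the base case of the inductive computation, namely that $(X\mu)(m)=\Gamma(m\otimes\delta X)$, which is precisely the compatibility condition (\ref{A6compatible}); the inductive step $(X\varphi)(m)=\Gamma(m\otimes\sigma(X)\Phi)$ for $\varphi(m)=\Gamma(m\otimes\Phi)$ is the intertwining identity you describe in step (3).
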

\begin{proof}
We write $A = \left(\sum^\infty_{k=1}\frac1{k!}\sigma(X)^{k-1}\right)
(\delta X)$. As was shown in Lemma \ref{A4Aexp}, 
$(e^{\ad X}\delta -\delta)(Z) = \sigma(Z)(A)$ for any $Z \in \gfg$. 
From \ref{A6compatible} follows $(X\mu)(m) = \Gamma(m\otimes\delta X)$. 
Let $\Phi \in \Lambda^2\gfg$. If we define $\varphi: M \to M\otimes\gfg$ 
by $\varphi(m) := \Gamma(m\otimes\Phi)$, then we have $(X\varphi)(m) 
= \sigma(X)\varphi(m) - \varphi(Xm) 
= \sigma(X)\Gamma(m\otimes\Phi) - \Gamma(Xm\otimes\Phi)
= \Gamma(m\otimes\sigma(X)\Phi)$. Hence, by $A \in \mathcal{N}(\gfg)$,  
\begin{eqnarray*}
&& (e^{\sigma(X)}\mu - \mu)(m) 
= \sum^\infty_{k=1}\frac1{k!}(X^k\mu)(m)
= \sum^\infty_{k=1}\frac1{k!}\Gamma(m\otimes\sigma(X)^{k-1}\delta X) \\
&=& \Gamma(m\otimes A) = \pa(m\otimes A).
\end{eqnarray*}
Consequently the lemma follows from Lemma \ref{A8coboundary}.
\end{proof}

\section{Surface Topology}

We discuss some relations among these homological facts and surface topology, 
in particular, a tensorial description of the Turaev cobracket and Kontsevich's 
non-commutative symplectic geometry. 

\subsection{Symplectic derivations}

It is the Lie algebra of symplectic derivations of the completed 
tensor algebra of a symplectic vector space that plays a central role
throughout this section. 
Let $H$ be a symplectic $\mathbb{Q}$-vector space of dimension $2g$, 
$g \geq 1$, and $\widehat{T} = \widehat{T}(H) := \prod^\infty_{m=0}
H^{\otimes m}$ the completed tensor algebra over $H$. 
$\widehat{T}$ is filtered by the two-sided ideals
$\widehat{T}_p := \prod^\infty_{m=p}H^{\otimes m}$, $p \geq 1$, 
and constitutes a complete Hopf algebra 
whose coproduct $\Delta: \widehat{T} \to \widehat{T}
\widehat{\otimes}\widehat{T}$ is given by $\Delta(X) = X\widehat{\otimes}1
+ 1\widehat{\otimes}X$ for any $X \in H$. 
The symplectic form $\omega \in H^{\otimes 2}$ is given by 
$\omega = \sum^g_{i=1}A_iB_i - B_iA_i \in H^{\otimes 2}$ for any 
symplectic basis $\{A_i, B_i\}^g_{i=1}$ of $H$. 
We study the Lie algebra of continuous derivations on $\widehat{T}$ 
annihilating the form $\omega$, which we denote by ${\rm Der}_\omega
(\widehat{T}) = \mathfrak{a}^-_g$. 
We regard ${\rm Der}_\omega(\widehat{T})$ as a subspace of $H^*\otimes 
\widehat{T}$ by the restriction map to $H$. The symplectic vector space $H$ 
is naturally isomorphic to its dual $H^*$ 
by the map $X \in H \mapsto (Y \mapsto X\cdot Y) \in H^*$, 
so that we identify $H^*\otimes \widehat{T} = H\otimes \widehat{T} 
= \widehat{T}_1$. Then the image of ${\rm Der}_\omega(\widehat{T})$
in $\widehat{T}_1$ coincides with the cyclic invariants in $\widehat{T}_1
= \prod^\infty_{m=1}H^{\otimes m}$. In other words, 
we identify ${\rm Der}_\omega(\widehat{T})$ with $N(\widehat{T}_1)
\subset \widehat{T}_1$, where $N: \widehat{T} \to \widehat{T}$ is 
the {\it cyclic symmetrizer} or the {\it cyclicizer} defined by $N\vert_{H^{\otimes 0}} := 0$ 
and $N(X_1\cdots X_m) := \sum^m_{i=1}X_i\cdots X_mX_1\cdots X_{i-1}$ 
for $X_i \in H$. See \cite{KK1} for details. The subspace $N(H^{\otimes 2})$ 
is a Lie subalgebra naturally isomorphic to $\mathfrak{sp}_{2g}(\mathbb{Q})$. 
\par
Schedler \cite{Sch} constructed a cobracket on the necklace Lie algebra 
associated to a quiver. The Lie algebra $\mathfrak{a}^-_g$ can be regarded 
as such a Lie algebra. Schedler's cobracket for $\mathfrak{a}^-_g$, 
which we denote by $\delta^{\rm alg}: \mathfrak{a}^-_g \to \mathfrak{a}^-_g\widehat{\otimes}\mathfrak{a}^-_g$, 
is given by 
\begin{eqnarray*}
\delta^{\rm alg}(N(X_1X_2\cdots X_m)) 
&=& \sum_{i<j}(X_i\cdot X_j)
\{N(X_{i+1}\cdots X_{j-1})\widehat{\otimes}
N(X_{j+1}\cdots X_mX_1\cdots X_{i-1})\\
&& \quad\quad - 
N(X_{j+1}\cdots X_mX_1\cdots X_{i-1})
\widehat{\otimes}
N(X_{i+1}\cdots X_{j-1})\}
\end{eqnarray*}
for any $X_i \in H$ and $m \geq 1$. \par
The cyclic symmetry suggests us a close relation between symplectic 
derivations and fatgraphs, which was exhausted in Kontsevich's 
formal symplectic geometry \cite{Kon}. 
He studied a Lie subalgebra $a_g := \bigoplus^\infty_{m=2}N(H^{\otimes m})$
of $\mathfrak{a}^-_g$, which he called ``associative", and proved that 
the primitive part of the limit of the relative homology $\lim_{g\to\infty} H_k(a_g, 
\mathfrak{sp}_{2g}(\mathbb{Q}))$ is isomorphic to $\bigoplus_{s>0, 2-2g-s<0}
H^{4g-4+2s-k}(\mathbb{M}^s_g/\mathfrak{S}_s; \mathbb{Q})$. 
Here $\mathbb{M}^s_g$ is the moduli space of Riemann surfaces 
of genus $g$ with $s$ punctures, and the $s$-th symmetric group 
$\mathfrak{S}_s$ acts on it by permutation of punctures. \par
Schedler's cobracket $\delta^{\rm alg}$ does not preserve the subalgebra 
$a_g$, so that $d(\delta^{\rm alg})$ does not act on the homology group 
$H_k(a_g)$. On the other hand, Schedler's cobracket $\delta^{\rm alg}$ 
preserves the subalgebra $a^-_g := \bigoplus^\infty_{m=1}N(H^{\otimes m})$, 
whose degree completion is just the Lie algebra $\mathfrak{a}^-_g$. 
\begin{problem}\label{agminus}
Find a fatgraph interpretation of the primitive part of the limit of the relative homology $\lim_{g\to\infty} H_k(a^-_g, 
\mathfrak{sp}_{2g}(\mathbb{Q}))$. 
\end{problem} 
The difference between $a_g$ and $a^-_g$ is just $H$, 
the derivations of degree $-1$, which seem to correspond to tails in fatgraphs. 
The homology group $H_*({a}_g^-, 
\mathfrak{sp}_{2g}(\mathbb{Q}))$ seems to be
related to the moduli space of Riemann surfaces with boundary and marked
points studied in \cite{Co}. 
See \cite{Pen} for details on fatgraphs. 
The coboundary operator 
$d(\delta^{\rm alg})$ is defined on $H_*(a^-_g, 
\mathfrak{sp}_{2g}(\mathbb{Q}))$, since $\delta^{\rm alg}$ is 
$\mathfrak{sp}_{2g}(\mathbb{Q})$-invariant, and vanishes on $N(H^{\otimes 2}) 
= \mathfrak{sp}_{2g}(\mathbb{Q})$. 
\begin{problem} If Problem \ref{agminus} is solved in an affirmative way, 
then find a fatgraph interpretation of the coboundary operator 
$d(\delta^{\rm alg})$. 
\end{problem}
As will be explained in the next subsection, Schedler's cobracket is closely 
related to the Turaev cobracket. So the operator 
$d(\delta^{\rm alg})$ seem to be related to degeneration of Riemann surfaces. 

\subsection{Turaev cobracket}

In this section, for simplicity, 
we confine ourselves to a compact connected oriented 
surface with connected boundary. 
See Appendix for the definitions of the Goldman bracket, the Turaev cobracket 
and the operations $\sigma$ and $\mu$ stated below. 
We begin by recalling some results of Kuno and the author 
on a completion of the Goldman Lie algebra \cite{KK1} \cite{KK3}.
Let $g \geq 1$ be a positive integer. 
We denote by $\Sigma = \Sigma_{g,1}$ a compact connected oriented 
surface of genus $g$ with $1$ boundary component, 
and by $\hat\pi = \hat\pi(\Sigma) = [S^1, \Sigma]$ the homotopy set of 
free loops on the surface $\Sigma$. 
Goldman \cite{Go} defines a natural Lie algebra structure
on the $\mathbb{Q}$-free vector space $\mathbb{Q}\hat\pi$, 
which we call the Goldman Lie algebra. 
Choose a basepoint $*$ on the boundary $\pa\Sigma$, and 
consider the fundamental group $\pi:= \pi_1(\Sigma, *)$.
The group ring $\mathbb{Q}\pi$ admits a decreasing filtration 
given by the power of the augmentation ideal $I\pi$. 
Since $\pi$ is a free group of rank $2g$, the completion map 
$\mathbb{Q}\pi \to \widehat{\mathbb{Q}\pi} :=\varprojlim_{n\to\infty}
\mathbb{Q}\pi/(I\pi)^n$ is injective. We can consider a similar 
completion of the Goldman Lie algebra $\mathbb{Q}\hat\pi$ 
as follows. 
The forgetful map of basepoints 
$\vert\,\, \vert: \mathbb{Q}\pi \to \mathbb{Q}\hat\pi$ is surjective, 
since $\Sigma$ is connected. We define a filtration 
$\{\mathbb{Q}\hat\pi(n)\}_{n\geq 1}$ of $\mathbb{Q}\hat\pi$ 
by $\mathbb{Q}\hat\pi(n) := \vert \mathbb{Q}1 + (I\pi)^n\vert$, 
where $1 \in \pi$ is the constant loop. In \cite{KK3} it is proved 
that $[\mathbb{Q}\hat\pi(n), \mathbb{Q}\hat\pi(n')] \subset 
\mathbb{Q}\hat\pi(n+n'-2)$. Hence we can consider the completed 
Goldman Lie algebra $\widehat{\mathbb{Q}\hat\pi} = \widehat{\mathbb{Q}\hat\pi}(\Sigma)$ defined by $\widehat{\mathbb{Q}\hat\pi} :=  \varprojlim_{n\to\infty}
\mathbb{Q}\hat\pi/\mathbb{Q}\hat\pi(n)$. 
In \cite{KK1} Kuno and the author 
defined a natural operation $\sigma: \mathbb{Q}\hat\pi\otimes \mathbb{Q}\pi 
\to \mathbb{Q}\pi$ to introduce a natural nontrivial 
$\mathbb{Q}\hat\pi$-module structure on the group ring $\mathbb{Q}\pi$,
which the completed group ring $\widehat{\mathbb{Q}\pi}$ inherits 
as a nontrivial $\widehat{\mathbb{Q}\hat\pi}$-module structure \cite{KK3}. 
These Lie algebras act on the algebras by (continuous) derivations, 
respectively. 
\par
As is classically known, the group ring ${\mathbb{Q}\pi}$ 
is embedded into the completed tensor algebra $\widehat{T}$ 
over the first rational homology group 
$H := H_1(\Sigma; \mathbb{Q})$ of the surface $\Sigma$ as (complete)
Hopf algebras. Here we consider $H$ a symplectic $\mathbb{Q}$-vector 
space by the intersection number on the surface $\Sigma$. 
To study the embedding in detail, 
Massuyeau \cite{Mas} introduced 
the notion of a {symplectic expansion} of the fundamental group $\pi$. 
A map $\theta: \pi \to \widehat{T}$ is a {\bf symplectic expansion} if 
it satisfies the following four conditions.
\begin{enumerate}
\item We have $\theta(xy) = \theta(x)\theta(y)$ for any $x$ and $y \in \pi$ . 
\item For any $x \in \pi$ we have $\theta(x) \equiv 1 + [x] \pmod{\widehat{T}_2}$, 
where $[x] \in H \subset \widehat{T}$ is the homology class of $x$.
\item For any $x \in \pi$, $\theta(x)$ is group-like, namely, $\Delta\theta(x) 
= \theta(x)\widehat{\otimes}\theta(x)$. 
\item Let $\zeta \in \pi$ be the boundary loop in the negative direction, 
and $\omega \in H^{\otimes 2} \subset \widehat{T}$ the symplectic form. 
Then we have $\theta(\zeta) = e^\omega \in \widehat{T}$.
\end{enumerate}
Symplectic expansions do exist \cite{Ka2} \cite{Mas} \cite{Ku1}. 
A symplectic expansion $\theta$ induces an isomorphism $\theta:
\widehat{\mathbb{Q}\pi} \overset\cong\to \widehat{T}$ of complete 
Hopf algebras. For any two symplectic expansions $\theta$ and $\theta'$, 
there exists an element of $u \in {\rm Der}_\omega(\widehat{T}) 
= \mathfrak{a}_g^-$ such that $(u\widehat{\otimes}u)\Delta = \Delta u$, 
$u(H) \subset \widehat{T}_2$ and $\theta' = e^u\circ\theta: 
\pi \to \widehat{T}$. See \cite{KK1} for details.
\par
In \cite{KK1} and \cite{KK3}, Kuno and the author proved 
\begin{thm}\label{isom}
Any symplectic expansion $\theta: \pi \to \widehat{T}$ induces 
\begin{enumerate}
\item an isomorphism of Lie algebras
$$
-N\theta: \widehat{\mathbb{Q}\hat\pi} 
\overset\cong\longrightarrow N(\widehat{T}_1) = 
{\rm Der}_\omega(\widehat{T}) = \mathfrak{a}^-_g
$$
given by $-(N\theta)(\vert x\vert) := -N(\theta(x))$ for any $x \in \pi$, and 
\item a commutative diagram
$$
\begin{CD}
\widehat{\mathbb{Q}\hat\pi}\otimes \widehat{\mathbb{Q}\pi} 
@>>> \widehat{\mathbb{Q}\pi} \\
@V{-N\theta\otimes\theta}VV @V{\theta}VV\\
{\rm Der}_\omega(\widehat{T})\otimes\widehat{T} 
@>>> \widehat{T},
\end{CD}
$$
where the horizontal arrows mean the actions as derivations. 
\end{enumerate}
\end{thm}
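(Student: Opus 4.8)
The plan is to reduce both assertions to the already-established algebraic dictionary between the completed group ring and the completed tensor algebra, and to track the Goldman bracket and the $\sigma$-action through this dictionary. First I would recall that a symplectic expansion $\theta$ induces an isomorphism of complete Hopf algebras $\theta: \widehat{\mathbb{Q}\pi} \overset{\cong}{\to} \widehat{T}$ (condition (1) gives multiplicativity, condition (2) makes it filtration-preserving hence an isomorphism after completion, condition (3) makes it a Hopf isomorphism). The point is then to give an \emph{intrinsic}, loop-theoretic formula for the Goldman bracket and the action $\sigma$ that manifestly matches, after applying $\theta$, the bracket of $\mathrm{Der}_\omega(\widehat{T}) = N(\widehat{T}_1)$ and the tautological action of derivations on $\widehat{T}$. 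The key input (see Appendix and \cite{KK1}) is the formula expressing both operations in terms of counting intersection points of loops on $\Sigma$, weighted by signs, which after expanding each loop via $\theta$ becomes exactly a pairing against the symplectic form $\omega$.

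For part (1), I would proceed in the following steps. Step one: show that the map $\vert x\vert \mapsto N(\theta(x))$ is well defined, i.e.\ that $N\theta$ kills the constant loop and depends only on the conjugacy class of $x$; this is immediate from $N\vert_{H^{\otimes 0}} = 0$ and the cyclic symmetry of the cyclicizer $N$ together with multiplicativity of $\theta$. Step two: verify that the image is exactly the cyclic invariants $N(\widehat{T}_1)$, which is identified with $\mathrm{Der}_\omega(\widehat{T})$ as recalled in \S\ref{LBA}'s sequel; surjectivity uses that $\theta: \widehat{\mathbb{Q}\pi}\to\widehat{T}$ is onto and that $N(\widehat{T}_1)$ is spanned by cyclic words $N(Y_1\cdots Y_m)$ which are hit by products of homology classes. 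Step three: show $-N\theta$ is a Lie algebra homomorphism. Here I would use the explicit intersection-number formula for the Goldman bracket of two free loops, push both loops forward along $\theta$, and check that the resulting expression equals the bracket $[\,\cdot\,,\cdot\,]$ on $\mathrm{Der}_\omega(\widehat T)$; the sign $-$ is needed precisely to reconcile the orientation convention in the Goldman bracket with the commutator of derivations. Injectivity then follows since $\theta$ is injective on $\widehat{\mathbb{Q}\pi}$ and $N$ restricted to the relevant graded pieces is injective modulo the constants already quotiented out in $\widehat{\mathbb{Q}\hat\pi}$.

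For part (2), the commutativity of the square reduces, by continuity and $\mathbb{Q}$-bilinearity, to checking it on a free loop $\vert y\vert$ acting on a based loop $z\in\pi$: one must show $\theta\bigl(\sigma(\vert y\vert)(z)\bigr) = \bigl(-N\theta(\vert y\vert)\bigr)\cdot\theta(z)$, where the right-hand dot is the action of the derivation $-N(\theta(y))$ on $\widehat{T}$. Again this is the intersection-number formula for $\sigma$ from \cite{KK1}, transported through $\theta$, where each intersection point contributes a term obtained by ``cutting and regluing'' the based loop, which corresponds on the tensor side exactly to applying the derivation determined by inserting $\omega$ at the relevant slot. I expect the main obstacle to be bookkeeping: getting every orientation sign, every cyclic rotation, and every basepoint-splitting to line up, and confirming that the derivation attached to $N(\theta(y))$ acts on $\theta(z)$ in the way dictated by the geometric $\sigma$. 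Since the underlying identities are exactly those proved in \cite{KK1} and \cite{KK3}, the argument here is a matter of assembling those results and verifying the compatibility of conventions rather than proving anything genuinely new; I would therefore present the proof as a careful translation, citing \cite{KK1} \cite{KK3} for the geometric formulas and checking the normalizations explicitly.
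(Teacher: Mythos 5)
This theorem is not proved in the present paper at all: it is imported verbatim from \cite{KK1} and \cite{KK3} (``In \cite{KK1} and \cite{KK3}, Kuno and the author proved\dots''), so there is no in-paper argument to compare against. Your closing stance --- that the proof is an assembly of the geometric formulas of those references plus a check of conventions --- is therefore consistent with how the paper itself treats the statement. That said, as a reconstruction your outline has concrete gaps that would surface if you tried to write it out.

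First, you never invoke condition (4) of a symplectic expansion, $\theta(\zeta)=e^\omega$. This is not a normalization detail: for a merely group-like expansion the transported operations do \emph{not} land in ${\rm Der}_\omega(\widehat{T})$ and are not given by contraction against the symplectic form. The mechanism in \cite{KK1} is a tensorial description of the homotopy intersection form (cf.\ \cite{MT1}), and it is exactly the identity $\theta(\zeta)=e^\omega$ that forces that description into the standard form from which both the bracket formula and the $\sigma$-formula follow. A proof sketch that does not say where this condition enters is missing the central input. Second, your bijectivity argument for part (1) is too quick: that $-N\theta$ maps $\widehat{\mathbb{Q}\hat\pi}$ \emph{onto} the cyclic invariants $N(\widehat{T}_1)$, and injectively, requires identifying the filtration $\mathbb{Q}\hat\pi(n)=\vert\mathbb{Q}1+(I\pi)^n\vert$ with the preimage of the degree filtration on $N(\widehat{T}_1)$; this comparison of filtrations is a genuine theorem of \cite{KK3} (it is what makes $[\mathbb{Q}\hat\pi(n),\mathbb{Q}\hat\pi(n')]\subset\mathbb{Q}\hat\pi(n+n'-2)$ and the completion well behaved), not a formality about $N$ being ``injective on the relevant graded pieces.'' Third, your order of proof is harder than necessary: establishing the Lie-homomorphism property of part (1) by directly expanding the Goldman bracket of two free loops is the most computation-heavy route. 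The cleaner path is to prove part (2) first --- the intertwining of $\sigma$ with the derivation action --- and then deduce part (1) from the fact that the action of $\widehat{\mathbb{Q}\hat\pi}$ on $\widehat{\mathbb{Q}\pi}$ is faithful, so that $-N\theta([u,v])$ and $[-N\theta(u),-N\theta(v)]$ agree because they act identically on $\widehat{T}$. If you restructure around (2) $\Rightarrow$ (1) and make explicit where $\theta(\zeta)=e^\omega$ and the filtration comparison are used, the sketch becomes an honest reduction to the cited results.
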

\par
Let $\mathbb{Q}\hat\pi' = \mathbb{Q}\hat\pi'(\Sigma)$ be the quotient 
of $\mathbb{Q}\hat\pi$ by the linear span of the constant loop $1 \in \hat\pi$. 
Since $1$ is in the center of $\mathbb{Q}\hat\pi$, it has a natural Lie algebra 
structure. In \cite{T2} Turaev introduced a cobracket $\delta$ on the Lie algebra 
$\mathbb{Q}\hat\pi'$ and proved that the pair $(\mathbb{Q}\hat\pi', \delta)$ 
is a Lie bialgebra. Later Chas \cite{Chas} proved that it is involutive. 
Kuno and the author \cite{KK4} proved the completed Goldman Lie 
algebra $\widehat{\mathbb{Q}\hat\pi}$ inherits the Turaev cobracket, 
so we call it the {completed Goldman-Turaev Lie bialgebra}.  
Inspired by Turaev's $\mu$ in \cite{T1}, 
they \cite{KK4} introduced a natural nontrivial 
comodule structure map $\mu: \mathbb{Q}\pi \to \mathbb{Q}\pi\otimes 
\mathbb{Q}\hat\pi'$, and proved that $(\mathbb{Q}\pi, \mu)$ is an 
involutive $\mathbb{Q}\hat\pi'$-bimodule. 
The comodule structure map $\mu$ defines a complete involutive $\widehat{
\mathbb{Q}\hat\pi}$-bimodule structure on the completed group ring 
$\widehat{\mathbb{Q}\pi}$ \cite{KK4}. \par
Let $\theta: \pi \to \widehat{T}$ be a symplectic expansion. 
Then the Turaev cobracket $\delta$ and 
the isomorphisms in Theorem \ref{isom} defines a cobracket 
$\delta^\theta := ((-N\theta)\widehat{\otimes}(-N\theta))\circ\delta\circ(-N\theta): 
\mathfrak{a}^-_g \to \mathfrak{a}^-_g\widehat{\otimes}\mathfrak{a}^-_g$. 
Similarly the comodule structure map $\mu^\theta: 
(\theta\widehat{\otimes}(-N\theta))\circ\mu\circ\theta: \widehat{T} \to 
\widehat{T}\widehat{\otimes}\mathfrak{a}^-_g$ can be defined so that 
$(\widehat{T}, \mu^\theta)$ is an involutive $\mathfrak{a}^-_g$-bialgebra. 
\par
The grading on $\mathfrak{a}_g^-$ defines the Laurent expansion of the cobracket $\delta^\theta$
\begin{eqnarray*}
&&\delta^\theta(N(X_1X_2\cdots X_m)) = \sum^\infty_{p= -\infty}\delta^\theta_{(p)}
(N(X_1X_2\cdots X_m)), \\
&&\quad \delta^\theta_{(p)}
(N(X_1X_2\cdots X_m)) \in (\mathfrak{a}_g^-\widehat{\otimes}\mathfrak{a}_g^-)_{(m+p)} := \bigoplus_{k+l = m+p} N(H^{\otimes k}) \otimes N(H^{\otimes l})
\end{eqnarray*}
for $X_i \in H$. 
Massuyeau and Turaev \cite{MT2} and Kuno and the author \cite{KK4} 
independently proved 
\begin{thm}\label{laurent}
 For any symplectic expansion $\theta$ we have 
\begin{enumerate}
\item $\delta^\theta_{(p)} = 0$ for $p = 0, -1$, and $p \leq -3$.
\item $\delta^\theta_{(-2)}$ is the same as Schedler's cobracket \cite{Sch}, i.e., 
 $\delta^\theta_{(-2)} = \delta^{\rm alg}$. 
\end{enumerate}
\end{thm}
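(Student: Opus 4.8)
The plan is to reduce Theorem~\ref{laurent} to a single congruence in the length grading and then to establish it by transporting Turaev's geometric description of $\delta$ through the isomorphism of Theorem~\ref{isom}. First note that Schedler's cobracket is homogeneous: in $\delta^{\rm alg}(N(X_1\cdots X_m))$ the two tensor factors have lengths $j-i-1$ and $m-j+i-1$, summing to $m-2$, so $\delta^{\rm alg}$ sends length $m$ to total length $m-2$. Hence the theorem is equivalent to the assertion that, for all $m\ge1$ and all $X_i\in H$,
\begin{equation*}
\delta^\theta(N(X_1\cdots X_m))\;\equiv\;\delta^{\rm alg}(N(X_1\cdots X_m))\pmod{\textstyle\prod_{a+b\ge m+1}N(H^{\otimes a})\widehat{\otimes} N(H^{\otimes b})}.
\end{equation*}
Indeed, comparing length-homogeneous pieces: total length $<m-2$ gives $\delta^\theta_{(p)}=0$ for $p\le-3$; total length $m-2$ gives $\delta^\theta_{(-2)}=\delta^{\rm alg}$; total lengths $m-1$ and $m$ give $\delta^\theta_{(-1)}=\delta^\theta_{(0)}=0$, since on the right only length $m-2$ occurs while the error sits in lengths $\ge m+1$. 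The components $\delta^\theta_{(p)}$ with $p\ge1$ stay unconstrained, as they should.

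To prove the congruence I would compute $\delta^\theta$ geometrically. Fix a symplectic basis; by $\mathbb{Q}$-bilinearity and continuity it suffices to treat $N(X_1\cdots X_m)$ with the $X_i$ basis vectors. Let $\alpha\in\widehat{\mathbb{Q}\hat\pi}$ be the element with $-N\theta(\alpha)=N(X_1\cdots X_m)$; since the isomorphism of Theorem~\ref{isom} respects filtrations, $\alpha\in\widehat{\mathbb{Q}\hat\pi}(m)$, and modulo sufficiently high filtration $\alpha$ is realized by a $\mathbb{Q}$-combination of classes $|x|$ of based loops $x$ on $\Sigma$ put in general position. By Turaev's self-intersection formula (see Appendix), $\delta(|x|)=\sum_p\varepsilon_p\bigl(|x'_p|\otimes|x''_p|-|x''_p|\otimes|x'_p|\bigr)$, the sum over the transverse double points $p$ of $x$, at which $x$ splits into a based loop $x'_p$ and the ``loop part'' $x''_p\in\hat\pi$. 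Pushing each summand through $\theta$ and $N$ turns the contribution of $p$ into $N\theta(x'_p)\widehat{\otimes} N\theta(x''_p)-N\theta(x''_p)\widehat{\otimes} N\theta(x'_p)$, with $\theta(x'_p)$ a product of two group-like factors and $\theta(x''_p)$ a single one.

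Now use that the resolution of a transverse double point on $\Sigma$ is governed by the homological intersection form $H\otimes H\to\mathbb{Q}$, a pairing of degree $2$ with nothing of lower degree. This forces every summand to have output of total length $\ge m-2$, i.e.\ $\delta^\theta_{(p)}=0$ for $p\le-3$; and matching the lowest-degree output of each $p$ with the operation ``cut the cyclic word $X_1\cdots X_m$ at the two passages through $p$, weight by the pairing of the two consumed letters, and leave the two complementary cyclic words'', then antisymmetrising and summing, reproduces Schedler's formula, so $\delta^\theta_{(-2)}=\delta^{\rm alg}$. (Once $\delta^\theta_{(p)}=0$ for $p\le-3$ is known, the lowest surviving component is automatically independent of $\theta$, by the conjugation relation between the cobrackets attached to two symplectic expansions $\theta'=e^u\circ\theta$ with $u(H)\subset\widehat{T}_2$; so it is enough to identify $\delta^\theta_{(-2)}$ for one convenient $\theta$.) This is the ``expected'' part, essentially the dual of the computation of the symbol of the Goldman bracket.

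The remaining point --- that the total-length $m-1$ and $m$ parts of $\delta^\theta(N(X_1\cdots X_m))$ vanish --- is where the real work lies, and I expect it to be the main obstacle. These parts come from double points whose two resolved pieces are almost trivial in the filtration; because $[\alpha]=0$ for $m\ge2$, such pieces carry opposite homology classes, so the naive leading contribution of each such $p$ is killed by the antisymmetrisation ($u\otimes(-u)-(-u)\otimes u=0$), and one must show that the next corrections cancel after summing over all double points of all loops representing $\alpha$. I would prove this by a careful bookkeeping of the degree-$\le1$ Taylor expansion of $\theta$ near each double point, using the full defining properties of a symplectic expansion --- group-likeness of $\theta$, so that $\log\theta$ is primitive and the first correction to a self-splice is symmetric in its two outputs and hence dies under antisymmetrisation, together with the boundary normalisation $\theta(\zeta)=e^{\omega}$. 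This cancellation is the content shared by Massuyeau--Turaev \cite{MT2} and Kuno--the author \cite{KK4}.
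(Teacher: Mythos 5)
The paper does not actually write out a proof of Theorem \ref{laurent}: it only records that the result ``follows from some computation based on a tensorial description of the homotopy intersection form by Massuyeau and Turaev \cite{MT1}'' and attributes the proof to \cite{MT2} and \cite{KK4}. So your proposal can only be measured against that indicated method. Two things in your plan are right and worth keeping: the reduction of the whole theorem to the single congruence $\delta^\theta(N(X_1\cdots X_m))\equiv\delta^{\rm alg}(N(X_1\cdots X_m))$ modulo total length $\ge m+1$ (correct, since $\delta^{\rm alg}$ is homogeneous of degree $-2$), and the remark that once vanishing in degrees $\le-3$ is known, the degree $-2$ component is automatically independent of $\theta$ because $\delta^{\theta'}=e^{\operatorname{ad}u}\delta^\theta$ with $u(H)\subset\widehat{T}_2$, so it suffices to identify it for one convenient expansion.

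The gap lies in both substantive steps, and it is not merely a matter of detail. Your degree count for $p\le-3$ and your identification of $\delta^\theta_{(-2)}$ argue double point by double point on individual loops representing $\alpha$; but $\alpha\in\widehat{\mathbb{Q}\hat\pi}(m)$ is only an inverse limit of $\mathbb{Q}$-combinations of loops, each individual loop contributes terms of arbitrarily low degree, and what is needed is a filtration estimate for the whole combination --- precisely what the tensorial description of the homotopy intersection form in \cite{MT1} supplies, by giving an exact algebraic formula for the self-intersection operation from which the Laurent components can be read off degree by degree. ``The intersection pairing has degree $2$'' is the correct heuristic, not a proof. More seriously, the vanishing of $\delta^\theta_{(-1)}$ and $\delta^\theta_{(0)}$ --- which you correctly single out as the core of the theorem --- is not established: the proposed mechanism (the two pieces of a near-trivial splitting ``carry opposite homology classes'' and die under antisymmetrisation) is not a per-double-point phenomenon, since a single representing loop $x$ need not satisfy $[x]=0$ even when $[\alpha]=0$, so the cancellation can only occur globally across the combination representing $\alpha$; and the ``careful bookkeeping'' that would finish the argument is explicitly deferred to the very references the theorem cites. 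As written, the proposal reproduces the expected shape of the proof but does not close it at the one point where all the work lies.
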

Theorem \ref{laurent} follows from some computation 
based on a tensorial description 
of the homotopy intersection form by Massuyeau and Turaev \cite{MT1}.
In the computation we introduce the Laurent expansion of the comodule 
structure map $\mu^\theta$ in a similar way. The principal term is 
$\mu^{\rm alg}: \widehat{T} \to \widehat{T}\widehat{\otimes}\mathfrak{a}^-_g$
defined by 
$$
\mu^{\rm alg}(X_1\cdots X_m) := 
\sum_{1\leq i<j \leq m}(X_i\cdot X_j)X_1\cdots X_{i-1}X_{j+1}\cdots X_m\widehat{\otimes}N(X_{i+1}\cdots X_{j-1})
$$
for $X_i \in H$. The pair 
$(\widehat{T}, \mu^{\rm alg})$ is a complete involutive 
$(\mathfrak{a}^-_g, \delta^{\rm alg})$-bimodule.
So we present the following problem.
\begin{problem}
Find a fatgraph interpretation of the limit of the relative 
twisted homology $\lim_{g\to\infty} H_k(a^-_g, 
\mathfrak{sp}_{2g}(\mathbb{Q}); \widehat{T})$ and the coboundary operator 
$d(\delta^{\rm alg}, \mu^{\rm alg})$ on it. 
\end{problem}
 \par
As for the first term $\delta^\theta_{(1)}$ of the Laurent expansion of 
$\delta^\theta$, the following holds. 
\begin{prop}[\cite{KK5}]\label{first}
There exist symplectic expansions $\theta$ and $\theta'$ such that 
$\delta^\theta_{(1)} = 0$ and $\delta^{\theta'}_{(1)} \neq 0$.
\end{prop}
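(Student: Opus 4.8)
The plan is to control how the first non-trivial Laurent coefficient $\delta^\theta_{(1)}$ depends on the symplectic expansion $\theta$, and then to exhibit one expansion where it vanishes together with a nearby one where it does not.

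First I would write down the transformation law. By the paragraph before Theorem \ref{isom}, any two symplectic expansions are related by $\theta'=e^u\circ\theta$ for some $u\in\Der_\omega(\widehat T)=\mathfrak{a}^-_g$ with $u(H)\subset\widehat T_2$. Since $e^u$ acts on $\widehat T$ as an automorphism whose effect on $\mathfrak{a}^-_g=N(\widehat T_1)$, transported through $-N\theta$, is the adjoint action $e^{\ad u}$, Theorem \ref{isom} gives $\delta^{\theta'}=(e^{\ad u}\widehat\otimes e^{\ad u})\circ\delta^\theta\circ e^{-\ad u}$. Equivalently, by the computation in the proof of Lemma \ref{A4Aexp}, $\delta^{\theta'}-\delta^\theta$ is the $1$-coboundary $Z\mapsto\sigma(Z)(\beta)$ with $\beta=\bigl(\sum_{k\geq1}\frac1{k!}\sigma(u)^{k-1}\bigr)(\delta^\theta u)$. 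Now I would expand everything along the grading of $\mathfrak{a}^-_g$ and isolate the component lying in the degree of $\delta^\theta_{(1)}$. By Theorem \ref{laurent} the only Laurent coefficient of $\delta^\theta$ in lower degree is $\delta^\theta_{(-2)}=\delta^{\mathrm{alg}}$, which is independent of $\theta$, while $\delta^\theta_{(-1)}=\delta^\theta_{(0)}=0$; since $\ad u$ and $\sigma(u)$ strictly raise degree, a short bookkeeping shows the only contributions come from $\delta^\theta_{(1)}$ itself (at zeroth order in $u$) and from $\delta^{\mathrm{alg}}(u)$. One thus obtains
$$\delta^{\theta'}_{(1)}=\delta^\theta_{(1)}+\mathcal C(u),\qquad\mathcal C(u):Z\longmapsto\sigma(Z)\bigl(\delta^{\mathrm{alg}}(u)+\cdots\bigr),$$
a universal expression in $u$ and $\delta^{\mathrm{alg}}$ which, when $u$ is taken homogeneous of the single degree for which $\ad u$ meets $\delta^{\mathrm{alg}}$ at first order, reduces to the one term $Z\mapsto\sigma(Z)(\delta^{\mathrm{alg}}(u))$.

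For the existence of a $\theta$ with $\delta^\theta_{(1)}=0$ I would either invoke a known ``good'' symplectic expansion — one coming from a Drinfel'd associator, for which the transported Turaev cobracket is homogeneous through this degree — or argue directly: starting from any $\theta_0$, the cocycle identity satisfied by the Lie bialgebra structure $\delta^{\theta_0}$ forces $\delta^{\theta_0}_{(1)}$ to be a cocycle in the relevant low-degree piece of $Z^1(\mathfrak{a}^-_g;\mathcal N(\mathfrak{a}^-_g))$, and the formula above shows its class can be modified by anything in the image of $u\mapsto(Z\mapsto\sigma(Z)(\delta^{\mathrm{alg}}(u)))$; vanishing of the corresponding cohomology in this degree — a finite explicit check, the degree being small — lets one kill $\delta^{\theta_0}_{(1)}$ by a suitable change $u$, producing the desired $\theta$.

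For the existence of $\theta'$ with $\delta^{\theta'}_{(1)}\neq0$ I would start from the $\theta$ just obtained and take $\theta'=e^u\circ\theta$ for a well-chosen homogeneous $u$ (an admissible change of expansion, i.e. a symplectic coderivation with $u(H)\subset\widehat T_2$, of the degree singled out above), so that $\delta^{\theta'}_{(1)}=\bigl(Z\mapsto\sigma(Z)(\delta^{\mathrm{alg}}(u))\bigr)$. It then remains to exhibit one such $u$ for which this cochain is non-zero, which I would do by direct computation with the explicit formula for $\delta^{\mathrm{alg}}$: evaluating $\delta^{\mathrm{alg}}(N(X_1\cdots X_m))$ on a short cyclic word and, since many short words produce complete cancellation, choosing one (possibly requiring a longer word, or $g\geq2$) for which a monomial survives, then applying a suitable $\sigma(Z)$. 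The main obstacle is exactly this twofold computational core: confirming that $u\mapsto(Z\mapsto\sigma(Z)(\delta^{\mathrm{alg}}(u)))$ is not identically zero in the relevant degree — so that $\delta^\theta_{(1)}$ is genuinely not rigid, as the Proposition asserts — and, for the first half, controlling enough of the low-degree cohomology to guarantee that a vanishing representative exists. Both reduce to finite computations with $\delta^{\mathrm{alg}}$, but they are the delicate steps; everything else is the formal bookkeeping above.
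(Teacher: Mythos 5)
First, a point of comparison: the paper itself contains no proof of Proposition \ref{first} --- it is imported from \cite{KK5}, listed as ``in preparation'' --- so there is no in-text argument to measure your proposal against, and I can only assess it on its own merits. Your formal reduction is correct and consistent with the machinery of \S 3: writing $\theta'=e^u\circ\theta$ with $u\in\Der_\omega(\widehat{T})$ an admissible change of expansion (the coderivation condition plus $u(H)\subset\widehat{T}_2$), using $Ne^u=e^{\ad u}N$ and the coboundary formula of Lemma \ref{A4Aexp}, and sorting by weight with the help of Theorem \ref{laurent}, one does obtain, for $u$ homogeneous of weight $5$, the variation $\delta^{\theta'}_{(1)}=\delta^{\theta}_{(1)}+\bigl(Z\mapsto\sigma(Z)(\delta^{\rm alg}(u))\bigr)$, with extra nonlinear terms only when $u$ has components of several weights. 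This is a correct normal form for how $\delta^\theta_{(1)}$ varies.

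It is not, however, a proof of the proposition, because the two facts you defer to ``finite computations'' are its entire content. (i) For the non-vanishing half you must produce an \emph{admissible} $u$ --- one satisfying the coderivation condition, not an arbitrary element of $N(H^{\otimes 5})$ --- with $\sigma(Z)(\delta^{\rm alg}(u))\neq 0$ for some $Z$. This is genuinely delicate, not a formality: the same bookkeeping applied to admissible $u$ of weights $3$ and $4$, combined with Theorem \ref{laurent}(1) (which gives $\delta^\theta_{(-1)}=\delta^\theta_{(0)}=0$ for \emph{every} $\theta$), forces $\sigma(\cdot)(\delta^{\rm alg}(u))=0$ for all such $u$ in those weights; so the coboundary map you rely on is known to die on the first two admissible degrees, and whether ``a monomial survives'' in weight $5$ is precisely the open computation, not a safe expectation. (ii) For the vanishing half, your first option (an expansion for which the transported cobracket is homogeneous through this degree) is not available in this framework, and your second option asserts that $\delta^{\theta_0}_{(1)}$ lies in the image of the constrained, nonlinear deformation map $u\mapsto\mathcal{C}(u)$; calling this ``vanishing of the corresponding cohomology'' does not identify a cohomology group whose vanishing is known, and neither the paper nor your argument supplies the required surjectivity. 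Until (i) and (ii) are actually carried out, you have shown how $\delta^\theta_{(1)}$ can move, but not that it attains both zero and a nonzero value.
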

In particular, 
$\delta^\theta$ and $\mu^\theta$ do depend on the choice of 
a symplectic expansion $\theta$, and the cobracket $\delta^\theta$
for some $\theta$ does not coincide with Schedler's cobracket $\delta^{\rm alg}$. 
But the cohomology classes of $\delta^\theta$ and $\mu^\theta$ 
do not depend on the choice of symplectic expansions 
from the following proposition. 
\begin{prop}
Let $\theta'$ be another symplectic expansion. Then we have 
\begin{eqnarray*}
&& d(\delta^{\theta'}) = d(\delta^{\theta}) \quad\mbox{on
$H_*(\mathfrak{a}_g^{-})$, and}\\
&& d(\delta^{\theta'}, \mu^{\theta'}) = d(\delta^{\theta}, \mu^{\theta})
\quad\mbox{on
$H_*(\mathfrak{a}_g^{-}; \widehat{T})$.}
\end{eqnarray*}
\end{prop}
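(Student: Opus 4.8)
The plan is to reduce both equalities to Lemmas \ref{A4Aexp} and \ref{A8Aexp} of \S3, by exhibiting $\delta^{\theta'}$ and $\mu^{\theta'}$ as conjugates of $\delta^{\theta}$ and $\mu^{\theta}$ by the exponential of a single symplectic derivation. First I would recall from \cite{KK1} that any two symplectic expansions $\theta,\theta'$ are related by $\theta'=e^{u}\circ\theta$ for some $u\in\Der_\omega(\widehat T)=\mathfrak{a}_g^-$ with $u(H)\subset\widehat T_2$, where $e^{u}$ is the Hopf automorphism of $\widehat T$ obtained by exponentiating the derivation $u$. The condition $u(H)\subset\widehat T_2$ forces $u$ to have positive degree as a derivation, so that $\ad(u)$ strictly raises the grading on $\mathfrak{a}_g^-$ and $\sigma(u)$ strictly raises the grading on $\widehat T$; hence $e^{\ad u}$ and $e^{\sigma(u)}$ converge as automorphisms of $\mathfrak{a}_g^-$ and of $\widehat T$, respectively. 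Moreover, since $\mathfrak{a}_g^-$ acts on the module $\widehat T$ through its tautological action by derivations, $\sigma(u)$ is precisely the derivation $u$, and therefore $e^{\sigma(u)}=e^{u}$ as operators on $\widehat T$. By Theorem \ref{isom} and the results of \cite{KK4} recalled above, $\delta^{\theta}$ is an involutive Lie bialgebra structure on $\mathfrak{a}_g^-$ and $\mu^{\theta}$ makes $\widehat T$ an involutive right $(\mathfrak{a}_g^-,\delta^{\theta})$-bimodule, so the hypotheses of Lemmas \ref{A4Aexp} and \ref{A8Aexp} are in place.

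The core step is the pair of conjugation identities
\begin{eqnarray*}
\delta^{\theta'} &=& (e^{\ad u}\widehat{\otimes}e^{\ad u})\circ\delta^{\theta}\circ e^{-\ad u},\\
\mu^{\theta'} &=& (e^{u}\widehat{\otimes}e^{\ad u})\circ\mu^{\theta}\circ e^{-u},
\end{eqnarray*}
that is, in the notation of \S3, $\delta^{\theta'}=e^{\ad u}\delta^{\theta}$ and $\mu^{\theta'}$ is the conjugate of $\mu^{\theta}$ by $U=e^{\ad u}$ on $\mathfrak{a}_g^-$ and $U^M=e^{u}=e^{\sigma(u)}$ on $\widehat T$. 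These follow from the definitions $\delta^{\theta}=((-N\theta)\widehat{\otimes}(-N\theta))\circ\delta\circ(-N\theta)^{-1}$ and $\mu^{\theta}=(\theta\widehat{\otimes}(-N\theta))\circ\mu\circ\theta^{-1}$ together with the two facts $\theta'\circ\theta^{-1}=e^{u}$ on $\widehat T$ and $(-N\theta')\circ(-N\theta)^{-1}=e^{\ad u}$ on $\mathfrak{a}_g^-$. The first is immediate from $\theta'=e^{u}\theta$; the second asserts that the automorphism of $\mathfrak{a}_g^-=N(\widehat T_1)$ induced by the Hopf automorphism $e^{u}$ of $\widehat T$ equals $e^{\ad u}$, equivalently that $N\circ e^{u}=e^{\ad u}\circ N$ on group-like elements. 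I expect this last point to be the main obstacle: it requires a careful comparison of how the cyclicizer $N$ intertwines the multiplicative action of $e^{u}$ on $\widehat T$ with the adjoint action of $e^{\ad u}$ on $\mathfrak{a}_g^-$, and it is essentially the place where the fine structure established in \cite{KK1} is used.

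Granting the identities, the conclusion is immediate. Applying Lemma \ref{A4Aexp} with $X=u\in\mathfrak{a}_g^-$ gives $d(\delta^{\theta'})=d(e^{\ad u}\delta^{\theta})=d(\delta^{\theta})$ on $H_*(\mathfrak{a}_g^-)$. For the bimodule statement, $e^{\ad u}$ on $\mathfrak{a}_g^-$ and $e^{u}=e^{\sigma(u)}$ on $\widehat T$ are compatible automorphisms, since $e^{u}$ conjugates derivations according to $e^{u}\,v\,e^{-u}=e^{\ad u}(v)$; hence Lemma \ref{A8Aexp} applied with $X=u$ yields $d(\delta^{\theta'},\mu^{\theta'})=d(e^{\ad u}\delta^{\theta},e^{\sigma(u)}\mu^{\theta})=d(\delta^{\theta},\mu^{\theta})$ on $H_*(\mathfrak{a}_g^-;\widehat T)$. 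Alternatively one may invoke Lemmas \ref{A4coboundary} and \ref{A8coboundary} directly, observing that $e^{\ad u}\delta^{\theta}-\delta^{\theta}$ and $e^{u}\mu^{\theta}-\mu^{\theta}$ are exactly the $1$-coboundaries produced in the proofs of Lemmas \ref{A4Aexp} and \ref{A8Aexp}.
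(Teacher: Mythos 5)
Your proposal is correct and follows essentially the same route as the paper: reduce to the fact that $\theta'=e^u\circ\theta$ for a symplectic derivation $u$ with $u(H)\subset\widehat{T}_2$, establish the intertwining identity $Ne^u=e^{\operatorname{ad}u}N$ (which the paper quotes as Lemma 4.3.1 of \cite{KK1}, exactly the point you flag as the main obstacle), deduce $\delta^{\theta'}=e^{\operatorname{ad}u}\delta^\theta$ and $\mu^{\theta'}=e^{\sigma(u)}\mu^\theta$, and conclude by Lemmas \ref{A4Aexp} and \ref{A8Aexp}.
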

\begin{proof} There exists an element of $u \in {\rm Der}_\omega(\widehat{T}) 
= \mathfrak{a}_g^-$ such that $(u\widehat{\otimes}u)\Delta = \Delta u$, 
$u(H) \subset \widehat{T}_2$ and $\theta' = e^u\circ\theta: 
\pi \to \widehat{T}$. 
From some straight-forward computation in \cite{KK1} Lemma 4.3.1, 
we have $Ne^u = e^{{\rm ad}u}N: \widehat{T} \to \mathfrak{a}^-_g$. 
Therefore $\delta^{\theta'} = (e^{{\rm ad}u}\widehat{\otimes}e^{{\rm ad}u})\delta^\theta e^{-{\rm ad}u} = 
e^{{\rm ad}u}\delta$ and $\mu^{\theta'} =
(e^u\widehat{\otimes}e^{{\rm ad}u})\mu^\theta e^{-u} = e^{\sigma(u)}\mu^\theta$
in the sense of Lemma \ref{A8Aexp}. 
In view of Lemmas \ref{A4Aexp} and \ref{A8Aexp}, 
this shows the proposition. 
\end{proof}
This proposition makes us to present the following problems. 
\begin{problem}\label{B1cohp}
Determine whether $\delta^\theta$ and $\mu^\theta$ are cohomologous to Schedler's $\delta^{\rm alg}$ and $\mu^{\rm alg}$, respectively, or not. 
\end{problem}
\begin{problem}
If the answer to Problem \ref{B1cohp} is affirmative, 
determine 
whether there exists a symplectic expansion $\theta$ such that $\delta^\theta$ and $\mu^\theta$ coincide with Schedler's $\delta^{\rm alg}$ and $\mu^{\rm alg}$, respectively, or not. 
\end{problem}

\appendix

\section{Operations of loops on a surface}

In the appendix we briefly review some operations of loops on a surface 
introduced in \cite{Go} \cite{T2} \cite{KK1} and \cite{KK4}. 

\subsection{Goldman bracket}

Let $S$ be an oriented surface. We denote by $\hat\pi(S)$
the homotopy set of free loops on the surface $S$. 
For any $p \in S$ we denote by $\vert\,\,\vert: \pi_1(S, p) \to \hat\pi(S)$ 
the forgetful map of the basepoint $p$.  
Let $\alpha$ and $\beta$ be elements of $\hat\pi(S)$.  
We choose their representatives in general position, and 
denote them by the same symbols. 
Then the set of intersection points $\alpha\cap\beta$ is finite, 
and $\alpha$ and $\beta$ intersect transversely at each point 
in $\alpha\cap\beta$. The Goldman bracket is defined to be 
the formal sum
$$
[\alpha, \beta] := \sum_{p \in \alpha\cap\beta} \varepsilon_p(\alpha, \beta)
\vert \alpha_p\beta_p\vert
$$
in $\mathbb{Z}\hat\pi(S)$, the $\mathbb{Z}$-free module over the set 
$\hat\pi(S) = [S^1, S]$. Here $\varepsilon_p(\alpha, \beta) \in \{\pm1\}$ 
is the local intersection number at $p$, and $\alpha_p$ (resp.\ $\beta_p$)
$\in \pi_1(S, p)$ is the based loop along $\alpha$ (resp.\ $\beta$) 
with basepoint $p$. Goldman \cite{Go} proved that the bracket is 
well-defined, namely, homotopy invariant, and that the pair 
$(\mathbb{Z}\hat\pi(S), [\,,\,])$ is a Lie algebra, which we call the 
Goldman Lie algebra of the surface $S$. \par
Assume that the boundary $\pa S$ is non-empty, and let $*$ 
be a point on the boundary $\pa S$. We denote by $\Pi S(p_0, p_1)$ 
the homotopy set of paths on $S$ from $p_0$ to $p_1 \in S$. 
Choose representatives of $\alpha \in \hat\pi(S)$ and $\gamma \in \pi_1(S, *)$ 
in general position. The formal sum
$$
\sigma(\alpha)(\gamma) := \sum_{p \in \alpha\cap\gamma}
\epsilon_p(\alpha, \gamma)\gamma_{*p}\alpha_p\gamma_{p*} \in 
\mathbb{Z}\pi_1(S, *)
$$
is well-defined, namely, homotopy invariant \cite{KK1}. 
Here $\gamma_{*p} \in \Pi S(*,p)$ (resp.\ $\gamma_{p*} \in \Pi S(p, *)$)
is (the homotopy class of) 
the restriction of $\gamma$ to the segment from $*$ to $p$ 
(resp.\ from $p$ to $*$). Moreover $\sigma$ defines a Lie algebra 
homomorphism $\sigma: \mathbb{Z}\hat\pi(S) \to
 {\rm Der}(\mathbb{Z}\pi_1(S, *))$ \cite{KK1}. 
 If $*_0$ and $*_1$ are two distinct points on $\pa S$, then 
 $\mathbb{Z}\Pi S(*_0, *_1)$, 
 the $\mathbb{Z}$-free module over the set $\Pi S(*_0, *_1)$, 
 has a similar $\mathbb{Z}\hat\pi(S)$-module structure \cite{KK3}. 
\par 
\subsection{Turaev cobracket}
Let $S$ be a connected oriented surface. The constant loop $1 \in \hat\pi(S)$ 
on the surface is in the center of the Goldman Lie algebra 
$\mathbb{Z}\hat\pi(S)$, so that the quotient $\mathbb{Z}\hat\pi'(S) := 
\mathbb{Z}\hat\pi(S)/\mathbb{Z}1$ has a natural Lie algebra structure. 
We denote by $\vert\,\,\vert': \mathbb{Z}\pi_1(S, p) \to  \mathbb{Z}\hat\pi'(S)$ 
the composite of the forgetful map of the base point $p \in S$ and 
the quotient map $\mathbb{Z}\hat\pi(S)\to\mathbb{Z}\hat\pi'(S)$. 
Choose a representative of $\alpha \in \hat\pi(S)$ in general position, and 
denote it by the same symbol. Then the set $D_\alpha := \{(t_1, t_2) 
\in S^1\times S^1; \, t_1\neq t_2, \, \alpha(t_1)= \alpha(t_2)\}$ is finite 
and $\alpha$ intersects itself transversely at each $\alpha(t_1)= \alpha(t_2)$. 
The Turaev cobracket is defined to be the formal sum
$$
\delta(\alpha):=\sum_{(t_1,t_2)\in D_\alpha}
\varepsilon(\dot{\alpha}(t_1),\dot{\alpha}(t_2)) \vert\alpha_{t_1t_2}\vert' \otimes
|\alpha_{t_2t_1}|^{\prime}
$$
in $\mathbb{Z}\hat{\pi}'(S)
\otimes \mathbb{Z}\hat{\pi}'(S)$. Here $\varepsilon(\dot{\alpha}(t_1),\dot{\alpha}(t_2))
\in \{\pm1\}$ 
is the local intersection number of the velocity vectors $\dot{\alpha}(t_1)$ 
and $\dot{\alpha}(t_2) \in T_{\alpha(t_1)}S$, and $\alpha_{t_1t_2}$ 
(resp.\ $\alpha_{t_2t_1}$) $\in \pi_1(S, \alpha(t_1))$ is (the homotopy class) 
of the restriction of $\alpha$ to the interval $[t_1,t_2]$ (resp.\ $[t_2,t_1]$). 
Turaev \cite{T2} proved that the cobracket $\delta$ is well-defined, namely, 
homotopy invariant, and that the pair $(\mathbb{Z}\hat{\pi}'(S), \delta)$ 
is a Lie bialgebra. Later Chas \cite{Chas} proved that it satisfies the 
involutivity. \par
Assume that the boundary $\pa S$ is non-empty, and let $*$ 
be a point on the boundary $\pa S$. 
The homomorphism $\sigma$ stated above factors through 
the quotient $\mathbb{Z}\hat\pi'(S)$. 
Choose a representative of $\gamma \in \pi_1(S, *)$ such that it is
a smooth immersion whose singularities are at most ordinary double points, 
the image of the interior $]0,1[$ is included in the interior of $S$, and 
the velocity vectors at the endpoints $0$ and $1$ are linearly independent on 
the tangent space $T_*S$. We denote it by the same symbol $\gamma$. 
Then the set $\Gamma_\gamma$ of self-intersection points of $\gamma$ 
except $*$ is finite. 
For $p \in \Gamma_p$, we denote $\gamma^{-1}(p) = \{t^p_1, t^p_2\}$ so that 
$t^p_1 < t^p_2$. 
Inspired by Turaev \cite{T1}, Kuno and the author \cite{KK4} introduced 
the formal sum
$$
\mu(\gamma) := 
\begin{cases}
-\sum_{p \in \Gamma_\gamma}\varepsilon(\dot{\gamma}(t^p_1), 
\dot{\gamma}(t^p_2))(\gamma_{0t^p_1}\gamma_{t^p_21})\otimes
\vert\gamma_{t^p_1t^p_2}\vert', & \text{if $\varepsilon(\dot{\gamma}(0), 
\dot{\gamma}(1)) = +1$},\\
1\otimes\vert\gamma\vert' -\sum_{p \in \Gamma_\gamma}\varepsilon(\dot{\gamma}(t^p_1), 
\dot{\gamma}(t^p_2))(\gamma_{0t^p_1}\gamma_{t^p_21})\otimes
\vert\gamma_{t^p_1t^p_2}\vert', & \text{if $\varepsilon(\dot{\gamma}(0), 
\dot{\gamma}(1)) = -1$}
\end{cases}
$$
in $\mathbb{Z}\pi_1(S, *)\otimes\mathbb{Z}\hat\pi'(S)$. 
Here $\gamma_{\tau_0\tau_1} \in \Pi S(\gamma(\tau_0), \gamma(\tau_1))$ 
is (the homotopy class of) the restriction of $\gamma$ to the interval $[\tau_0, 
\tau_1] \subset [0,1]$ for $0 \leq \tau_0 \leq \tau_1\leq 1$. 
They proved that the map $\mu$ is well-defined, namely, homotopy invariant, 
and that the pair $(\mathbb{Z}\pi_1(S, *), \mu)$ is an involutive 
$\mathbb{Z}\hat\pi'(S)$-bimodule \cite{KK4}. 
 If $*_0$ and $*_1$ are two distinct points on $\pa S$, then 
 $\mathbb{Z}\Pi S(*_0, *_1)$ 
 has a similar involutive $\mathbb{Z}\hat\pi'(S)$-bimodule structure \cite{KK4}. 
\par

\par
\bigskip

\noindent \textsc{Nariya Kawazumi\\
Department of Mathematical Sciences,\\
University of Tokyo,\\
3-8-1 Komaba Meguro-ku Tokyo 153-8914 JAPAN}\\
\noindent \texttt{E-mail address: kawazumi@ms.u-tokyo.ac.jp}

\end{document}